
\documentclass[a4paper]{amsart}                

\addtolength{\textwidth}{2cm} \addtolength{\hoffset}{-1cm}
\addtolength{\textheight}{2cm} \addtolength{\voffset}{-1cm}

\usepackage[latin1]{inputenc}                  
\usepackage[T1]{fontenc}                       
\usepackage[spanish,english]{babel}            
\usepackage{graphicx}                  
\usepackage{amsmath,amssymb,amsthm}            
\usepackage{latexsym}                          
\usepackage{delarray}                          
\usepackage{bbm}                               
\usepackage{hyperref}                          
\usepackage{calrsfs,eufrak,mathrsfs,upgreek}   
\usepackage{bbding,trfsigns}                   
\usepackage{wasysym}                           
\usepackage{datetime}                          

\DeclareMathAlphabet{\mathpzc}{OT1}{pzc}{m}{it} 



\newtheorem{Th}{Theorem}[section]              

\newtheorem{Rm}{Remark}[section]

\newtheorem{Prop}{Proposition}[section]
\newtheorem{Lema}{Lemma}[section]

\title[Harmonic analysis operators in the Schr\"odinger setting]{$L^p$-boundedness properties of variation operators in the Schr\"odinger setting}
\author{J.J. Betancor}
\address{Departamento de Análisis Matemático\\
Universidad de la Laguna\\
Campus de Anchieta, Avda. Astrofísico Francisco Sánchez, s/n\\
38271 La Laguna (Sta. Cruz de Tenerife), Spain}
\email{jbetanco@ull.es}

\author{J.C. Fari\~na}
\address{Departamento de Análisis Matemático\\
Universidad de la Laguna\\
Campus de Anchieta, Avda. Astrofísico Francisco Sánchez, s/n\\
38271 La Laguna (Sta. Cruz de Tenerife), Spain}
\email{jcfarina@ull.es}

\author{E. Harboure}
\address{Instituto de Matemática Aplicada del Litoral, IMAL\\
Universidad Nacional del Litoral\\
C/ G\"emes\\
Santa Fe, Argentina}
\email{harboure@santafe-conicet.gov.ar}

\author{L. Rodr\'{\i}guez-Mesa}
\address{Departamento de Análisis Matemático\\
Universidad de la Laguna\\
Campus de Anchieta, Avda. Astrofísico Francisco Sánchez, s/n\\
38271 La Laguna (Sta. Cruz de Tenerife), Spain}
\email{lrguez@ull.es}

\thanks{This paper is partially supported by MTM2007/65609.}

\begin{document}

  \maketitle                                  

  \begin{abstract}
    In this paper we establish the $L^p$-boundedness properties of the variation operators associated with the heat semigroup, Riesz transforms and commutator between Riesz transforms and multiplication by $BMO(\mathbb{R}^n)$-functions in the Schr\"odinger setting.
  \end{abstract}

  \section{Introduction and main results}

  We consider the time independent Schr\"odinger operator on $\mathbb{R}^n$, $n\ge 3$, defined by
  $$
  \mathcal{L}=-\Delta+V,
  $$
  where the potential $V$ is nonzero, nonnegative and belongs, for some $q\geq n/2$, to the reverse H\"older class $B_q$, that is, there exists $C>0$ such that
  $$
  \Big(\frac{1}{|B|}\int_B V(x)^qdx\Big)^{1/q}\le \frac{C}{|B|}\int_B V(x)dx,
  $$
  for every ball $B$ in $\mathbb{R}^n$. Since any nonnegative polynomial belongs to $B_q$ for every  $1<q<\infty$, the Hermite operator $-\Delta+|x|^2$ falls under our considerations.

  Harmonic analysis associated with the operator $\mathcal{L}$ has been studied by several authors in the last decade. Most of them had, as starting point, the paper of Shen \cite{Sh}. This author investigated $L^p$-boundedness properties of the Riesz transforms formally defined in the $\mathcal{L}$-setting by
  \begin{equation}\label{R1}
  R^\mathcal{L}=\nabla \mathcal{L}^{-1/2},
  \end{equation}
  where as usual $\nabla$ denotes the gradient operator and the negative square root $\mathcal{L}^{-1/2}$ of $\mathcal{L}$ is given by the functional calculus as follows
  \begin{equation}\label{R2}
  \mathcal{L}^{-1/2}f(x)=\int_{\mathbb{R}^n}K(x,y)f(y)dy,
  \end{equation}
  being
  $$
  K(x,y)=-\frac{1}{2\pi}\int_\mathbb{R}
(-i\tau)^{-1/2}\Gamma(x,y,\tau)d\tau.
$$
Here, for every $\tau\in \mathbb{R}$, $\Gamma(x,y,\tau)$, $x,y\in \mathbb{R}^n$, represents the fundamental solution for the operator $\mathcal{L}+i\tau$.

Bongioanni, Harboure and Salinas \cite{BHS3} studied the behavior in $L^p$ spaces of the commutator operator $[R^\mathcal{L},b]$ defined by
$$
[R^\mathcal{L},b]f=bR^\mathcal{L}(f)-R^\mathcal{L}(bf),
$$
where $b$ is in an appropriate class containing the space $BMO$ of bounded mean oscillation functions.

The heat semigroup $\{W_t^\mathcal{L}\}_{t>0}$ generated by the operator $-\mathcal{L}$ can be written on $L^2(\mathbb{R}^n)$ as the following integral operator
$$
W_t^\mathcal{L}(f)(x)=\int_{\mathbb{R}^n} W_t^\mathcal{L}(x,y)f(y)dy,\,\,\,f\in L^2(\mathbb{R}^n).
$$
The semigroup $\{W_t^\mathcal{L}\}_{t>0}$ is $C_0$ in $L^p(\mathbb{R}^n)$, $1\le p<\infty$, but it is not Markovian. The main properties of the kernel $W_t^\mathcal{L}(x,y)$, $x,y\in \mathbb{R}^n$, $t>0$, can be encountered in \cite{DGMTZ}.

Other operators associated with the Schr\"odinger operator $\mathcal{L}$ have been studied on $L^p(\mathbb{R}^n)$ and on other kind of function spaces also in \cite{BHS1}, \cite{BHS2}, \cite{DGMTZ}, \cite{DZ}, \cite{GLP}, \cite{TD}, and \cite{Z}.

The variation operators were introduced in ergodic theory
(\cite{JKRW}) in order to measure the speed of convergence. Suppose
that $\{T_t\}_{t>0}$ is an uniparametric system of linear operators
bounded in $L^p(\mathbb{R}^n)$, for some $1\le p<\infty$,  such that
$ \lim_{t\to 0^+}T_tf $ exists in $L^p(\mathbb{R}^n)$.

If $\rho>2$, the variation operator $V_\rho(T_t)$ is given by
$$
V_\rho(T_t)(f)(x)=\sup_{\{t_j\}_{j\in \mathbb{N}}\searrow 0}\Big(\sum_{j=0}^\infty |T_{t_j}f(x)-T_{t_{j+1}}f(x)|^\rho\Big)^{1/\rho},\,\,\,f\in L^p(\mathbb{R}^n).
$$
Here the supremum is taken over all the real decreasing sequences $\{t_j\}_{j\in \mathbb{N}}$ that converge to zero. By $E_\rho$ we denote the space that includes all the functions $w:(0,\infty)\longrightarrow \mathbb{R}$, such that
$$
\|w\|_{E_\rho}= \sup_{\{t_j\}_{j\in \mathbb{N}}\searrow 0}\Big(\sum_{j=0}^\infty |w(t_j)-w(t_{j+1})|^\rho\Big)^{1/\rho}<\infty.
$$
$\|\cdot\|_{E_\rho}$ is a seminorm on $E_\rho$. It can be written
$$
V_\rho(T_t)(f)=\|T_tf\|_{E_\rho}.
$$

Variation operators for $C_0$-semigroups of operators and singular
integrals have been analyzed on $L^p$-spaces by Campbell et al.
(\cite{CJRW1}, \cite{CJRW2} and \cite{JKRW}). More recently, in
\cite{CMMTV} and \cite{HMTV} the authors have studied variation
operators on $L^p$-spaces for semigroups and Riesz transforms in the
Ornstein-Uhlenbeck and Hermite settings.

As it was mentioned, for every $1\le p<\infty$, the semigroup $\{W_t^\mathcal{L}\}_{t>0}$ is $C_0$ in $L^p(\mathbb{R}^n)$, that is, for every $f\in L^p(\mathbb{R}^n)$, $W_t^\mathcal{L}(f)\to f$, as $t\to 0^+$, in $L^p(\mathbb{R}^n)$. The $L^p$-boundedness properties of the oscillation and variation operators for $\{W_t^\mathcal{L}\}_{t>0}$ are established in the following.

\begin{Th}\label{LVarLp} Let $\rho > 2$ . Then,
the variation operator $V_\rho(W_t^{\mathcal{L}})$ is bounded from
$L^p(\mathbb R^n)$ into itself, for every $1<p<\infty$, and from
$L^1(\mathbb R^n)$ into $L^{1,\infty}(\mathbb R^n)$.
\end{Th}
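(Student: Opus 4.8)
The plan is to compare $\{W_t^{\mathcal L}\}_{t>0}$ with the classical heat semigroup $\{W_t\}_{t>0}=\{e^{t\Delta}\}_{t>0}$ and to treat the difference as a harmless perturbation. Since $\|\cdot\|_{E_\rho}$ is a seminorm, for every $f$ and almost every $x\in\mathbb R^n$,
\[
V_\rho(W_t^{\mathcal L})f(x)=\|W_t^{\mathcal L}f(x)\|_{E_\rho}\le\|W_tf(x)\|_{E_\rho}+\|(W_t^{\mathcal L}-W_t)f(x)\|_{E_\rho}=V_\rho(W_t)f(x)+V_\rho(D_t)f(x),
\]
where $D_t:=W_t^{\mathcal L}-W_t$. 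The $L^p$-boundedness for $1<p<\infty$ and the weak type $(1,1)$ of $V_\rho(W_t)$, $\rho>2$, are known (\cite{CJRW2}, \cite{JKRW}); notice that $\rho>2$ is already indispensable here. Hence everything reduces to showing that $V_\rho(D_t)$ is bounded on $L^p(\mathbb R^n)$, $1<p<\infty$, and from $L^1(\mathbb R^n)$ into $L^{1,\infty}(\mathbb R^n)$. (One could equally run the argument below directly on $\{W_t^{\mathcal L}\}$, but isolating $D_t$ keeps the $L^2$ input as transparent as possible.)

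I would first settle the $L^2$ estimate. Since $\mathcal L=-\Delta+V$ with $V\ge0$ is nonnegative and self-adjoint on $L^2(\mathbb R^n)$, the semigroup $\{W_t^{\mathcal L}\}_{t>0}$ is a bounded analytic symmetric contraction semigroup, so the known $\rho$-variation inequality for such semigroups, valid precisely for $\rho>2$, gives $\|V_\rho(W_t^{\mathcal L})f\|_2\le C\|f\|_2$; the same holds for $\{W_t\}$, and subtracting through the splitting above yields $\|V_\rho(D_t)f\|_2\le C\|f\|_2$. Next I would record the kernel estimates. Writing $D_t(x,y)$ for the integral kernel of $D_t$, the Gaussian bounds, the Hölder regularity and the $t$-derivative bounds for $W_t^{\mathcal L}(x,y)$ (\cite{DGMTZ}), together with the elementary analogues for $W_t(x,y)$, show that for some $\delta\in(0,1]$,
\[
|D_t(x,y)|+t\,|\partial_t D_t(x,y)|\le C\,t^{-n/2}e^{-c|x-y|^2/t},\qquad |\partial_t D_t(x,y)-\partial_t D_t(x,y_0)|\le C\,\frac{|y-y_0|^{\delta}}{t^{n/2+1+\delta/2}}\,e^{-c|x-y|^2/t}
\]
whenever $|y-y_0|\le\sqrt t$ (and the symmetric estimate in $x$). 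Moreover Duhamel's identity $W_t^{\mathcal L}=W_t-\int_0^tW_{t-s}VW_s^{\mathcal L}\,ds$ combined with Shen's estimates for the critical radius \cite{Sh} even yields an extra gain $(\sqrt t/\varrho(x))^{\delta}$ for $\sqrt t\le\varrho(x)$, where $\varrho$ is the critical radius, although this refinement is not needed for the present theorem.

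With these ingredients I would check that the $E_\rho$-valued operator $f\mapsto\big(x\mapsto(t\mapsto D_tf(x))\big)$ is a Calderón--Zygmund operator. Using $\|w\|_{E_\rho}\le\int_0^\infty|w'(t)|\,dt$ and the estimates above, for $|x-y|>2|y-y_0|$,
\[
\|D_t(x,y)-D_t(x,y_0)\|_{E_\rho}\le\int_0^\infty\big|\partial_tD_t(x,y)-\partial_tD_t(x,y_0)\big|\,dt\le C\,\frac{|y-y_0|^{\delta}}{|x-y|^{n+\delta}},
\]
and likewise with $x$ and $y$ interchanged, so that the Hörmander integral conditions hold in each variable. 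The vector-valued Calderón--Zygmund theorem then gives the weak type $(1,1)$ bound and, interpolating with the $L^2$ estimate, $L^p$-boundedness for $1<p\le2$; for $2<p<\infty$ the second (dual) Hörmander condition yields a Fefferman--Stein sharp maximal estimate $\big(V_\rho(D_t)f\big)^{\#}(x)\lesssim Mf(x)$ (or a good-$\lambda$ inequality), whence $L^p$-boundedness in that range. Note that the $E_\rho$-valued kernel enters only off the diagonal, where the Gaussian factor tames the $t\to0^+$ singularity, so the fact that $\int_0^\infty|\partial_tD_t(x,x)|\,dt=+\infty$ is irrelevant; it is the $L^2$ square-function/long-variation input, not the kernel, that forces $\rho>2$.

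The main obstacle is precisely this $L^2$ ingredient: proving, or correctly quoting, the variational inequality $\|V_\rho(W_t^{\mathcal L})f\|_2\le C\|f\|_2$ for $\rho>2$, which if carried out by hand requires combining an $L^2$ square-function bound for $t\,\partial_tW_t^{\mathcal L}$ with a Lépingle-type estimate for the long (dyadic) variation. Once that is in place, the remainder is a routine, if lengthy, Calderón--Zygmund verification with Gaussian kernels.
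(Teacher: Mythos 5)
Your approach is genuinely different from the paper's, and it has a gap precisely at the step you flag as the ``main obstacle.'' You split globally, $V_\rho(W_t^{\mathcal L})\le V_\rho(W_t)+V_\rho(D_t)$ with $D_t=W_t^{\mathcal L}-W_t$, and then treat $D_t$ as an $E_\rho$-valued Calder\'on--Zygmund operator. The kernel estimates you write down for $D_t$ do hold and do give the Hörmander conditions you need, so the $L^p$ and weak $(1,1)$ parts are indeed routine \emph{once an $L^2$ bound is in hand}. But to produce that $L^2$ bound you invoke ``the known $\rho$-variation inequality'' for symmetric contraction (or analytic) semigroups applied to $W_t^{\mathcal L}$. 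That is exactly what the paper warns against: it states explicitly that, since $\{W_t^{\mathcal L}\}_{t>0}$ is not Markovian, no part of Theorem~\ref{LVarLp} follows from \cite[Theorem 3.3]{JR}, which is the standard Stein/Rota route to the $L^2$ variation inequality for semigroups. Your class ``bounded analytic symmetric contraction semigroup'' is not the class for which that theorem is proved, and dilating a sub-Markovian semigroup to a Markovian one (adding a cemetery state) does not obviously preserve self-adjointness on $L^2$. So the $L^2$ input you treat as quotable is, at minimum, not available from the references cited in the paper, and filling it in is the whole content of the theorem rather than a preliminary.

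The paper's proof is organised specifically to sidestep this. Instead of comparing $W_t^{\mathcal L}$ with $W_t$ globally, it splits $V_\rho(W_t^{\mathcal L})$ into three pieces via the local/global regions determined by the critical radius $\gamma$: the global tail $V_\rho(W_t^{\mathcal L}-W_{t,\mathrm{loc}}^{\mathcal L})$ and the local difference $V_\rho(W_{t,\mathrm{loc}}^{\mathcal L}-W_{t,\mathrm{loc}})$ are bounded pointwise by $C\mathcal M(f)$ using the decay in $(1+t/\gamma(x)^2)^{-N}$ and the smallness $(\sqrt t/\gamma)^\delta$ from the perturbation formula (that gain, which you declare ``not needed,'' is exactly what makes the local difference integrable against $t$); the remaining piece $V_\rho(W_{t,\mathrm{loc}})$ is then bounded by $V_\rho(W_t)$ plus a Cotlar-type maximal operator $T^*$ for the $E_\rho$-valued kernel $W_t(x,y)$. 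The $L^2$ ingredient that enters is the $L^2$ boundedness of $V_\rho(W_t)$ for the \emph{classical} heat semigroup, which is Markovian and therefore covered by \cite[Theorem 3.3]{JR}. This is the point: the paper never needs an $L^2$ variation bound for $W_t^{\mathcal L}$ itself, only for $W_t$, and the local/global machinery converts everything else into positive operators dominated by the Hardy--Littlewood maximal function. Your CZ comparison is a clean idea and would work if you had, say, the later Le~Merdy--Xu strong variation inequality for analytic semigroups; but as a self-contained argument at the level of the references used here it leaves the $L^2$ estimate unproved.
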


Note that, since $\{W_t^\mathcal{L}\}_{t>0}$ is not Markovian, none part of Theorem \ref{LVarLp} can be deduced from \cite[Theorem 3.3]{JR}.

According to standard ideas, Shen in \cite{Sh} actually defined
(although he did not write it in this way), for every $\ell
=1,\cdots,n$, the $\ell $-th Riesz transform in the
$\mathcal{L}$-context by
\begin{equation}\label{R3}
R^\mathcal{L}_\ell (f)(x)=\lim_{\varepsilon\to 0^+}\int_{|x-y|>\varepsilon}R^\mathcal{L}_\ell (x,y)f(y)dy,\,\,\,\mbox{a.e.}\,\,\,x\in \mathbb{R}^n,
\end{equation}
provided that $f\in L^p(\mathbb{R}^n)$ and either

(i) $1\le p<\infty$ and $V\in B_n$; or

(ii) $1\le p<p_0$, $\frac{1}{p_0}=\frac{1}{q}-\frac{1}{n}$, and $V\in B_q$, $n/2\le q<n$.

Here, for every $x,y\in \mathbb{R}^n$, $x\neq y$,
$$
R^\mathcal{L}_\ell (x,y)=-\frac{1}{2\pi}\int_{\mathbb{R}}(-i\tau)^{-1/2}\frac{\partial}{\partial x_\ell }\Gamma(x,y,\tau)d\tau.
$$
In the sequel we complete Shen's result proving that actually
the limit in (\ref{R3}) exists and the Riesz transform
$R^\mathcal{L}=(R_1^\mathcal{L},\cdots,R_n^\mathcal{L})$ can be
represented by (\ref{R1}) on $C_c^\infty(\mathbb{R}^n)$, the space
of $C^\infty$-functions in $\mathbb{R}^n$ that have compact support.

\begin{Prop} \label{Pvriesz} Let $\ell =1,\cdots,n$. Suppose that one of the following two conditions holds:

(i) $f\in L^p(\mathbb{R}^n)$, $1\le p<\infty$, and $V\in B_n$;

(ii) $f\in L^p(\mathbb{R}^n)$, $1\le p<p_0$, where $\frac{1}{p_0}=\frac{1}{q}-\frac{1}{n}$, and $V\in B_q$, $n/2\le q<n$.

Then, there exists the following limit
$$
\lim_{\varepsilon\to 0^+}\int_{|x-y|>\varepsilon}R^\mathcal{L}_\ell (x,y)f(y)dy,\,\,\,\mbox{a.e.}\,\,\,x\in \mathbb{R}^n.
$$
Moreover, if $f\in C_c^\infty(\mathbb{R}^n)$, $\mathcal{L}^{-1/2}f$, as defined in (\ref{R2}), admits partial derivative with respect to $x_\ell$ almost everywhere in $\mathbb{R}^n$ and
\begin{equation}\label{repre}
\frac{\partial}{\partial x_\ell}\mathcal{L}^{-1/2}f(x)=\lim_{\varepsilon\to 0^+}\int_{|x-y|>\varepsilon}R^\mathcal{L}_\ell (x,y)f(y)dy,\,\,\,\mbox{a.e.}\,\,\,x\in \mathbb{R}^n.
\end{equation}
\end{Prop}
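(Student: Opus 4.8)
The plan is to split the analysis into two parts: first establish the existence of the principal value limit, and then identify it with the distributional derivative of $\mathcal L^{-1/2}f$ when $f\in C_c^\infty(\mathbb R^n)$. For the existence of the limit, I would exploit Shen's estimates on the kernel $R^\mathcal L_\ell(x,y)$ from \cite{Sh}, which give, under hypothesis (i) or (ii), both a size bound of Calder\'on--Zygmund type away from the diagonal and, crucially, a smoothness estimate in the $y$ variable together with a comparison to the classical Riesz kernel $c_n(x_\ell-y_\ell)/|x-y|^{n}$. The standard device is to write
$$
\int_{|x-y|>\varepsilon}R^\mathcal L_\ell(x,y)f(y)\,dy
=\int_{|x-y|>\varepsilon}\big(R^\mathcal L_\ell(x,y)-\mathcal R_\ell(x,y)\big)f(y)\,dy
+\int_{|x-y|>\varepsilon}\mathcal R_\ell(x,y)f(y)\,dy,
$$
where $\mathcal R_\ell$ is the Euclidean Riesz kernel. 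The second term converges a.e.\ as $\varepsilon\to 0^+$ by the classical theory. For the first term, the difference kernel $D_\ell(x,y)=R^\mathcal L_\ell(x,y)-\mathcal R_\ell(x,y)$ satisfies an improved bound of the form $|D_\ell(x,y)|\le C|x-y|^{-n}(|x-y|/\rho(x))^{\delta}$ for $|x-y|<\rho(x)$, where $\rho$ is the critical radius function associated with $V$, plus fast decay for $|x-y|>\rho(x)$; this local integrability near the diagonal makes the limit over $|x-y|>\varepsilon$ exist (in fact absolutely) for $f$ in the stated $L^p$ classes, after controlling the global part by H\"older's inequality and the $L^p\to L^{p_0}$ mapping implicit in Shen's range.

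For the second assertion, fix $f\in C_c^\infty(\mathbb R^n)$. Here I would argue that $\mathcal L^{-1/2}f$ is given by the absolutely convergent integral \eqref{R2} with kernel $K(x,y)$, and that one may differentiate under the integral sign in $x_\ell$ on the region $|x-y|>\varepsilon$ for each fixed $\varepsilon>0$, since there the kernel $\partial_{x_\ell}K(x,y)=R^\mathcal L_\ell(x,y)$ is smooth and the integral is uniformly convergent locally in $x$. Thus $\partial_{x_\ell}\big(\int_{|x-y|>\varepsilon}K(x,y)f(y)\,dy\big)=\int_{|x-y|>\varepsilon}R^\mathcal L_\ell(x,y)f(y)\,dy$. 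It then remains to pass to the limit $\varepsilon\to 0^+$: the right-hand side converges a.e.\ by the first part, and one must check that the truncated potentials $\int_{|x-y|>\varepsilon}K(x,y)f(y)\,dy$ converge to $\mathcal L^{-1/2}f(x)$ uniformly on compacta (which follows from the local integrability of $K(x,y)$ near the diagonal, again via Shen-type bounds comparing $K$ to the Newtonian kernel) and that the a.e.\ convergence of the derivatives upgrades to convergence in $\mathcal D'$; combining these identifies the a.e.\ limit with $\partial_{x_\ell}\mathcal L^{-1/2}f$. A clean way to package the last step is to test against $\varphi\in C_c^\infty$, integrate by parts, and use dominated convergence on both sides.

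The main obstacle I anticipate is the uniform control near the diagonal — both proving the absolute convergence of $\int_{|x-y|>\varepsilon}R^\mathcal L_\ell(x,y)f(y)\,dy$ as $\varepsilon\to 0$ and justifying the interchange of limit and derivative. This rests entirely on having the right quantitative kernel estimates: a H\"older-type modulus of continuity for $R^\mathcal L_\ell(x,y)$ (or equivalently for the difference $D_\ell$) that is integrable in $y$ near $x$, uniformly for $x$ in a compact set. Shen's work provides the size estimates and the subordination formula gives $R^\mathcal L_\ell(x,y)$ in terms of $\partial_{x_\ell}\Gamma(x,y,\tau)$; the technical heart is extracting from these the smoothness estimate and the comparison with the classical kernel with a gain of a power of $|x-y|/\rho(x)$. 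Once those estimates are in hand, everything else is a routine application of Calder\'on--Zygmund theory together with Fubini and dominated convergence.
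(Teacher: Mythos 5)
Your overall plan (split off the Euclidean Riesz kernel, use the classical theory for that piece, and control the difference by Shen-type estimates) matches the spirit of the paper, but there are two genuine gaps.

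\textbf{Reduction to a dense subclass and the case $p=1$.} The paper does not try to prove existence of the principal value for general $f\in L^p$ directly from kernel estimates; instead it first proves (Proposition \ref{maxRiesz}) that the maximal truncation operator $R_\ell^{\mathcal{L},*}$ is bounded on $L^p$, $1<p<p_0$, and of weak type $(1,1)$, and then invokes the standard ``maximal-operator $+$ density'' argument: once (\ref{repre}) holds for $\phi\in C_c^\infty$, the a.e.\ existence of the limit for every $f$ in the stated $L^p$ classes follows. Your alternative, which argues absolute convergence of the difference part and H\"older control of the global part, is plausible for $p>1$ but does not obviously cover $p=1$, since there the global piece is not controlled by a simple H\"older estimate. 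You should either prove (or cite) the maximal inequality, or address $p=1$ separately.

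\textbf{Distributional versus classical derivative.} For the representation formula, your ``clean way to package the last step'' — test against $\varphi\in C_c^\infty$, integrate by parts, and pass to the limit — would at best establish that the \emph{distributional} derivative of $\mathcal L^{-1/2}\phi$ agrees with the a.e.\ limit, viewed as an $L^1_{\rm loc}$ function. The proposition asserts something stronger, namely that $\mathcal L^{-1/2}\phi$ admits a \emph{classical} partial derivative a.e.\ which equals that limit. Passing from ``the distributional derivative is an $L^1_{\rm loc}$ function'' to ``the classical derivative exists a.e.\ and equals it'' is not automatic; the paper explicitly invokes a theorem of Schwartz (\cite[\S5, Theorem V, part (2)]{Schw}) to make this step, after first proving the absolute convergence estimates (\ref{Abs}), (\ref{PI2}) and showing that all boundary terms $I(\varepsilon)$ arising from the one-variable integration by parts tend to zero. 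Those boundary terms are the main technical work of the proof (estimates (\ref{PI8}), (\ref{PI9}) near the diagonal, split into the ranges $|\overline x-\overline y|<\eta$ and $>\eta$, with uniformity in $\varepsilon$), and your sketch does not engage with them; simply asserting ``dominated convergence on both sides'' elides precisely the part that requires the quantitative H\"older-in-$x_1$ bounds for $\Gamma-\Gamma_0$.
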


%
%
%
%
%
%
%

For every $\varepsilon>0$, the $\varepsilon$-truncation $R^{\mathcal{L},\varepsilon}_\ell $ of $R^\mathcal{L}_\ell $ is defined as usual by
$$
R^{\mathcal{L},\varepsilon}_\ell (f)(x)=\int_{|x-y|>\varepsilon}R^\mathcal{L}_\ell (x,y)f(y)dy,\,\,\,\ell =1,\cdots,n.
$$

The behavior on $L^p$ spaces of the variation operators associated
with the family of truncations $\{R^{\mathcal{L},\varepsilon}_\ell
\}_{\varepsilon>0}$, $\ell =1,\cdots,n$, is contained in the following result.

\begin{Th} \label{VarRiesz} Let $\ell =1,\cdots,n$. Assume that $\rho>2$ and that $V\in B_q$, with $q\ge n/2$, and $1<p<p_0$, where $\frac{1}{p_0}=\Big(\frac{1}{q}-\frac{1}{n}\Big)_+$. Then, the variation operator $V_\rho(R^{\mathcal{L},\varepsilon}_\ell
)$ is bounded from $L^p(\mathbb{R}^n)$ into itself. Moreover,
$V_\rho(R^{\mathcal{L},\varepsilon}_\ell)$ is bounded from
$L^1(\mathbb{R}^n)$ into $L^{1,\infty}(\mathbb{R}^n)$.

%
%
\end{Th}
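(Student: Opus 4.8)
The strategy is to decompose the variation operator for the truncated Riesz transforms into a "local" part, which we compare to the classical (Euclidean) Riesz transform truncations, and a "global" part, which we control by size estimates alone. Recall from Shen's work that the kernel $R^\mathcal{L}_\ell(x,y)$ satisfies Calder\'on--Zygmund type estimates on the relevant scale, but with an extra decay in terms of the critical radius function $\gamma(x)$ associated with $V$ (the function $\gamma$ defined by $\frac{1}{\gamma(x)}=\sup\{r>0:\frac{1}{r^{n-2}}\int_{B(x,r)}V\le 1\}$, in Shen's notation). Concretely, one has $|R^\mathcal{L}_\ell(x,y)|\le C_N|x-y|^{-n}(1+|x-y|/\gamma(x))^{-N}$ for every $N$, together with a smoothness estimate in the $y$ variable of the same flavor valid for $|x-y|$ not too large compared with $\gamma(x)$. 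The plan is to write $R^\mathcal{L}_\ell(x,y)=\big(R^\mathcal{L}_\ell(x,y)-c_\ell R_\ell(x,y)\big)+c_\ell R_\ell(x,y)$, where $R_\ell(x,y)=c_n(x_\ell-y_\ell)/|x-y|^{n+1}$ is the classical Riesz kernel, and to analyze separately the variation of the truncations of each piece.

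For the difference kernel $D_\ell(x,y):=R^\mathcal{L}_\ell(x,y)-c_\ell R_\ell(x,y)$, the point is that it is, up to the decay factors in $\gamma$, \emph{integrable in $y$}; more precisely, Shen's estimates give $\int_{\mathbb{R}^n}|D_\ell(x,y)|\,dy\le C$ uniformly in $x$, and in fact a stronger local estimate. Consequently, the full operator with kernel $D_\ell$ is bounded on every $L^p$, $1\le p\le\infty$ (and from $L^1$ into $L^1$), and the associated variation operator $V_\rho$ can be dominated pointwise by the maximal operator $\sup_{\varepsilon>0}\int_{|x-y|>\varepsilon}|D_\ell(x,y)||f(y)|\,dy\le C\,\mathcal{M}f(x)$ plus, if one wants, a short-variation term handled by $\int|D_\ell(x,y)||f(y)|\,dy$ itself; here one uses the elementary bound $V_\rho(S^\varepsilon)(f)\le 2\sup_\varepsilon|S^\varepsilon f|$ valid whenever $\rho\ge 1$ for a family of truncations of an absolutely integrable kernel. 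So this piece is bounded on $L^p$, $1<p<\infty$, and of weak type $(1,1)$, for \emph{all} the admissible $p$, with no restriction beyond $V\in B_q$, $q\ge n/2$.

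For the principal part $c_\ell R_\ell(x,y)$, the truncations $R_\ell^\varepsilon$ are exactly the classical truncated Riesz transforms, and the $L^p$-boundedness ($1<p<\infty$) and weak type $(1,1)$ of $V_\rho(R_\ell^\varepsilon)$ for $\rho>2$ is precisely the theorem of Campbell, Jones, Reinhold and Wainger in \cite{CJRW2} (variation for Calder\'on--Zygmund singular integrals); I would cite it directly. Adding the two pieces, we get that $V_\rho(R^{\mathcal{L},\varepsilon}_\ell)$ is bounded on $L^p(\mathbb{R}^n)$ for $1<p<\infty$ and from $L^1$ into $L^{1,\infty}$, \emph{provided} we already know the limit defining $R^\mathcal{L}_\ell$ exists a.e.\ (so that the truncations are meaningful on a dense class and the variation sum telescopes correctly), which is exactly the content of Proposition \ref{Pvriesz} under hypotheses (i)--(ii); the restriction $1<p<p_0$ in the statement is inherited from there, and enters only through $R_\ell^{\mathcal{L}}$ being well defined and $L^p$-bounded in the first place, not through the variational estimate itself.

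The main obstacle I anticipate is \emph{verifying that Shen's kernel estimates are strong enough to give the uniform integrability $\int|D_\ell(x,y)|\,dy\le C$, not merely a pointwise Calder\'on--Zygmund bound}. For $V\in B_q$ with $q\ge n$ this comparison is essentially in \cite{Sh}; for $n/2\le q<n$ one only has the weaker decay, the gradient bound on $\Gamma$ degrades, and one must be more careful near the diagonal and exploit the $B_q$ condition (via the estimate $\int_{B(x,r)}V(y)|x-y|^{2-n}\,dy\lesssim (r/\gamma(x))^{2-n/q}$ or similar) to recover enough integrability; this is the place where the argument is genuinely technical and where the hypothesis $1<p<p_0$ again becomes relevant. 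A secondary, purely bookkeeping, point is to justify the pointwise domination of $V_\rho$ of the "good" part by a maximal function rigorously, i.e.\ to check that for an integrable-kernel family of truncations the $\rho$-variation is controlled, for every $\rho\ge 1$, by twice the associated maximal truncation — this is elementary but should be stated as a lemma and used uniformly.
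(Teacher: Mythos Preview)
Your decomposition has a genuine gap. The claim that $D_\ell(x,y)=R^\mathcal{L}_\ell(x,y)-c_\ell R_\ell(x,y)$ satisfies $\int_{\mathbb{R}^n}|D_\ell(x,y)|\,dy\le C$ uniformly in $x$ is false, and not for a subtle reason: for $|x-y|\gg\gamma(x)$ the Schr\"odinger kernel decays rapidly (this is Shen's estimate), but the classical Riesz kernel $R_\ell(x-y)$ is still of exact size $|x-y|^{-n}$ with no improvement. Hence $|D_\ell(x,y)|\sim |x-y|^{-n}$ in the far region and $\int_{|x-y|>\gamma(x)}|D_\ell(x,y)|\,dy=+\infty$. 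No amount of care with the $B_q$ condition repairs this; the failure is on the classical side, not the Schr\"odinger side. Your ``secondary bookkeeping'' lemma $V_\rho(S^\varepsilon)f\le 2\sup_\varepsilon|S^\varepsilon f|$ is also false as stated (a signed integrable kernel can make $\varepsilon\mapsto S^\varepsilon f(x)$ oscillate so that its variation far exceeds its sup); the correct elementary bound is $V_\rho(S^\varepsilon)f(x)\le V_1(S^\varepsilon)f(x)\le \int|K(x,y)||f(y)|\,dy$, obtained by dominating $\ell^\rho$ by $\ell^1$ on the increments.

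The paper's proof uses the same comparison idea but with the splitting that actually works: restrict the difference $R^\mathcal{L}_\ell-R_\ell$ to the \emph{local} region $|x-y|<\gamma(x)$, where Shen's estimates (\ref{L2}), (\ref{L4}) give genuine extra smallness and the resulting positive operator $\tau_1(|f|)(x)=\int_{|x-y|<\gamma(x)}|R^\mathcal{L}_\ell-R_\ell|\,|f|\,dy$ is bounded; in the \emph{global} region $|x-y|>\gamma(x)$ discard $R_\ell$ entirely and keep only $R^\mathcal{L}_\ell$, whose rapid decay (\ref{L1}), (\ref{L3}) makes $\tau_2(|f|)(x)=\int_{|x-y|>\gamma(x)}|R^\mathcal{L}_\ell|\,|f|\,dy$ bounded as well. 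The classical variation $V_\rho(R_\ell^\varepsilon)$ then handles the remaining local piece via \cite{CJRW2}, exactly as you proposed. One more correction: the restriction $p<p_0$ is \emph{not} merely a well-definedness artifact. When $n/2\le q<n$, the kernel bounds (\ref{L3}), (\ref{L4}) carry the extra term $\int_{B(x,|x-y|/4)}V(z)|z-x|^{1-n}\,dz$, and the $L^p$-boundedness of $\tau_1,\tau_2$ (proved by duality in Proposition~\ref{maxRiesz}) genuinely uses $p<p_0$; the variational estimate inherits this restriction through those operators, not just through $R^\mathcal{L}_\ell$ being defined.
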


By $BMO(\mathbb{R}^n)$ we denote the usual John-Nirenberg space. A locally integrable function $b$ on $\mathbb{R}^n$ is in $BMO(\mathbb{R}^n)$ if and only if there exists $C>0$ such that
$$
\frac{1}{|B|}\int_B|b(x)-b_B|dx\le C,
$$
for every ball $B$ in $\mathbb{R}^n$. Here
$b_B=\frac{1}{|B|}\int_Bb(x)dx$, where $B$ is a ball in
$\mathbb{R}^n$. For $f\in BMO(\mathbb{R}^n)$ we define
$$
\|f\|_{BMO(\mathbb{R}^n)}=\sup_{B}\frac{1}{|B|}\int_B|b(x)-b_B|dx,
$$
where the supremum is taken over all the balls $B$ in
$\mathbb{R}^n$.

For every $V\in B_{n/2}$, we consider the function $\gamma$ defined
by
$$
\gamma(x)=\sup\Big\{r>0:\frac{1}{r^{n-2}}\int_{B(x,r)}V(y)dy\le 1\Big\},\,\,\,x\in \mathbb{R}^n.
$$
Under our assumptions it is not hard to see that
$0<\gamma(x)<\infty$, for all $x\in \mathbb{R}^n$. This function
$\gamma$ was introduced in \cite{Sh0} when the potential $V$
satisfies that
$$
\max_{x\in B}V(x)\le C\frac{1}{|B|}\int_B V(y)dy,
$$
for every ball $B$ in $\mathbb{R}^n$, to study the Neumann problem
for the operator $\mathcal{L}$ in the region above a Lipschitz
graph. The main properties  of $\gamma$ were showed in \cite[Section
1]{Sh} (see also \cite{Sh0}). Here, the function $\gamma$ plays a
crucial role.

In \cite{BHS3} the space $BMO_\theta(\gamma)$, $\theta\ge 0$, was
defined as follows. Let $\theta\ge 0$. A locally integrable function
$b$ in $\mathbb{R}^n$ is in $BMO_\theta(\gamma)$ provided that
$$
\frac{1}{|B(x,r)|}\int_{B(x,r)}|b(y)-b_{B(x,r)}|dy\le
C\Big(1+\frac{r}{\gamma(x)}\Big)^\theta,
$$
for all $x\in \mathbb{R}^n$ and $r>0$. We denote for $b\in
BMO_\theta(\gamma)$
$$
\|b\|_{BMO_\theta(\gamma)}=\sup_{x\in \mathbb{R}^n, \,\,r>0}
\frac{1}{|B(x,r)|}\int_{B(x,r)}|b(y)-b_{B(x,r)}|dy\Big(1+\frac{r}{\gamma(x)}\Big)^{-\theta}.
$$
Note that $BMO(\mathbb{R}^n)=BMO_0(\gamma)\subset
BMO_\theta(\gamma)\subset BMO_{\theta'}(\gamma)$, when $0\le
\theta\le \theta'$. We set $BMO_\infty (\gamma)=\bigcup_{\theta>0}
BMO_\theta(\gamma)$. As it is pointed out in \cite{BHS3},
$BMO_\infty(\gamma)$ is in general larger than $BMO(\mathbb{R}^n)$.

For $b\in BMO_\infty(\gamma)$ and $\ell =1,\cdots,n$, the commutator
operator $C_{b,\ell }^\mathcal{L}$ is defined by
$$
C_{b,\ell }^\mathcal{L}(f)=bR^\mathcal{L}_\ell (f)-R^\mathcal{L}_\ell (bf),\,\,\,f\in C_c^\infty(\mathbb{R}^n).
$$
Note that $bf\in L^1(\mathbb{R}^n)$, for every $f\in C_c^\infty(\mathbb{R}^n)$ and $b\in BMO_\infty(\gamma)$.

In \cite[Theorem 1]{BHS3} it was shown that, for every $b\in
BMO_\infty(\gamma)$ and $\ell =1,\cdots,n$, the operator $C_{b,\ell
}^\mathcal{L}$ is bounded from $L^p(\mathbb{R}^n)$ into itself,
provided that $1<p<p_0$, where
$\frac{1}{p_0}=\Big(\frac{1}{q}-\frac{1}{n}\Big)_+$ and $V\in B_q$,
$q\ge n/2$.

In the next result we obtain a pointwise representation of the
commutator operator by a principal value integral.

\begin{Prop} \label{PvComm} Let $\ell =1,\cdots,n$. If $b\in BMO_\infty(\gamma)$, $V\in B_q$, with $q\ge n/2$, and $f\in L^p(\mathbb{R}^n)$, where $1<p<p_0$ and $\frac{1}{p_0}=\Big(\frac{1}{q}-\frac{1}{n}\Big)_+$, then
$$
C_{b,\ell }^\mathcal{L}(f)(x)=\lim_{\varepsilon\to 0^+}\int_{|x-y|>\varepsilon}(b(x)-b(y))R^\mathcal{L}_\ell (x,y)f(y)dy,\,\,\,\mbox{a.e.}\,\,x\in \mathbb{R}^n.
$$
\end{Prop}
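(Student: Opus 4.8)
The plan is to establish the principal-value formula first for compactly supported $f$, directly from Proposition \ref{Pvriesz}, and then to pass to general $f\in L^p(\mathbb R^n)$ by a truncation argument that exploits the $\gamma$-localization of the kernel $R^\mathcal{L}_\ell(x,y)$ proved by Shen in \cite{Sh}.

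\emph{Step 1: $f$ with compact support.} Let $\operatorname{supp}f\subset B(0,R)$ and fix $\theta\ge0$ with $b\in BMO_\theta(\gamma)$. Since $\gamma$ is bounded below on bounded sets (see \cite{Sh}), the $BMO_\theta(\gamma)$ condition restricted to balls inside a fixed ball is just the classical $BMO$ condition there; hence by John--Nirenberg $b\in L^r(B(0,R))$ for every $r<\infty$, and by H\"older $bf\in L^s(\mathbb R^n)$ for some $1<s<p$ (so $s<p_0$). Applying Proposition \ref{Pvriesz} to $f$ and to $bf$,
$$
\lim_{\varepsilon\to0^+}R^{\mathcal{L},\varepsilon}_\ell f(x)=R^\mathcal{L}_\ell f(x),\qquad
\lim_{\varepsilon\to0^+}R^{\mathcal{L},\varepsilon}_\ell (bf)(x)=R^\mathcal{L}_\ell(bf)(x),\qquad\text{a.e. }x.
$$
For $|x-y|>\varepsilon$ Shen's estimates give $R^\mathcal{L}_\ell(x,\cdot)\chi_{\{|x-\cdot|>\varepsilon\}}\in L^{p'}(\mathbb R^n)\cap L^{s'}(\mathbb R^n)$, so the principal-value integral splits and, for a.e. $x$,
$$
\int_{|x-y|>\varepsilon}(b(x)-b(y))R^\mathcal{L}_\ell(x,y)f(y)\,dy
=b(x)R^{\mathcal{L},\varepsilon}_\ell f(x)-R^{\mathcal{L},\varepsilon}_\ell(bf)(x)
\longrightarrow b(x)R^\mathcal{L}_\ell f(x)-R^\mathcal{L}_\ell(bf)(x)
$$
as $\varepsilon\to0^+$. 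That the right-hand side equals $C^\mathcal{L}_{b,\ell}(f)(x)$ follows by approximating $f$ in $L^p$ by $\phi_k\in C_c^\infty(\mathbb R^n)$ supported in a fixed ball: then $b\phi_k\to bf$ in $L^s$, and using the $L^p$- and $L^s$-boundedness of $R^\mathcal{L}_\ell$ together with the $L^p$-boundedness of $C^\mathcal{L}_{b,\ell}$ (\cite[Theorem 1]{BHS3}) and passing to a subsequence, one gets $C^\mathcal{L}_{b,\ell}(f)=bR^\mathcal{L}_\ell f-R^\mathcal{L}_\ell(bf)$ a.e.

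\emph{Step 2: general $f\in L^p(\mathbb R^n)$.} Write $f=f^R+g^R$ with $f^R=f\chi_{B(0,R)}$, $g^R=f-f^R$, and fix $x$ outside a null set. For $R>2|x|$ the support of $g^R$ lies in $\{|x-y|>R/2\}$, and there the factor $(1+|x-y|/\gamma(x))^{-N}$ in Shen's bound for $R^\mathcal{L}_\ell(x,y)$ (valid for every $N$, up to an additional fractional-integral term in $V$ that is harmless in this range) gives arbitrarily fast decay. Combining this with the at-most-polynomial growth of functions in $BMO_\theta(\gamma)$, a dyadic/lattice covering of $\{|y|>R\}$ by unit balls on which $b-b_{B}$ is controlled by local John--Nirenberg, and H\"older's inequality, one obtains
$$
\sup_{\varepsilon>0}\int_{|x-y|>\varepsilon}|b(x)-b(y)|\,|R^\mathcal{L}_\ell(x,y)|\,|g^R(y)|\,dy\le\eta_x(R),\qquad \eta_x(R)\to0\ \text{as }R\to\infty.
$$
By Step 1, $\lim_{\varepsilon\to0^+}\int_{|x-y|>\varepsilon}(b(x)-b(y))R^\mathcal{L}_\ell(x,y)f^R(y)\,dy=C^\mathcal{L}_{b,\ell}(f^R)(x)$ a.e.; hence for a.e. $x$ and every $R>2|x|$,
$$
\limsup_{\varepsilon\to0^+}\Big|\int_{|x-y|>\varepsilon}(b(x)-b(y))R^\mathcal{L}_\ell(x,y)f(y)\,dy-C^\mathcal{L}_{b,\ell}(f^R)(x)\Big|\le\eta_x(R),
$$
so the principal value exists a.e. Since $C^\mathcal{L}_{b,\ell}(f^R)\to C^\mathcal{L}_{b,\ell}(f)$ in $L^p$ as $R\to\infty$, along a subsequence $R_k\to\infty$ it converges a.e.; taking $R=R_k$ above shows the principal value equals $C^\mathcal{L}_{b,\ell}(f)(x)$ a.e.

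\emph{Main obstacle.} The delicate point is the uniform-in-$\varepsilon$ tail bound in Step 2: one must balance the growth of $b\in BMO_\theta(\gamma)$ (polynomial in $1+|y|/\gamma(y)$, up to logarithms) and the singular $V$-term in Shen's kernel estimate against the localization factor $(1+|x-y|/\gamma(x))^{-N}$, choosing $N$ large, while working with $b$ only in an averaged (John--Nirenberg) sense rather than pointwise. A minor additional point is verifying in Step 1 that the $L^p$-bounded extension of $C^\mathcal{L}_{b,\ell}$ agrees with $bR^\mathcal{L}_\ell(\cdot)-R^\mathcal{L}_\ell(b\,\cdot\,)$ on compactly supported functions, which requires the boundedness of $R^\mathcal{L}_\ell$ on an auxiliary space $L^s$ with $1<s<p$.
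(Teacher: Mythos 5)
Your approach is genuinely different in structure from the paper's. The paper proves Proposition~\ref{PvComm} by establishing the $L^p$-boundedness of the maximal truncated commutator $C_{b,\ell}^{\mathcal{L},*}$ (via the decomposition into $T_1$, $T_2$, and the local maximal commutator $C_{b,\ell}^{*,\mathrm{loc}}$, the last handled by Proposition~\ref{master}), and then invokes the standard Calder\'on density argument together with the $C_c^\infty$-case of Proposition~\ref{Pvriesz}. You instead verify the principal-value identity directly for compactly supported $f$ (Step~1) and then attempt a support-truncation argument (Step~2). Step~1 is essentially sound and is an economical substitute for the paper's local analysis ($T_1$ and $C_{b,\ell}^{*,\mathrm{loc}}$), since it leverages Proposition~\ref{Pvriesz} and the known $L^p$-boundedness of $R^{\mathcal{L}}_\ell$ and $C^{\mathcal{L}}_{b,\ell}$ on an auxiliary $L^s$ scale.

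Step~2, however, has a genuine gap exactly where you flag the ``main obstacle.'' The uniform tail bound $\eta_x(R)\to0$ a.e.\ is asserted but not proved, and proving it amounts to showing that
$$
\int_{|x-y|>\gamma(x)}|b(x)-b(y)|\,|R^{\mathcal{L}}_\ell(x,y)|\,|f(y)|\,dy<\infty\quad\text{for a.e.\ }x,
$$
which is the a.e.\ finiteness of the paper's operator $T_2|f|$ --- precisely the quantity whose $L^{p}$-boundedness (via its adjoint $T_2^*$, estimates (\ref{Co19})--(\ref{Co21})) occupies the bulk of the paper's proof. Your sketch (``dyadic/lattice covering, local John--Nirenberg, H\"older'') names the right ingredients but does not carry them out, and there are nontrivial pitfalls: the ``at-most-polynomial growth of $BMO_\theta(\gamma)$ functions'' is only an averaged statement, not pointwise, so one must really run the dyadic/John--Nirenberg argument as the paper does with \cite[Proposition~3]{BHS3}; for $n/2\le q<n$ the decay factor in the kernel estimate (\ref{L3}) is $(1+|x-y|/\gamma(y))^{-k}$ rather than $(1+|x-y|/\gamma(x))^{-k}$, so passing to decay for fixed $x$ requires the comparison of $\gamma(y)$ with $\gamma(x)$; and the $V$-fractional-integral term in the kernel is \emph{not} ``harmless'' --- it must be balanced against the decay factor using the $I_1:L^q\to L^{p_0}$ bound and the doubling of $V$, as in the paper's treatment of $T_{2,2}^*$. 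In short, the route is different in form but does not avoid the substantive work: the decisive estimate is not established, and filling it in would reproduce the core of the paper's $T_2$ analysis.
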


For every $b\in BMO_\infty(\gamma)$, $\varepsilon>0$, and $\ell =1,\cdots,n$, we define the $\varepsilon$-truncation $C_{b,\ell }^{\mathcal{L},\varepsilon}$ of $C_{b,\ell }^\mathcal{L}$ by
$$
C_{b,\ell }^{\mathcal{L},\varepsilon}(f)(x)=\int_{|x-y|>\varepsilon}(b(x)-b(y))R^\mathcal{L}_\ell (x,y)f(y)dy,\quad x\in\mathbb{R}^n.
$$
The $L^p$-boundedness properties of the variation operators
associated with the family of truncations $\{C_{b,\ell
}^{\mathcal{L},\varepsilon}\}_{\varepsilon>0}$ are contained in the
following.

\begin{Th} \label{VarComm} Let $\ell =1,\cdots,n$ and $b\in BMO_\infty(\gamma)$. Assume that $V\in B_q$, with $q\ge n/2$, and $1<p<p_0$, where $\frac{1}{p_0}=\Big(\frac{1}{q}-\frac{1}{n}\Big)_+$. Then, if $\rho>2$, the variation operator $V_\rho(C_{b,\ell }^{\mathcal{L},\varepsilon})$
is bounded from $L^p(\mathbb{R}^n)$ into itself.
\end{Th}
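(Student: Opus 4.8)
The plan is to split the kernel of $C_{b,\ell}^{\mathcal L,\varepsilon}$ into a \emph{global} and a \emph{local} part by means of the critical radius function $\gamma$, as in \cite{Sh} and \cite{BHS3}, and to treat each part separately. Throughout I would use the kernel estimates of \cite{Sh}: a size estimate for $R^{\mathcal L}_\ell(x,y)$ with rapid decay in $|x-y|/\gamma(x)$ (of the form $C_N|x-y|^{-n}(1+|x-y|/\gamma(x))^{-N}$ when $V\in B_n$, and with an extra potential-dependent component when $q<n$), a H\"ormander-type smoothness estimate with the same decay, and the comparison of $R^{\mathcal L}_\ell(x,y)$ with the Euclidean Riesz kernel $R_\ell(x,y)=c_n(x_\ell-y_\ell)|x-y|^{-n-1}$, the difference being less singular on $|x-y|<\gamma(x)$ (gaining a factor $(|x-y|/\gamma(x))^{\delta}$ when $V\in B_n$, and otherwise controlled by a potential fractional kernel such as $|x-y|^{1-n}\int_{|x-z|<|x-y|}V(z)|z-y|^{1-n}\,dz$); together with the standard $BMO_\theta(\gamma)$ estimates for $b$ on dilated balls from \cite{BHS3}. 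Fixing $\eta\in C^\infty[0,\infty)$ with $\eta=1$ on $[0,1]$ and $\operatorname{supp}\eta\subset[0,2)$ and setting $\eta_\gamma(x,y)=\eta(|x-y|/\gamma(x))$ (so that multiplication by $\eta_\gamma$ preserves Calder\'on--Zygmund smoothness, since $\gamma(x)\sim\gamma(y)$ on $\operatorname{supp}\eta_\gamma$), I would decompose
\[
(b(x)-b(y))R^{\mathcal L}_\ell(x,y)=K_g(x,y)+K_d(x,y)+K_e(x,y),
\]
with $K_g=(b(x)-b(y))R^{\mathcal L}_\ell(x,y)(1-\eta_\gamma(x,y))$, $K_d=(b(x)-b(y))(R^{\mathcal L}_\ell(x,y)-R_\ell(x,y))\eta_\gamma(x,y)$ and $K_e=(b(x)-b(y))R_\ell(x,y)\eta_\gamma(x,y)$. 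Denoting by $C^{\varepsilon}_g,C^{\varepsilon}_d,C^{\varepsilon}_e$ the associated $\varepsilon$-truncated operators, by subadditivity of $\|\cdot\|_{E_\rho}$ it suffices to bound $V_\rho(C^{\varepsilon}_g)$, $V_\rho(C^{\varepsilon}_d)$ and $V_\rho(C^{\varepsilon}_e)$ on $L^p(\mathbb R^n)$.

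The elementary remark for the first two pieces is that, for any family of truncations $T^\varepsilon f(x)=\int_{|x-y|>\varepsilon}K(x,y)f(y)\,dy$, consecutive increments are integrals over pairwise disjoint annuli, so $V_\rho(T^\varepsilon)f(x)\le\sum_j|T^{t_{j+1}}f(x)-T^{t_j}f(x)|\le\int_{\mathbb R^n}|K(x,y)||f(y)|\,dy$. For $C^{\varepsilon}_d$ the cut-off $\eta_\gamma$ confines the integration to $|x-y|<2\gamma(x)$, and there the estimates above for $R^{\mathcal L}_\ell-R_\ell$ give a locally integrable kernel; splitting $b(x)-b(y)$ against the average $b_{B(x,\gamma(x))}$ and summing over the shells $|x-y|\sim 2^{-k}\gamma(x)$, $\int|K_d(x,y)||f(y)|\,dy$ is dominated by a $BMO$-twisted fractional maximal operator adapted to $V$, which by the reverse H\"older property of $V\in B_q$ is bounded on $L^p(\mathbb R^n)$ precisely for $1<p<p_0$ (for $V\in B_n$ one only gets an ordinary Hardy--Littlewood-type maximal operator). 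This is the piece that forces the range $p<p_0$; the logarithmic loss produced by $b$ is absorbed by the John--Nirenberg inequality for $b$ on critical balls. For $C^{\varepsilon}_g$ one notes that, for $\varepsilon\le\gamma(x)$, $C^{\varepsilon}_gf(x)$ is independent of $\varepsilon$, so the $E_\rho$-seminorm reduces — up to a single jump bounded by $2\sup_{\varepsilon\ge\gamma(x)}|C^{\varepsilon}_gf(x)|$ — to the variation over $\varepsilon\ge\gamma(x)$; there the relevant kernel is supported in $|x-y|\ge\gamma(x)$, where the rapid decay makes it behave like $|x-y|^{-n-N}$, and the dyadic-shell plus $BMO_\theta(\gamma)$ analysis used to bound the global part of $C^{\mathcal L}_{b,\ell}$ in \cite[proof of Theorem 1]{BHS3} gives the $L^p$-bound for $V_\rho(C^{\varepsilon}_g)$, $1<p<\infty$.

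The decisive term is $V_\rho(C^{\varepsilon}_e)$, for which the Euclidean kernel has no decay and $b\notin BMO(\mathbb R^n)$ in general. Here I would localize through $\gamma$: pick a covering $\mathbb R^n=\bigcup_k B_k$, $B_k=B(x_k,\gamma(x_k))$, with $\{B_k\}$ and $\{2B_k\}$ of bounded overlap and $\gamma\sim\gamma(x_k)$ on $B_k$, and a partition of unity $\{\varphi_k\}$ subordinate to $\{2B_k\}$. For $x\in B_k$ the kernel $K_e(x,y)$ is supported in a fixed dilate $CB_k$, on which the hypothesis $b\in BMO_\theta(\gamma)$ forces $|CB_k|^{-1}\int_{CB_k}|b-b_{CB_k}|\le C\|b\|_{BMO_\theta(\gamma)}$; extending $b|_{CB_k}$ to $b_k\in BMO(\mathbb R^n)$ with $\|b_k\|_{BMO(\mathbb R^n)}\le C\|b\|_{BMO_\theta(\gamma)}$ (John--Nirenberg extension), one may replace $b$ by $b_k$ on $B_k$. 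The resulting operator differs from the truncated commutator $[b_k,R_\ell]^\varepsilon$ of the classical $\ell$-th Riesz transform only by the smooth factor $\eta_\gamma$, whose contribution is controlled again by the crude bound of the previous paragraph because $\eta_\gamma-1$ and $\nabla_y\eta_\gamma$ are supported where $|x-y|\sim\gamma(x)$. Thus, on each $B_k$, $V_\rho(C^{\varepsilon}_e)$ reduces to the $L^p$-boundedness of the $\rho$-variation of the truncated commutators of the Euclidean Riesz transforms with $BMO(\mathbb R^n)$ symbols, which follows from a Coifman--Rochberg--Weiss-type argument based on the $L^p$-boundedness of the $\rho$-variation of the truncated Euclidean Riesz transforms (\cite{CJRW1,CJRW2}) and the Calder\'on--Zygmund structure of $R_\ell$ (cf.\ the Hermite and Ornstein--Uhlenbeck variants in \cite{CMMTV,HMTV}); summing over $k$ by the bounded overlap of $\{2B_k\}$ then gives $V_\rho(C^{\varepsilon}_e)$ bounded on $L^p(\mathbb R^n)$, $1<p<\infty$. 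Combining the three estimates yields Theorem \ref{VarComm}, the pointwise representation of Proposition \ref{PvComm} guaranteeing in addition the $L^p$-convergence of the underlying truncations.

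I expect the main obstacle to be precisely this last reduction: establishing the $\rho$-variation estimate for the Euclidean Riesz commutators and transplanting it through the $\gamma$-localization without losing control of the constants. Because variation operators of commutators are not of weak type $(1,1)$, the argument for $C^{\varepsilon}_e$ must be carried out entirely within $L^p$ and the local pieces reassembled by bounded overlap, which is also the reason why — unlike in Theorems \ref{LVarLp} and \ref{VarRiesz} — no $L^1(\mathbb R^n)$--$L^{1,\infty}(\mathbb R^n)$ endpoint is asserted in Theorem \ref{VarComm}.
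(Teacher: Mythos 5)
Your proposal follows essentially the same route as the paper: split $(b(x)-b(y))R^{\mathcal L}_\ell(x,y)$ via the critical radius $\gamma$ into a global piece, a local difference piece, and a local Euclidean commutator piece; control the first two by the crude pointwise bound $V_\rho(T^\varepsilon)f\le\int|K(x,y)||f(y)|\,dy$ together with the kernel estimates of \cite{Sh} and \cite{BHS2} (the paper treats the difference and global operators, $T_1$ and $T_2$, by duality in the proof of Proposition \ref{PvComm}); and reduce the Euclidean piece to the $\rho$-variation of classical $BMO$-commutators by covering with critical balls $Q_k$ and extending $b|_{\mathbb{Q}_k}$ to a global $BMO(\mathbb{R}^n)$ function via a Muckenhoupt-weight extension (Lemma \ref{add2}), then summing by bounded overlap. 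You also correctly single out as the key new ingredient the $L^p$-boundedness of $V_\rho(C_{b,\ell}^\varepsilon)$ for Euclidean $BMO$ commutators, which the paper establishes (Theorem \ref{CVarBMO}) by a sharp-maximal-function argument in the spirit of Gillespie--Torrea, essentially the Coifman--Rochberg--Weiss scheme you invoke.
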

In \cite[p. 516]{Sh} it was proved that if $V$ is a nonnegative polynomial, then $V\in B_q$, for every $1<q<\infty$. Then, as special cases of our results appear the corresponding ones to the Hermite operator $H=-\Delta+|x|^2$ (\cite{CMMTV} and \cite{CLMT}).

This paper is organized as follows. In Section 2 we describe a
general procedure that we shall use to prove our main results and we
present the $L^p$-properties of the variation operators associated
with the classical ($V\equiv 0$) heat semigroup $\{W_t\}_{t>0}$,
Riesz transforms $R_\ell $ and their commutators $C_{b,\ell }$, $\ell
=1,\cdots,n$, that will be very useful to our purposes. The proof of
Theorem \ref{LVarLp} is carried out in Section 3. We present proofs
of Proposition \ref{Pvriesz} and Theorem \ref{VarRiesz} in Section
4. Finally, in Section 5 we give proofs for Proposition \ref{PvComm}
and Theorem \ref{VarComm}.

Throughout this paper by $c$ and $C$ we will always denote positive constants that can change in each occurrence. If $1<p<\infty$, by $p'$ we represent the exponent conjugated of $p$, that is, $p'=\frac{p}{p-1}$.

\section{Procedure and auxiliary results}



In order to establish boundedness properties for harmonic analysis
operators (semigroup, maximal operators, Riesz transforms,
Littlewood-Paley functions,....) in the Schr\"{o}dinger setting it
is usual to exploit that $\mathcal{L}$ is actually a nice
perturbation of the Laplacian operator $-\Delta$. We now describe a
general procedure to analyze harmonic operators associated with the
Schr\"{o}dinger operator. Suppose that $T$ is a
$\mathcal{L}$-harmonic analysis operator and that $\mathcal{T}$ is
the corresponding $\Delta$-harmonic operator. According to the
function $\gamma$ described above, we split $\mathbb R^n \times
\mathbb R^n$ in two parts as follows
$$
A= \{(x,y) \in \mathbb R^n \times \mathbb R^n : |x-y| <\gamma(x)\},
$$
and
$$
B= (\mathbb R^n \times \mathbb R^n) \setminus A .
$$
The sets $A$ y $B$ are usually called local and global region
associated with $\mathcal{L},$ respectively. The local part of the
operator $T$ is defined by
$$
T_{{\rm loc}}(f)(x) = T(f \chi_{B(x,\gamma(x))}) (x),\,\,\,x\in
\mathbb{R}^n.
$$
In a similar way we consider the operator
$$
\mathcal{T}_{{\rm loc}}(f)(x)=
\mathcal{T}(f\chi_{B(x,\gamma(x))})(x),\,\,\,x\in \mathbb{R}^n.
$$
Then, we decompose the operator $T$ through
$$
T=(T_{{\rm loc}}-\mathcal{T}_{{\rm loc}}) + \mathcal{T}_{{\rm loc}} + (T-T_{{\rm loc}}).
$$
It is clear that $(T-T_{{\rm loc}})(f)(x) =
T(f\chi_{\mathbb{R}^n\setminus B(x,\gamma(x))})(x)$. Since the set
$\{(x,y): x \in \mathbb R^n, y \in \mathbb{R}^n\setminus
B(x,\gamma(x))\}$ is sufficiently far away from the diagonal (usual
line of singularities) $\{(x,x):x\in \mathbb R^n\}$, the operator
$T-T_{{\rm loc}}$ will be controlled by a positive and $L^p$-bounded
operator. We said that $\mathcal{L}$ is actually a nice perturbation
of the Laplacian operator $-\Delta$. That niceness leads to the
operators $T$ and $\mathcal{T}$ to have the same singularity in the
local region. Then, cancelation of singularities in $T_{{\rm loc}} -
\mathcal{T}_{{\rm loc}}$ takes place and $T_{{\rm
loc}}-\mathcal{T}_{{\rm loc}}$ is controlled by a positive and
$L^p$-bounded operator for the given range of $p$. In this way
$L^p$-boundedness of $T$ is reduced to the corresponding property
for the operator $\mathcal{T}_{{\rm loc}}$. Finally,
$L^p$-boundedness properties of the operator $\mathcal{T}_{{\rm
loc}}$ rely on well known properties for the classical harmonic
operator $\mathcal{T} $.

This procedure has been used in \cite{Sh} to establish $L^p$-boundedness properties for $\mathcal{L}$-Riesz transform.

We will employ this comparative method to describe the behavior in
$L^p$-spaces of the  variation operators for the heat semigroup
$\{W_t^{\mathcal{L}}\}_{t>0}$ generated by $-\mathcal{L}$, Riesz
transforms and commutators of Riesz transforms with the
multiplication by $BMO_\infty(\gamma)$-functions in the
Schr\"{o}dinger setting. Following this pattern we will need to know
 $L^p$-boundedness properties of the variation operators associated
with the classical heat semigroup, Riesz transforms and commutators
between Riesz transforms and multiplication by
$BMO(\mathbb{R}^n)$-functions.

In \cite[Theorem 3.3]{JR} it was established that that if
$\{T_t\}_{t>0}$ is a symmetric diffusion semigroup (in the sense of
\cite[p. 65]{Stein}) then the variation operator $V_\rho(T_t)$, with
$\rho >2$, is bounded from $L^p(\mathbb{R}^n)$ into itself for
every $1 < p < \infty$. This result applies to the symmetric
diffusion semigroup $\{W_t\}_{t>0}$ generated by the Euclidean
Laplacian $\Delta$. Recently, Crescimbeni, Mac{\'\i}as, Men\'arguez,
Torrea and Viviani (\cite{CMMTV}), by using vector valued Calder\'on-Zygmund
theory, have proved that the operators $V_\rho(W_t)$ map
$L^1(\mathbb R^n)$ into $L^{1,\infty}(\mathbb R^n)$, for each $\rho
>2$. These results are contained in the following.
\begin{Th} \label{VarLp}(\cite[Theorem 3.3]{JR} and \cite[Theorem 1.1]{CMMTV})
Let $\rho>2$. Then, the variation operator $V_\rho(W_t)$ is bounded
from $L^p(\mathbb R^n)$ into itself, for every $1<p<\infty$, and
from $L^1(\mathbb R^n)$ into $L^{1,\infty}(\mathbb R^n)$.
\end{Th}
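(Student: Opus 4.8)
\emph{Plan of proof.} Since the statement collects two known facts, the plan is to indicate how each half is obtained. The $L^p$-boundedness for $1<p<\infty$ is purely a consequence of the abstract structure: $\{W_t\}_{t>0}$ is a symmetric diffusion semigroup, i.e.\ it is a positivity preserving contraction on every $L^p(\mathbb R^n)$, $1\le p\le\infty$, it is self-adjoint on $L^2(\mathbb R^n)$, and $W_t1=1$. The first step is a routine reduction: the supremum defining $V_\rho(W_t)$ over all decreasing sequences $\{t_j\}\searrow 0$ is reduced to a countable supremum and linearised, so that $V_\rho(W_t)$ becomes a genuine (sublinear, measurable) operator; this is harmless because $t\mapsto W_tf(x)$ is continuous for $f$ in a dense class. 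Then one invokes Rota's dilation theorem: after a harmless reparametrisation, $W_t^2$ can be realised as $\mathbb{E}(\,\cdot\mid\mathcal F_t)$ for a decreasing family of $\sigma$-algebras on a suitable probability space containing an isometric copy of $L^p(\mathbb R^n)$. Lépingle's $\rho$-variation inequality for reverse martingales, valid precisely when $\rho>2$, controls $V_\rho(W_t^2)$ on $L^p$, and a standard semigroup argument (writing $W_t$ through $W_t^2$, or passing through the subordinated Poisson semigroup) removes the square and yields $\|V_\rho(W_t)f\|_p\le C_p\|f\|_p$ for all $1<p<\infty$. This is the content of \cite[Theorem 3.3]{JR}.

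For the endpoint $L^1(\mathbb R^n)\to L^{1,\infty}(\mathbb R^n)$ one exploits the explicit Gauss--Weierstrass kernel $W_t(x,y)=(4\pi t)^{-n/2}e^{-|x-y|^2/4t}$ and views $\mathcal W f(x)=(W_tf(x))_{t>0}$ as a function with values in the Banach space $E_\rho$, so that $V_\rho(W_t)(f)(x)=\|\mathcal W f(x)\|_{E_\rho}$. Then $\mathcal W$ is a vector-valued singular integral with $E_\rho$-valued kernel $\mathbf W(x,y)=(W_t(x,y))_{t>0}$, and the claim follows from Calder\'on--Zygmund theory in the vector-valued setting once three things are checked: (i) $\mathcal W$ is bounded from $L^2(\mathbb R^n)$ into $L^2_{E_\rho}(\mathbb R^n)$, which is exactly the case $p=2$ of the first part; (ii) the size estimate $\|\mathbf W(x,y)\|_{E_\rho}\le C|x-y|^{-n}$; and (iii) the H\"ormander-type smoothness estimate $\int_{|x-y|>2|y-y'|}\|\mathbf W(x,y)-\mathbf W(x,y')\|_{E_\rho}\,dx\le C$, together with its symmetric counterpart in the $x$ variable. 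For (ii) and (iii) one uses $\|\cdot\|_{E_\rho}\le\|\cdot\|_{E_1}$ and observes that for fixed $x\ne y$ the function $t\mapsto W_t(x,y)$ is unimodal, so its total variation in $t$ is comparable to its maximum, which is of order $|x-y|^{-n}$; likewise $t\mapsto \partial_{y_\ell}W_t(x,y)$ splits into a bounded number of monotone pieces and is controlled in total variation by $C|x-y|^{-n-1}$. Marcinkiewicz interpolation with the $L^2$ bound then even recovers $L^p$, $1<p\le 2$, independently. This is \cite[Theorem 1.1]{CMMTV}.

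The main obstacle is the smoothness estimate (iii): one must bound the $E_\rho$-norm, hence the total variation in $t$, of the difference $W_t(x,y)-W_t(x,y')$ uniformly after integration in $x$ over $|x-y|>2|y-y'|$. This is the technical core, carried out by the mean value theorem together with the sharp pointwise bounds on $\partial_{y_\ell}W_t(x,y)$ and a careful splitting of the $t$-axis according to whether $t$ is smaller or larger than $|x-y|^2$. By comparison, the linearisation and measurability preliminaries in the first part, and the verification that $\{W_t\}_{t>0}$ satisfies all of Stein's axioms, are routine.
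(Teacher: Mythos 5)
Your proposal is correct and matches the approach the paper points to: Theorem~\ref{VarLp} is quoted in the paper without proof, with the $L^p$-bound attributed to the symmetric-diffusion-semigroup machinery of \cite[Theorem 3.3]{JR} (Rota dilation plus L\'epingle's martingale variation inequality, exactly as you sketch) and the weak-$(1,1)$ endpoint to the vector-valued Calder\'on--Zygmund argument of \cite[Theorem 1.1]{CMMTV} with the $E_\rho$-valued kernel $\mathbf W(x,y)=(W_t(x,y))_{t>0}$. Your size and smoothness estimates (ii)--(iii), obtained via $\|\cdot\|_{E_\rho}\le\|\cdot\|_{E_1}$ and the unimodality in $t$ of $W_t(x,y)$ and its spatial derivatives, are precisely the kernel bounds the paper records (and reuses in Section~3) as properties~(1) and~(2) of $K(x,y;\cdot)$; the only cosmetic difference is that you state the smoothness as a H\"ormander integral condition while the paper records the stronger pointwise gradient bound $\|\partial K(x,y;\cdot)\|_{E_\rho}\le C|x-y|^{-n-1}$, which implies it.
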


For every $\ell =1,\cdots ,n$, the $\ell$-th Riesz transform $R_\ell $ is defined  by
$$
R_\ell f(x) = \lim_{\varepsilon \rightarrow 0^+} \int_{|x-y| > \varepsilon}\frac{x_\ell -y_\ell }{|x-y|^{n+1}} f(y) dy,\;\;\mbox{a.e.} \;\; x \in \mathbb R^n,
$$
for each $f \in L^p(\mathbb R^n)$, $1 \leq p < \infty$. For every $\varepsilon>0$ and $\ell =1,\ldots, n$, the $\varepsilon$-truncation of $R_\ell $ is given by
$$
R_\ell ^\varepsilon(f)(x)=\int_{|x-y| > \varepsilon}\frac{x_\ell -y_\ell }{|x-y|^{n+1}} f(y) dy,\quad x\in \mathbb{R}^n.
$$
We denote the kernel function of $R_\ell$ by
$R_\ell(z)=\frac{z_\ell}{|z|^{n+1}}$, $z=(z_1,\ldots,z_n)\in
\mathbb{R}^n\setminus \{0\}$. The variation operators for $R_\ell
^\varepsilon, \ell =1,\ldots,n$, were investigated in \cite{CJRW1}
and \cite{CJRW2} where the following results were proved.

\begin{Th} \label{RVarLp}(\cite[Theorems 1.1 and 1.2]{CJRW1} and \cite[Corollary 1.4]{CJRW2}). Let $\ell =1,\cdots,n$.
Assume that $\rho>2$. Then, the variation operator $V_\rho(R_\ell
^{\varepsilon})$ is bounded from $L^p(\mathbb R^n)$ into itself, for
every $1<p<\infty$, and from $L^1(\mathbb R^n)$ into
$L^{1,\infty}(\mathbb R^n)$.
\end{Th}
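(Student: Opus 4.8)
This is a restatement of results of \cite{CJRW1,CJRW2}, so strictly speaking it suffices to invoke those references; for completeness I describe the strategy one would follow. Fix $\ell$ and write $R_\ell^\varepsilon f=f*K^\varepsilon$, with $K^\varepsilon(z)=R_\ell(z)\chi_{\{|z|>\varepsilon\}}$. The plan is to split the $\rho$-variation into a \emph{long} part, measuring the oscillation of $t\mapsto R_\ell^t f$ across the dyadic scales $\{2^m\}_{m\in\mathbb{Z}}$, and a \emph{short} part, measuring the oscillation of $t\mapsto R_\ell^t f$ inside each window $[2^m,2^{m+1})$. First I would establish, by the usual bookkeeping for decreasing sequences together with the inclusion $\ell^\rho\hookrightarrow\ell^2$ (valid since $\rho>2$), the pointwise domination
$$
V_\rho(R_\ell^\varepsilon)f(x)\le c\,V_\rho\big(\{R_\ell^{2^m}\}_{m\in\mathbb{Z}}\big)f(x)+c\,\mathcal{S}f(x),
$$
where $\mathcal{S}f(x)=\big(\sum_{m\in\mathbb{Z}}(O_m f(x))^2\big)^{1/2}$ and $O_m f(x)$ denotes the $\rho$-variation of $\{R_\ell^t f(x)\}$ along decreasing sequences contained in $[2^m,2^{m+1})$.

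For the long part I would use that, since $\rho>2$, one has $V_\rho(\{R_\ell^{2^m}\}_m)f\le\big(\sum_m|R_\ell^{2^{m+1}}f-R_\ell^{2^m}f|^2\big)^{1/2}$. The kernel of $R_\ell^{2^{m+1}}-R_\ell^{2^m}$ is $\phi_m=R_\ell\,\chi_{\{2^m<|z|\le 2^{m+1}\}}$, supported in a dyadic annulus and with vanishing integral by the oddness of $R_\ell$; moreover $\phi_m(z)=2^{-mn}\phi_0(2^{-m}z)$, hence $\widehat{\phi_m}(\xi)=\widehat{\phi_0}(2^m\xi)$ and $\sum_m|\widehat{\phi_m}(\xi)|^2\le c$ uniformly in $\xi$. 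Thus the right-hand side is a Littlewood-Paley type square function: its $L^2$-boundedness follows from Plancherel, while boundedness on $L^p$, $1<p<\infty$, and the weak type $(1,1)$ follow from $\ell^2$-valued Calder\'on-Zygmund theory, once one checks that $\{\phi_m\}_m$ satisfies the H\"ormander condition with values in $\ell^2$.

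For the short part I would exploit that, inside a fixed dyadic window, $R_\ell^{t_j}f-R_\ell^{t_{j+1}}f=\int_{t_{j+1}<|x-y|\le t_j}R_\ell(x-y)f(y)\,dy$ is an integral over a thin annulus lying in a single dyadic shell, on which $R_\ell$ is smooth with $|\nabla R_\ell(z)|\le c|z|^{-n-1}$. This regularity allows one to bound $O_m f$ by a square function formed from differences of annular integrals and, after summation in $m$, to obtain $\|\mathcal{S}f\|_{L^2}\le c\|f\|_{L^2}$; the passage to $L^p$, $1<p<\infty$, and to weak type $(1,1)$ is then carried out by a vector-valued Calder\'on-Zygmund argument, verifying the size and H\"ormander conditions for the associated $E_\rho$-valued kernel. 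The main obstacle is exactly this $L^2$ bound for the short-variation operator: there Plancherel cannot be applied to a single multiplier, and the kernel regularity on each shell must be combined carefully with the hypothesis $\rho>2$. Once that is in hand, everything else reduces to standard Calder\'on-Zygmund theory.
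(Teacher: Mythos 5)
The paper offers no proof of Theorem~\ref{RVarLp}: it is stated as a direct citation of \cite[Theorems 1.1 and 1.2]{CJRW1} and \cite[Corollary 1.4]{CJRW2}, and you rightly note that citing those references suffices, which is exactly what the paper does.

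Your supplementary sketch, however, contains a genuine gap at the long-variation step. The claimed pointwise bound $V_\rho\big(\{R_\ell^{2^m}\}_m\big)f\le\big(\sum_m|R_\ell^{2^{m+1}}f-R_\ell^{2^m}f|^2\big)^{1/2}$ is false as a statement about sequences: if $\{b_m\}$ has increments $a_m=b_{m+1}-b_m$, the $\rho$-variation over decreasing subsequences is \emph{not} dominated by $\big(\sum_m|a_m|^2\big)^{1/2}$. Taking $b_m=\min(\max(m,0),M)$ gives $\big(\sum_m|a_m|^2\big)^{1/2}=\sqrt{M}$, while the single jump from $m=M$ to $m=0$ already contributes $M$ to any $V_\rho$. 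For $\rho>2$ the embedding only yields $V_\rho\le V_2$, and $V_2$ still allows telescoping over arbitrarily long gaps, so it cannot be reduced to the $\ell^2$ sum of consecutive increments. What is actually needed is either a L\'epingle-type variational inequality, obtained by comparing $\{R_\ell^{2^m}f\}_m$ with a martingale or a smooth semigroup approximant (the route taken in \cite{CJRW1} and \cite{CJRW2}), or a Rademacher-Menshov estimate of the form $V_\rho(\{b_m\})\le C\sum_{j\ge 0}2^{j/2}\big(\sum_m|b_{(m+1)2^j}-b_{m2^j}|^2\big)^{1/2}$ combined with geometric decay in $j$ supplied by almost-orthogonality of the annular pieces. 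The short-variation and endpoint parts of your outline are in the CJRW spirit, but without one of these additional inputs the long-variation bound does not close.
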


Let us mention that by using transference methods Gillespie and
Torrea (\cite[Theorem B]{GiTo}) have proved dimension free
$L^p(\mathbb R^n,|x|^\alpha dx)$ norm inequalities, for every $1<p<
\infty$ and $-1 < \alpha < p-1$, for variation operators of the
Riesz transform $R_\ell ,\;\ell =1,\cdots,n$.

Next let $b\in {\rm BMO}(\mathbb{R}^n)$. It is well known that, for
every $\ell =1,...,n$, the commutator operator $C_{b,\ell }$ defined
by
 $$
 C_{b,\ell }(f)=bR_\ell (f)-R_\ell (bf),
 $$
is bounded from $L^p(\mathbb{R}^n)$ into itself, and for each $f\in L^p(\mathbb{R}^n)$, $1<p<\infty$,
$$
C_{b,\ell }f(x)=\lim_{\varepsilon \rightarrow 0^+}C_{b,\ell }^\varepsilon (f)(x),\quad {\rm a.e. }\;\;x\in \mathbb{R}^n,
$$
where
$$
C_{b,\ell
}^\varepsilon(f)(x)=\int_{|x-y|>\varepsilon}(b(x)-b(y))R_\ell(x-y)f(y)dy, x\in \mathbb{R}^n,
$$
(\cite[Theorem I]{CRW}).

$L^p$-boundedness properties for the variation operators associated
with $C_{b,\ell }$, $\ell =1,\cdots,n$, are stated in the following.
To our knowledge the result is new, so we provide a proof.

\begin{Th}\label{CVarBMO} Let $b\in {\rm BMO}(\mathbb{R}^n)$ and $\ell =1,2,...,n$. Assume that $\rho>2$.
Then, the variation operator $V_{\rho}(C_{b,\ell }^\varepsilon)$
is bounded from $L^p(\mathbb{R}^n)$ into itself, for every
$1\!<\!p\!<\!\infty$.
\end{Th}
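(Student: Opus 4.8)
The plan is to obtain the $L^p$-boundedness of $V_\rho(C_{b,\ell}^\varepsilon)$ by combining the variational estimate for the Riesz transform truncations (Theorem~\ref{RVarLp}) with the standard Cauchy-Schwarz/sparse splitting of the commutator kernel, in the spirit of the Coifman--Rochberg--Weiss argument, but carried out at the level of the $E_\rho$-valued operator. Write $C_{b,\ell}^\varepsilon(f)(x) = b(x)R_\ell^\varepsilon(f)(x) - R_\ell^\varepsilon(bf)(x)$, so that, as elements of $E_\rho$,
$$
\bigl(C_{b,\ell}^\varepsilon(f)(x)\bigr)_{\varepsilon>0} = b(x)\bigl(R_\ell^\varepsilon(f)(x)\bigr)_{\varepsilon>0} - \bigl(R_\ell^\varepsilon(bf)(x)\bigr)_{\varepsilon>0}.
$$
The first term is pointwise dominated by $|b(x)|\,V_\rho(R_\ell^\varepsilon)(f)(x)$, which is fine away from the singularity but must be paired with a control on $b$; the real work is to show that the difference has the same $E_\rho$-norm bounds as a Calder\'on--Zygmund operator with an extra logarithmic factor absorbed by $b\in BMO$.

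First I would set up the vector-valued viewpoint: regard $\varepsilon\mapsto C_{b,\ell}^\varepsilon(f)(x)$ as a function valued in the Banach space $E_\rho$ (more precisely, in $E_\rho$ modulo constants, since $\|\cdot\|_{E_\rho}$ is only a seminorm), and observe that $V_\rho(C_{b,\ell}^\varepsilon)(f)(x) = \|C_{b,\ell}^\varepsilon(f)(x)\|_{E_\rho}$. The operator $f\mapsto (C_{b,\ell}^\varepsilon f)_{\varepsilon>0}$ then has kernel $(b(x)-b(y))\,(\chi_{|x-y|>\varepsilon}R_\ell(x-y))_{\varepsilon>0}$, an $E_\rho$-valued kernel. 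The key point, already available from \cite{CJRW1,CJRW2}, is that the $E_\rho$-valued kernel $K(x,y) = (\chi_{|x-y|>\varepsilon}R_\ell(x-y))_{\varepsilon>0}$ satisfies the standard size and smoothness (Calder\'on--Zygmund) estimates with values in $E_\rho$: $\|K(x,y)\|_{E_\rho}\lesssim |x-y|^{-n}$ and $\|K(x,y)-K(x',y)\|_{E_\rho}\lesssim |x-x'|/|x-y|^{n+1}$ for $|x-x'|<|x-y|/2$, and symmetrically in $y$; indeed $V_\rho(R_\ell^\varepsilon)$ is exactly the $L^p$-bounded Calder\'on--Zygmund operator with this kernel. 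Granting this, the commutator kernel $(b(x)-b(y))K(x,y)$ is handled by the classical $BMO$ commutator machinery: either (a) the Coifman--Rochberg--Weiss good-$\lambda$/sharp-maximal-function approach, estimating $(C_{b,\ell}^\varepsilon f)^\#$ pointwise by $M_r(V_\rho(R_\ell^\varepsilon)f) + \|b\|_{BMO}\,M_r f$ for some $r>1$ and invoking the Fefferman--Stein inequality together with Theorem~\ref{RVarLp}; or (b) the generalized Hölder inequality on $BMO$ times the $E_\rho$-valued Calder\'on--Zygmund theory (e.g.\ as in vector-valued extensions of \cite{CRW}). I would follow route (a), which is cleanest here.

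The steps, in order: (1) record that $K(x,y)=(\chi_{|x-y|>\varepsilon}R_\ell(x-y))_{\varepsilon>0}$ is an $E_\rho$-valued standard kernel and that $V_\rho(R_\ell^\varepsilon)$ is bounded on all $L^p$, $1<p<\infty$, and of weak type $(1,1)$ (Theorem~\ref{RVarLp}); (2) fix a ball $Q=B(x_0,r)$, write $b = (b-b_Q)\chi_{2Q} + (b-b_Q)\chi_{(2Q)^c} + b_Q =: b_1+b_2+b_Q$, and use $C_{b,\ell}^\varepsilon f = (b-b_Q)R_\ell^\varepsilon f - R_\ell^\varepsilon(b_1 f) - R_\ell^\varepsilon(b_2 f)$, noting the constant $b_Q$ drops out of the commutator; (3) for the term $(b-b_Q)R_\ell^\varepsilon f$, apply Hölder with exponents $s'$ and $s$ (for $s$ close to $1$) together with the John--Nirenberg inequality to get, on $Q$, an average bounded by $\|b\|_{BMO}\,(M_s V_\rho(R_\ell^\varepsilon)f)(x_0)$; (4) for $R_\ell^\varepsilon(b_1 f)$, use the weak-$(1,1)$ bound of $V_\rho(R_\ell^\varepsilon)$ via Kolmogorov's inequality plus John--Nirenberg to bound its $Q$-average by $\|b\|_{BMO}\,(M_s f)(x_0)$; (5) for $R_\ell^\varepsilon(b_2 f)$, the crucial cancellation step — subtract the constant $\bigl(R_\ell^\varepsilon(b_2 f)\bigr)(x_0)$ inside the $E_\rho$-norm (legitimate since $\|\cdot\|_{E_\rho}$ kills constants in $\varepsilon$, though here the subtracted quantity is an $E_\rho$-element, not a scalar, so one estimates $\|R_\ell^\varepsilon(b_2 f)(x) - R_\ell^\varepsilon(b_2 f)(x_0)\|_{E_\rho}$ directly using the $E_\rho$-valued smoothness estimate on $K$) and obtain an average over $Q$ bounded by $\|b\|_{BMO}\,(M f)(x_0)$ by the usual annular decomposition of $(2Q)^c$; (6) assemble (3)--(5) into a pointwise sharp-maximal estimate
$$
\bigl(V_\rho(C_{b,\ell}^\varepsilon)f\bigr)^\#(x) \lesssim \|b\|_{BMO}\,\bigl( M_s(V_\rho(R_\ell^\varepsilon)f)(x) + M_s f(x)\bigr),\qquad 1<s<\infty\ \text{arbitrary},
$$
and conclude by Fefferman--Stein, $L^p$-boundedness of $M_s$ for $s<p$, and Theorem~\ref{RVarLp}. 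I expect the main obstacle to be step (5): one must verify that the $E_\rho$-valued kernel $K(x,y)$ genuinely satisfies the Hörmander-type smoothness estimate $\|K(x,y)-K(x_0,y)\|_{E_\rho}\lesssim |x-x_0|^\delta/|x-y|^{n+\delta}$ — this is implicit in \cite{CJRW1,CJRW2} but needs to be stated explicitly — and that the subtraction of the $x_0$-centered $E_\rho$-valued ``constant'' is carried out correctly inside the seminorm; after that, all remaining estimates are routine $BMO$/maximal-function bookkeeping. A secondary technical point is the a.e.\ existence of $C_{b,\ell}^\varepsilon f(x)$ as an $E_\rho$-valued function, which follows from the scalar statement already quoted from \cite{CRW} together with the finiteness of $V_\rho(R_\ell^\varepsilon)(|b||f|)$ on a dense class.
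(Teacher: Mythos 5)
Your overall strategy coincides with the paper's: estimate the sharp maximal function of $V_\rho(C_{b,\ell}^\varepsilon)f$, decompose $b-b_B$ into a part on a small multiple of $B$ and the complement, and control the three resulting terms. Steps (3) and (4) are essentially what the paper does (the paper uses the strong $L^\beta$ bound from \cite[Theorem A]{CJRW2} where you use Kolmogorov plus the weak type, which is an equivalent route), and your handling of the constant $c_B$ via the reverse triangle inequality is the same as the paper's.

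The genuine gap is in step (5). The $E_\rho$-valued Hörmander estimate
$\|K(x,y)-K(x_0,y)\|_{E_\rho}\lesssim |x-x_0|/|x_0-y|^{n+1}$
for $K(x,y)=\bigl(\chi_{\{|x-y|>\varepsilon\}}R_\ell(x-y)\bigr)_{\varepsilon>0}$ is \emph{false}, so you cannot push the $E_\rho$-norm inside the $y$-integral and treat the commutator tail as a vector-valued Calder\'on--Zygmund integral against a smooth kernel. The reason is the truncation indicators: as a function of $\varepsilon$, the difference $K(x,y)(\varepsilon)-K(x_0,y)(\varepsilon)$ equals $\pm R_\ell(\cdot)$ (of size $\sim|x_0-y|^{-n}$) on the interval between $\min(|x-y|,|x_0-y|)$ and $\max(|x-y|,|x_0-y|)$ and then jumps to the small quantity $R_\ell(x-y)-R_\ell(x_0-y)$; the $E_\rho$-seminorm registers the two jumps at full size, giving $\|K(x,y)-K(x_0,y)\|_{E_\rho}\gtrsim|x_0-y|^{-n}$ with no gain of $|x-x_0|$. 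Applying Minkowski as you propose would produce $\int_{(2Q)^c}|x_0-y|^{-n}|b_2(y)||f(y)|\,dy$, which is not controlled by $\|b\|_{BMO}\,\mathcal{M}f(x_0)$.

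What the paper actually does for this term (following Gillespie--Torrea) is to keep the $y$-integral inside the $E_\rho$-norm and split $R_\ell^\varepsilon(b_2f)(x)-R_\ell^\varepsilon(b_2f)(x_0)$ into a piece $H_1$ where only the scalar kernel $R_\ell(\cdot)$ moves (here Minkowski plus classical kernel smoothness works and gives the expected $|x-x_0|/|x_0-y|^{n+1}$ gain), and a piece $H_2$ where only the indicators $\chi_{\{|x-y|>\varepsilon_j\}}$ versus $\chi_{\{|x_0-y|>\varepsilon_j\}}$ differ. For $H_2$ one fixes the sequence $\{\varepsilon_j\}$, observes that the symmetric difference of indicators is supported in four thin annular shells of width $\lesssim r_0$ and volume $v_j\lesssim(\max\{r_0,\varepsilon_j\})^{n-1}r_0$, applies H\"older with an exponent $1<s<\rho$ (strictly less than $\rho$ is essential so that the $\ell^\rho$ sum over $j$ can be dominated by an $\ell^s$ sum), restricts to the indices $j$ with $\varepsilon_j>r_0$, and only then sums to get $\mathcal{M}_r(f)+\mathcal{M}_{ts}(f)$. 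This shell-by-shell argument is the substitute for the $E_\rho$-valued CZ smoothness that your outline presupposes; it is where the real work of the proof lies, and it is not a consequence of the kernel estimates ``implicit in \cite{CJRW1,CJRW2}'' in the form you would need.
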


\begin{proof} Let $1<p<\infty$ and $f\in L^p(\mathbb{R}^n)$. Inspired in the ideas
developed in \cite{GiTo} our goal is to estimate the sharp maximal
function
$$
(V_\rho (C_{b,\ell }^\varepsilon )(f))^\# (x)=\sup
_{r>0}\frac{1}{|B(x,r)|}\int _{B(x,r)}|V_\rho (C_{b,\ell
}^\varepsilon)(f)(y)-c_{B(x,r)}|dy,\quad x\in \mathbb{R}^n,
$$
where, for every $x\in \mathbb{R}^n$ and $r>0$, $c_{B(x,r)}$ is a
constant that will be specified later.

Let $x_0\in \mathbb{R}^n$ and $r_0>0$ and denote by $B=B(x_0,r_0)$.
We decompose $f=f_1+f_2$, where $f_1=f\chi _{4B}$ and $f_2=f\chi
_{(4B)^c}$, and we write
\begin{eqnarray*}
C_{b,\ell }^\varepsilon f&=&(b-b_B)R_\ell ^\varepsilon (f)-R_\ell ^\varepsilon ((b-b_B)f_1)-R_\ell ^\varepsilon ((b-b_B)f_2)\\
&=&A_1^\varepsilon (f)+A_2^\varepsilon (f)+A_3^\varepsilon (f),\quad
\varepsilon >0.
\end{eqnarray*}

We have that
\begin{eqnarray*}
\int_{(4B)^c}\frac{|b(y)-b_B|}{|x-y|^n}|f(y)|dy&=&\sum_{k=2}^\infty
\int_{2^kr_0<|y-x_0|\leq 2^{k+1}r_0}\frac{|b(y)-b_B|}{|x-y|^n}|f(y)|dy\\
&\hspace{-6cm}\leq
&\hspace{-3cm}\sum_{k=2}^\infty\left(\int_{2^kr_0<|y-x_0|\leq
2^{k+1}r_0}\frac{|b(y)-b_B|^{p'}}{|x-y|^n}dy
\right)^{1/p'}\left(\int_{2^kr_0<|y-x_0|\leq 2^{k+1}r_0}\frac{|f(y)|^p}{|x-y|^n}dy\right)^{1/p}\\
&\hspace{-6cm}\leq&\hspace{-3cm}C\sum_{k=2}^\infty \left(\frac{1}{(2^{k+1}r_0)^n}\int_{|y-x_0|\leq 2^{k+1}r_0}|b(y)-b_B|^{p'}dy\right)^{1/p'}\frac{1}{(2^kr_0)^{n/p}}||f||_{L^p(\mathbb{R}^n)}\\
&\hspace{-6cm}\leq&\hspace{-3cm}C||b||_{{\rm
BMO}(\mathbb{R}^n)}||f||_{L^p(\mathbb{R}^n)}\sum_{k=2}^\infty
\frac{k}{(2^kr_0)^{n/p}},\quad x\in B.
\end{eqnarray*}
Then, we deduce that
\begin{multline*}
V_{\rho }(R_\ell ^\varepsilon)((b-b_B)f_2)(x)\\
\shoveleft{\hspace{21mm}=\sup_{\{\varepsilon_j\}_{j\in
\mathbb{N}}\searrow 0}\left(\sum_{j=0}^\infty
\left|\int_{\varepsilon _{j+1}<|x-y|<\varepsilon _j}R_\ell(x-y)(b(y)-b_B)f(y)\chi _{(4B)^c}(y)dy\right|^\rho \right)^{1/\rho}}\\
\shoveleft{\hspace{21mm}\leq \sup_{\{\varepsilon_j\}_{j\in
\mathbb{N}}\searrow 0}\sum_{j=0}^\infty
\int_{\varepsilon _{j+1}<|x-y|<\varepsilon _j}\frac{|b(y)-b_B||f(y)|}{|x-y|^n}\chi _{(4B)^c}(y)dy}\\
\leq\int_{(4B)^c}\frac{|b(y)-b_B||f(y)|}{|x-y|^n}dy\leq
Cr_0^{-n/p}||b||_{{\rm
BMO}(\mathbb{R}^n)}||f||_{L^p(\mathbb{R}^n)},\quad x\in
B.\hspace{10mm}
\end{multline*}

We denote $c_B=-V_\rho (R_\ell ^\varepsilon )((b-b_B)f_2)(x_0)$. It is clear that $c_B=V_\rho (A_3^\varepsilon
)(f)(x_0)$.

We have
\begin{eqnarray}\label{c1}
\lefteqn{\frac{1}{|B|}\int_B|V_\rho (C_{b,\ell }^\varepsilon )(f)(x)-c_B|dx } \\
&=&\frac{1}{|B|}\int_B|\,||A_1^\varepsilon (f)(x)+A_2^\varepsilon
(f)(x)+A_3^\varepsilon (f)(x)||_{E_\rho }
-||A_3^\varepsilon (f)(x_0)||_{E_\rho}\,|dx\nonumber\\
&\leq&\frac{1}{|B|}\int_B\,||A_1^\varepsilon (f)(x)+A_2^\varepsilon (f)(x)+A_3^\varepsilon (f)(x)-A_3^\varepsilon (f)(x_0)||_{E_\rho }dx \nonumber\\
&\leq& \frac{1}{|B|}\int_B||A_1^\varepsilon (f)(x)||_{E_\rho }dx+\frac{1}{|B|}\int_B||A_2^\varepsilon (f)(x)||_{E_\rho }dx\nonumber\\
&&\hspace{-3.5mm}+\hspace{1mm}\frac{1}{|B|}\int_B||A_3^\varepsilon
(f)(x)-A_3^\varepsilon (f)(x_0)||_{E_\rho }dx.\nonumber
\end{eqnarray}

We analyze each term. Firstly we obtain
\begin{eqnarray}\label{c2}
\frac{1}{|B|}\int_B||A_1^\varepsilon (f)(x)||_{E_\rho }dx&=&\frac{1}{|B|}\int_B|b(x)-b_B|V_\rho (R_\ell ^\varepsilon )(f)(x)dx\nonumber\\
&\leq& \left(\frac{1}{|B|}\int_B|b(x)-b_B|^{r'}dx\right)^{1/r'}\left(\frac{1}{|B|}\int_B\Big(V_\rho (R_\ell ^\varepsilon )(f)(x)\Big)^rdx\right)^{1/r}\nonumber\\
&\leq&C||b||_{{\rm BMO}(\mathbb{R}^n)}\mathcal{M}_r(V_\rho (R_\ell
^\varepsilon )(f))(z),\quad z\in B.
\end{eqnarray}
Here $1\le r<\infty$ and $\mathcal{M}_r$ is the $r$-maximal function
defined by
$$
\mathcal{M}_r(g)(x)=\sup_{x\in
B}\left(\frac{1}{|B|}\int_B|g(y)|^rdy\right)^{1/r},\,\,\,x\in
\mathbb{R}^n,
$$
for every measurable function $g$ on $\mathbb{R}^n$.

On the other hand, according to \cite[Theorem A]{CJRW2}, we have
that
\begin{eqnarray*}
\frac{1}{|B|}\int_B||A_2^\varepsilon (f)(x)||_{E_\rho }dx&\leq&\left(\frac{1}{|B|}\int_B|V_\rho (R_\ell ^\varepsilon )((b-b_B)f_1)(x)|^\beta dx\right)^{1/\beta }\\
&\leq& C\left(\frac{1}{|B|}\int_{4B}|b(x)-b_B|^{\beta }|f(x)|^\beta dx\right)^{1/\beta}\\
&\leq&C\left(\frac{1}{|B|}\int_{4B}|b(x)-b_B|^{s'\beta }dx\right)^{1/(s'\beta )}\left(\frac{1}{|B|}\int_{4B}|f(x)|^{s\beta }dx\right)^{1/(s\beta )}\\
&\leq&C||b||_{{\rm
BMO}(\mathbb{R}^n)}\left(\frac{1}{|B|}\int_{4B}|f(x)|^{s\beta
}dx\right)^{1/(s\beta )},
\end{eqnarray*}
where $1<s,\beta<\infty$. Then, for every $1<r<\infty$, we have
\begin{equation}\label{c3}
\frac{1}{|B|}\int_B||A_2^\varepsilon (f)(x)||_{E_\rho }dx\leq
C||b||_{{\rm BMO}(\mathbb{R}^n)}\mathcal{M}_r(f)(z),\quad z\in B.
\end{equation}
In order to study
$\|A_3^\varepsilon(f)(x)-A_3^\varepsilon(f)(x_0)\|_{E_\rho}$ we use
a procedure developed in \cite{GiTo}. We have that
\begin{equation}\label{B3}
\|R_\ell ^\varepsilon((b-b_B)f_2)(x)-R_\ell
^\varepsilon((b-b_B)f_2)(x_0)\|_{E_\rho} \leq H_1(x) + H_2(x),\;\; x
\in B,
\end{equation}
where, for every $x \in B$,
$$
H_1(x)=\left\|\int_{|x-y| > \varepsilon}(R_\ell (x-y)-R_\ell
(x_0-y))(b(y)-b_B)f_2(y)dy\right\|_{E_\rho}
$$
and
$$
H_2(x) =\left\|\int_{\mathbb{R}^n}\left(\chi _{\{|x-y|>\varepsilon
\}}(y)-\chi _{\{|x_0-y|>\varepsilon \}}(y)\right)R_\ell
(x_0-y)(b(y)-b_B)f_2(y)dy\right\|_{E_\rho}.
$$
By using Minkowski inequality and well known properties of the
function $R_\ell $ we get
\begin{eqnarray}\label{B4}
H_1(x) & \leq & \int_{\mathbb R^n}|R_\ell (x-y)-R_\ell (x_0-y)||b(y)-b_B||f(y)|\chi_{(4B)^c}(y) dy \nonumber\\
& \leq & C \sum_{k=2}^\infty \int_{2^kr_0 \leq |x_0 -y| \leq 2^{k+1}r_0}\frac{|x-x_0|}{|x_0-y|^{n+1}}|b(y)-b_B||f(y)| dy \nonumber \\
& \leq & C \sum_{k=2}^\infty
\frac{1}{2^k}\frac{1}{(2^kr_0)^n}\int_{2^{k+1}B}|b(y)-b_B||f(y)|dy ,
\;\; x \in B.
\end{eqnarray}

To analyze $H_2$ we split the
integral appearing in the norm in four terms as follows. Let
$\{\varepsilon_j\}_{j\in \mathbb{N}}$ be a real decreasing sequence
that converges to zero. We have
\begin{multline}\label{B5}
\int_{\mathbb R^n}\left|\chi_{\{\varepsilon_{j+1} < |x-y| < \varepsilon_j\}}(y) - \chi_{\{\varepsilon_{j+1} < |x_0-y| < \varepsilon_j\}}(y)\right| |R_\ell (x_0-y)||b(y)-b_B||f_2(y)|dy\\
\shoveleft{\hspace{20mm}\leq C\left(\int_{\mathbb R^n} \chi_{\{\varepsilon_{j+1} < |x-y| < \varepsilon_{j+1} +r_0\}}(y) \chi_{\{\varepsilon_{j+1} < |x-y| < \varepsilon_j\}}(y)\frac{1}{|x_0-y|^n}|b(y)-b_B||f_2(y)|dy\right.}\\
\shoveleft{\hspace{23mm}+ \int_{\mathbb R^n} \chi_{\{\varepsilon_{j} < |x_0-y| < \varepsilon_{j} +r_0\}}(y)  \chi_{\{\varepsilon_{j+1} < |x-y| < \varepsilon_j\}}(y)\frac{1}{|x_0-y|^n}|b(y)-b_B||f_2(y)|dy}\\
\shoveleft{\hspace{23mm}+ \int_{\mathbb R^n} \chi_{\{\varepsilon_{j+1} < |x_0-y| < \varepsilon_{j+1} +r_0\}}(y)  \chi_{\{\varepsilon_{j+1} < |x_0-y| < \varepsilon_j\}}(y)\frac{1}{|x_0-y|^n}|b(y)-b_B||f_2(y)|dy}\\
\shoveleft{\hspace{23mm}+\left. \int_{\mathbb R^n} \chi_{\{\varepsilon_{j} < |x-y| < \varepsilon_{j} +r_0\}}(y)  \chi_{\{\varepsilon_{j+1} < |x_0-y| < \varepsilon_j\}}(y)\frac{1}{|x_0-y|^n}|b(y)-b_B||f_2(y)|dy \right)}\\
\shoveleft{\hspace{20mm}= C(H^j_{2,1}(x) + H^j_{2,2}(x) +
H^j_{2,3}(x) + H^j_{2,4}(x)),\hspace{15mm}x\in
B\,\,\,\mbox{and}\,\,\,j\in \mathbb{N}.}
\end{multline}
We observe that $\frac{4}{3} |x-y| \geq |x_0-y| \geq
\frac{4}{5}|x-y|$, when $y \notin 4B$ and $x \in B$. Moreover, if
$x\in B$, then $H_{2,m}^j(x)=0$, for $m=1,3$, when $j\in \mathbb{N}$
and $r_0\ge \varepsilon_{j+1}$ and $H_{2,m}^j(x)=0$, when $m=2,4$,
$j\in \mathbb{N}$ and $r_0\ge \varepsilon_{j}$. For every $j\in
\mathbb{N}$, H\"{o}lder inequality leads to
$$
H_{2,1}^j(x) \leq C \left(\int_{\mathbb
R^n}\chi_{\{\varepsilon_{j+1} < |x-y| < \varepsilon_j\}}(y)
\frac{1}{|x-y|^{ns}}(|b(y)-b_B||f_2(y)|)^sdy\right)^{\frac{1}{s}}v_{j+1}^{\frac{1}{s'}},\;\;
x\in B,
$$
$$
H^j_{2,2}(x) \leq C \left(\int_{\mathbb
R^n}\chi_{\{\max\{\varepsilon_{j+1},
\frac{3}{4}\varepsilon_j\}<|x-y|< \varepsilon_j\}}(y)
\frac{1}{|x-y|^{ns}}(|b(y)-b_B||f_2(y)|)^sdy\right)^{\frac{1}{s}}v_j^{\frac{1}{s'}},
\;\; x\in B,
$$
$$
H_{2,3}^j(x) \leq C \left(\int_{\mathbb
R^n}\chi_{\{\varepsilon_{j+1} < |x_0-y| < \varepsilon_j\}}(y)
\frac{1}{|x_0-y|^{ns}}(|b(y)-b_B||f_2(y)|)^sdy\right)^{\frac{1}{s}}v_{j+1}^{\frac{1}{s'}},\;\;
x\in B,
$$
and
$$
H^j_{2,4}(x) \leq C \left(\int_{\mathbb
R^n}\chi_{\{\max\{\varepsilon_{j+1},
\frac{4}{5}\varepsilon_j\}<|x_0-y|< \varepsilon_j\}}(y)
\frac{1}{|x_0-y|^{ns}}(|b(y)-b_B||f_2(y)|)^sdy\right)^{\frac{1}{s}}v_j^{\frac{1}{s'}},
\;\; x\in B.
$$
Here we take $1 < s < \rho$, and $v_j=(\varepsilon_j+r_0)^n -
\varepsilon_j^n$, $j\in \mathbb{N}$. Note that $v_j \leq
C(\max\{r_0,\varepsilon_j\})^{n-1}r_0$, $j\in \mathbb{N}$, for a
certain $C > 0$ that does not depend on $j$.

We define the set $\mathcal{A}=\{j\in \mathbb{N}: r_0<\varepsilon
_j\}$. We have that
\begin{eqnarray*}
H_{2,1}^j(x)&\leq&C\frac{v_{j+1}^{1/s'}}{\varepsilon _{j+1}^{(n-1)/s'}}\left(\int_{\mathbb{R}^n}\chi _{\{\varepsilon _{j+1}<|x-y|<\varepsilon _j\}}(y)\frac{(|b(y)-b_B||f_2(y)|)^s}{|x-y|^{n+s-1}}dy\right)^{1/s}\\
&\leq&Cr_0^{1/s'}\left(\int_{\mathbb{R}^n}\chi _{\{\varepsilon
_{j+1}<|x-y|<\varepsilon
_j\}}(y)\frac{(|b(y)-b_B||f_2(y)|)^s}{|x-y|^{n+s-1}}dy\right)^{1/s},
\end{eqnarray*}
for every $x\in B$ and $j+1\in \mathcal{A}$. In a similar way we can
see that
$$
H_{2,2}^j(x)\leq Cr_0^{1/s'}\left(\int_{\mathbb{R}^n}\chi
_{\{\varepsilon _{j+1}<|x-y|<\varepsilon
_j\}}(y)\frac{(|b(y)-b_B||f_2(y)|)^s}{|x-y|^{n+s-1}}dy\right)^{1/s},\quad
x\in B\mbox{ and }j\in \mathcal{A},
$$
$$
H_{2,3}^j(x)\leq Cr_0^{1/s'}\left(\int_{\mathbb{R}^n}\chi
_{\{\varepsilon _{j+1}<|x_0-y|<\varepsilon
_j\}}(y)\frac{(|b(y)-b_B||f_2(y)|)^s}{|x_0-y|^{n+s-1}}dy\right)^{1/s},\quad
x\in B\mbox{ and }j+1\in \mathcal{A},
$$
and
$$
H_{2,4}^j(x)\leq Cr_0^{1/s'}\left(\int_{\mathbb{R}^n}\chi
_{\{\varepsilon _{j+1}<|x_0-y|<\varepsilon
_j\}}(y)\frac{(|b(y)-b_B||f_2(y)|)^s}{|x_0-y|^{n+s-1}}dy\right)^{1/s},\quad
x\in B\mbox{ and }j\in \mathcal{A}.
$$
Hence, we get by using Minkowski's inequality
\begin{eqnarray}\label{B6}
\Big(\sum _{j=0}^\infty |H_{2,1}^j(x)&+&H_{2,2}^j(x)|^\rho \Big)^{1/\rho}\leq C\left(\sum_{j+1\in \mathcal{A}}|H_{2,1}^j(x)|^\rho +\sum_{j\in \mathcal{A}}|H_{2,2}^j(x)|^\rho\right)^{1/\rho}\nonumber\\
&\leq&C\left(\sum_{j=0}^\infty \left(\int_{\mathbb{R}^n}\chi _{\{\varepsilon _{j+1}<|x-y|<\varepsilon _j\}}(y)\frac{(|b(y)-b_B||f_2(y)|)^s}{|x-y|^{n+s-1}}dy\right)^{\rho/s}r_0^{\rho /s'}\right)^ {1/\rho}\nonumber\\
&\leq&
C\left(\int_{\mathbb{R}^n}\frac{(|b(y)-b_B||f_2(y)|)^s}{|x-y|^{n+s-1}}dy\right)^{1/s}r_0^{1/s'}
\nonumber\\
&\leq& C\left(\sum_{k=2}^\infty
\frac{1}{(2^kr_0)^n}\int_{|x_0-y|<2^{k+1}r_0}(|b(y)-b_B||f(y)|)^sdy\frac{1}{2^{k(s-1)}}\right)^{1/s},\quad
x\in B.
\end{eqnarray}

In a similar way we get
\begin{eqnarray}\label{B7}
\Big(\sum_{j=0}^\infty |H_{2,3}^j(x)+ H_{2,4}^j(x)|^\rho \Big)^{1/\rho }\nonumber&&\\
&\hspace{-4cm}\le&\hspace{-2cm}C\left(\sum_{k=1}^\infty
\frac{1}{(2^kr_0)^n}\int_{|x_0-y|<2^{k+1}r_0}(|b(y)-b_B||f(y)|)^sdy\frac{1}{2^{k(s-1)}}\right)^{1/s},\,\,\,x\in
B.
\end{eqnarray}

By combining (\ref{B3}), (\ref{B4}), (\ref{B6}), and (\ref{B7}) it
follows that
\begin{eqnarray*}
||A_3^\varepsilon f(x)-A_3^\varepsilon f(x_0)||_{E_\rho }&=&||R_\ell ((b-b_B)f_2)(x)-R_\ell ((b-b_B)f_2)(x_0)||_{E_\rho }\\
&\leq&C\left(\sum_{k=1}^\infty \frac{2^{-k}}{(2^kr_0)^n}\int_{|x_0-y|<2^kr_0}|b(y)-b_B||f(y)|dy\right.\\
&&+\left.\left(\sum_{k=1}^\infty
\frac{2^{-k(s-1)}}{(2^kr_0)^n}\int_{|x_0-y|<2^kr_0}|(b(y)-b_B)f(y)|^sdy\right)^{1/s}\right),
\end{eqnarray*}
for $x\in B$. Then, H\"older's inequality implies that
\begin{eqnarray}\label{c4}
||A_3^\varepsilon (f)(x)-A_3^\varepsilon (f)(x_0)||_{E_\rho }&\leq&
C\left(\sum_{k=1}^\infty \frac{1}{2^k}\left(\frac{1}{(2^kr_0)^n}\int_{|x_0-y|<2^kr_0}|b(y)-b_B|^{r'}dy\right)^{1/r'}\right.\nonumber\\
&&\times \left(\frac{1}{(2^kr_0)^n}\int_{|x_0-y|<2^kr_0}|f(y)|^rdy\right)^{1/r}\nonumber\\
&&+\left(\sum_{k=1}^\infty \frac{1}{2^{k(s-1)}}\left(\frac{1}{(2^kr_0)^n}\int_{|x_0-y|<2^kr_0}|b(y)-b_B|^{st'}dy\right)^{1/t'}\right.\nonumber\\
&&\times\left.\left.\left(\frac{1}{(2^kr_0)^n}\int_{|x_0-y|<2^kr_0}|f(y)|^{st}dy\right)^{1/t}\right)^{1/s}\right)\nonumber\\
&\leq &C\left(\sum_{k=1}^\infty \frac{k}{2^k}||b||_{{\rm BMO}(\mathbb{R}^n)}\mathcal{M}_r(f)(z)\right.\nonumber\\
&&\left.+\left(\sum_{k=1}^\infty \frac{k^s}{2^{k(s-1)}}\right)^{1/s}||b||_{{\rm BMO}(\mathbb{R}^n)}\mathcal{M}_{ts}(f)(z)\right)\nonumber\\
&\leq&C||b||_{{\rm
BMO}(\mathbb{R}^n)}(\mathcal{M}_r(f)(z)+\mathcal{M}_{ts}(f)(z)),\quad
x,z\in B,
\end{eqnarray}
where $1<t,r<\infty$.

From (\ref{c4}) it follows
\begin{equation}\label{c5}
\frac{1}{|B|}\int_B||A_3^\varepsilon f(x)-A_3^\varepsilon
f(x_0)||_{E_\rho }dx\leq C||b||_{{\rm
BMO}(\mathbb{R}^n)}\mathcal{M}_r(f)(z),\quad z\in B,
\end{equation}
where $1<r<\infty$.

By combining (\ref{c1}), (\ref{c2}), (\ref{c3}) and (\ref{c5}) we
conclude that
$$
(V_\rho (C_{b,\ell }^\varepsilon )(f))^\# (x)\leq C||b||_{{\rm
BMO}(\mathbb{R}^n)}(\mathcal{M}_r(f)(x)+\mathcal{M}_r(V_\rho (R_\ell
^\varepsilon )(f))(x)), \quad x\in B,
$$
where $1<r<p$.

Since ${\mathcal{M}}_r$ is bounded from $L^p(\mathbb{R}^n)$ into
itself provided that $1<r<p<\infty$, \cite[Theorem A]{CJRW2} and
\cite[Corollary 1, p. 154]{Stein1} allow us to conclude that $V_\rho
(C_{b,\ell }^\varepsilon )$ is bounded from $L^p(\mathbb{R}^n)$ into
itself.
\end{proof}

In \cite{YYZ} Dachun Yang, Dongyong Yang and Yuan Zhou introduced
localized Riesz transform associated with the classical Laplacian
and Schr\"odinger operators. Here, we consider localized commutator
operators with the classical Riesz transforms.

Let $b\in BMO_\theta(\gamma)$, $\theta>0$. We define, for every
$\varepsilon>0$, the local truncation
$C_{b,\ell}^{\varepsilon,\rm{loc}}f$ of $f\in L^p(\mathbb{R}^n)$,
$1< p<\infty$, by
$$
C_{b,\ell}^{\varepsilon,\rm{loc}}(f)(x)=
\int_{\varepsilon<|x-y|<\gamma(x)}(b(x)-b(y))R_\ell(x-y)f(y)dy,\,\,\,
x\in \mathbb{R}^n.
$$

In order to define the local commutator $C_{b,\ell}^{\rm{loc}}$ on
$L^p(\mathbb{R}^n)$, $1<p<\infty$, we show the $L^p$ boundedness
properties for the maximal operator $C_{b,\ell}^{*,\rm{loc}}$
defined by
$$
C_{b,\ell}^{*,\rm{loc}}(f)(x)=\sup_{\varepsilon>0}|C_{b,\ell}^{\varepsilon,\rm{loc}}(f)(x)|,\,\,\,x\in
\mathbb{R}^n.
$$
Previously we state an auxiliary result that will be also useful in
other sections of this paper. According to \cite[Proposition
5]{DGMTZ} we choose a sequence $\{x_k\}_{k=1}^\infty \subset
\mathbb{R}^n$, such that if $Q_k=B(x_k,\gamma (x_k))$, $k\in
\mathbb{N}$, the following properties hold:

(i) $\cup_{k=1}^\infty Q_k=\mathbb{R}^n$;

(ii) For every $m\in \mathbb{N}$ there exist $C,\beta >0$ such that, for every $k\in \mathbb{N}$,
$$
\mbox{card }\{l\in \mathbb{N}:2^mQ_l\cap 2^mQ_k\not=\varnothing\}\leq C2^{m\beta }.
$$
The sequence $\{Q_k\}_{k\in \mathbb{N}}$ of balls will appear in
different occasions throughout this paper.

\begin{Lema} \label{add1} Let $b\in BMO_\theta(\gamma)$, $\theta\ge 0$, and $M>0$. We define the
operator $T_{b,M}$ by
$$
T_{b,M}(f)(x)=\frac{1}{\gamma(x)^n}\int_{|x-y|\le
M\gamma(x)}|b(x)-b(y)|f(y)dy,\,\,\,x\in \mathbb{R}^n.
$$
Then, for every $1<p<\infty$, $T_{b,M}$ is bounded from
$L^p(\mathbb{R}^n)$ into itself. Moreover,
$$
\|T_{b,M}(f)\|_{L^p(\mathbb{R}^n)}\le
C(1+MA)^{n+\theta'}\|b\|_{BMO_\theta(\gamma)}\|f\|_{L^p(\mathbb{R}^n)},\,\,\,f\in
L^p(\mathbb{R}^n).
$$
Here, $A,C>0$ and $\theta'>\theta$ do not depend on $M$.
\end{Lema}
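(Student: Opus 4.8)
The plan is to exploit the partition $\{Q_k\}_{k\in\mathbb N}$ from \cite[Proposition 5]{DGMTZ} in order to reduce the boundedness of $T_{b,M}$ to a sum of local estimates, each controlled by the $L^p$-boundedness of the Hardy--Littlewood maximal operator together with the definition of $BMO_\theta(\gamma)$. First I would record the well-known comparability properties of $\gamma$ from \cite[Section 1]{Sh}: there exist $c_0>0$ and $k_0\ge 1$ such that
\begin{equation*}
c_0^{-1}\Big(1+\frac{|x-y|}{\gamma(x)}\Big)^{-k_0}\le \frac{\gamma(y)}{\gamma(x)}\le c_0\Big(1+\frac{|x-y|}{\gamma(x)}\Big)^{k_0/(k_0+1)},\quad x,y\in\mathbb R^n.
\end{equation*}
In particular, if $|x-y|\le M\gamma(x)$ then $\gamma(y)\sim_{M}\gamma(x)$ with constants polynomial in $M$, and the ball $B(x,M\gamma(x))$ is contained in a fixed dilate $2^{N(M)}Q_k$ of any $Q_k$ that contains $x$, with $N(M)\sim\log(1+MA)$.

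The core of the argument is a pointwise bound on $T_{b,M}f(x)$ for $x\in Q_k$. For such $x$, write $B(x,M\gamma(x))\subset c2^{N}Q_k=:\widetilde Q_k$ (with $N=N(M)$), so that splitting $|b(x)-b(y)|\le |b(x)-b_{\widetilde Q_k}|+|b(y)-b_{\widetilde Q_k}|$ gives
\begin{equation*}
T_{b,M}f(x)\le \frac{C}{|\widetilde Q_k|}\Big(|b(x)-b_{\widetilde Q_k}|\int_{\widetilde Q_k}|f|+\int_{\widetilde Q_k}|b(y)-b_{\widetilde Q_k}||f(y)|\,dy\Big),\qquad x\in Q_k.
\end{equation*}
Here I used $\gamma(x)^n\sim |Q_k|\sim 2^{-nN}|\widetilde Q_k|$, absorbing the $2^{nN}\sim(1+MA)^n$ factor into the constant. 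The first term is handled by the John--Nirenberg inequality applied on $\widetilde Q_k$ (whose $BMO_\theta(\gamma)$-mean oscillation is at most $C(1+c2^N\gamma(x_k)/\gamma(x_k))^\theta\|b\|_{BMO_\theta(\gamma)}\le C(1+MA)^{\theta'}\|b\|_{BMO_\theta(\gamma)}$), raising to the power $p$ and integrating over $Q_k$: $\int_{Q_k}|b-b_{\widetilde Q_k}|^p\le C(1+MA)^{p\theta'}\|b\|_{BMO_\theta(\gamma)}^p|\widetilde Q_k|$. For the second term one uses H\"older with an exponent $p<r<\infty$, again invoking John--Nirenberg on $\widetilde Q_k$ to estimate the $r'$-oscillation of $b$. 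In both cases the remaining average of $|f|$ (resp.\ $|f|^{r}$) over $\widetilde Q_k$ is dominated by $\mathcal M(f)(z)$ (resp.\ $\mathcal M(|f|^{r})(z)^{1/r}$) for any $z\in Q_k$, i.e.\ by $\mathcal M_{r}f(z)$.

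Finally I would sum over $k$. Raising the pointwise bound to the $p$-th power, integrating over $Q_k$, and using the finite-overlap property (ii) of the family $\{2^m Q_k\}$ — with $m=N(M)$, which contributes a factor $C2^{N\beta}\le C(1+MA)^\beta$ — one obtains
\begin{equation*}
\|T_{b,M}f\|_{L^p(\mathbb R^n)}^p\le \sum_k\int_{Q_k}|T_{b,M}f|^p\le C(1+MA)^{p(n+\theta')+\beta}\|b\|_{BMO_\theta(\gamma)}^p\sum_k\int_{\widetilde Q_k}(\mathcal M_r f)^p,
\end{equation*}
and the overlap bound turns $\sum_k\int_{\widetilde Q_k}(\mathcal M_rf)^p$ into $C(1+MA)^\beta\|\mathcal M_rf\|_{L^p}^p\le C(1+MA)^\beta\|f\|_{L^p}^p$ since $r<p$. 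Relabelling $\theta'$ (absorbing the extra $\beta$-type powers into a slightly larger exponent, still independent of $M$) yields the claimed estimate $\|T_{b,M}f\|_{L^p}\le C(1+MA)^{n+\theta'}\|b\|_{BMO_\theta(\gamma)}\|f\|_{L^p}$. The main obstacle is bookkeeping the dependence on $M$ cleanly: one must make sure that every dilation $2^{N(M)}$ introduced (to cover $B(x,M\gamma(x))$ by a dilate of $Q_k$, to control $\gamma(y)/\gamma(x)$, and to apply the overlap property) contributes only a polynomial-in-$(1+MA)$ loss, and that the John--Nirenberg constants on the enlarged cubes are likewise polynomially controlled via the $(1+r/\gamma(x))^\theta$ growth built into the $BMO_\theta(\gamma)$ norm.
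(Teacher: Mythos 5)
Your proposal matches the paper's proof in all essentials --- partition by $\{Q_k\}$, comparability $\gamma(x)\sim\gamma(x_k)$ on $Q_k$, inflating to a ball $\widetilde Q_k\supset B(x,M\gamma(x))$, splitting $|b(x)-b(y)|$ via the mean over the inflated ball, the John--Nirenberg-type estimate from \cite[Proposition 3]{BHS3}, and finite overlap with all dilations tracked polynomially in $(1+MA)$ --- the only technical variant being that you route the local averages through $\mathcal M_r$ and its $L^p$-boundedness, whereas the paper applies H\"older with exponents $(p,p')$ inside each ball and sums $\sum_k\int_{\widehat Q_k}|f|^p$ directly by the overlap property. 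One slip to fix: where you write ``H\"older with an exponent $p<r<\infty$'' you must mean $1<r<p$, consistent with your later use of the fact that $\mathcal M_r$ is bounded on $L^p$.
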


\begin{proof} Let $1<p<\infty$ and $f\in L^p(\mathbb{R}^n)$ . We have that
$$
\|T_{b,M}(f)\|_{L^p(\mathbb{R}^n)}\le C\Big(\sum_{k=1}^\infty
\int_{Q_k}|T_{b,M}(f)(x)|^pdx\Big)^{1/p}.
$$
Since $\gamma(x)\sim \gamma(x_k)$, for every $x\in Q_k$, $k\in
\mathbb{N}$, by using H\"older and Minkowski inequalities and
\cite[Proposition 3]{BHS3}, it follows that, for certain $A>0$ and
$\theta'>\theta$,
\begin{eqnarray*}
&&\int_{Q_k}|T_{b,M}(f)(x)|^pdx\le
C\int_{Q_k}\Big(\frac{1}{\gamma(x_k)^n}\int_{|x_k-y|\le
(1+MA)\gamma(x_k)}|b(x)-b(y)||f(y)|dy\Big)^pdx\\
&\le&C\int_{Q_k}\frac{1}{\gamma(x_k)^{np}}\int_{|x_k-y|\le
(1+MA)\gamma(x_k)}|f(y)|^pdy\Big(\int_{|x_k-y|\le
(1+MA)\gamma(x_k)}|b(x)-b(y)|^{p'}dy\Big)^{p/p'}dx\\
&\le&C\int_{Q_k}\frac{1}{\gamma(x_k)^{np}}\int_{|x_k-y|\le
(1+MA)\gamma(x_k)}|f(y)|^pdy\\
&\times&\Big(\Big(\int_{|x_k-y|\le
(1+MA)\gamma(x_k)}|b(x)-b_{\widehat{Q_k}}|^{p'}dy\Big)^{p/p'}+\Big(\int_{|x_k-y|\le
(1+MA)\gamma(x_k)}|b(y)-b_{\widehat{Q_k}}|^{p'}dy\Big)^{p/p'}\Big)dx\\
&\le&C\int_{Q_k}\frac{1}{\gamma(x_k)^{np}}\int_{|x_k-y|\le
(1+MA)\gamma(x_k)}|f(y)|^pdy\\
&\times&
\Big(|b(x)-b_{\widehat{Q_k}}|^p+\|b\|^p_{BMO_\theta(\gamma)}(2+MA)^{p\theta'}\Big)((1+MA)\gamma(x_k))^{np/p'}dx\\
&\le& C\int_{|x_k-y|\le (1+MA)\gamma(x_k)}|f(y)|^pdy
\|b\|^p_{BMO_\theta(\gamma)}(1+MA)^{(n+\theta')p},    \,\,\,k\in
\mathbb{N},
\end{eqnarray*}
where $\widehat{Q_k}=B(x_k,(1+MA)\gamma(x_k))$, $k\in \mathbb{N}$.

Hence, by taking into account the properties of $\{Q_k\}_{k\in
\mathbb{N}}$, we get
\begin{eqnarray*}
\|T_{b,M}(f)\|_{L^p(\mathbb{R}^n)}&\le&
C\Big(\sum_{k=1}^\infty\int_{|x_k-y|\le (1+MA)\gamma(x_k)}|f(y)|^pdy
\|b\|^p_{BMO_\theta(\gamma)}(1+MA)^{(n+\theta')p}\Big)^{1/p}\\
&\le&C(1+MA)^{n+\theta'}\|b\|_{BMO_\theta(\gamma)}\|f\|_{L^p(\mathbb{R}^n)}.
\end{eqnarray*}
\end{proof}

Our next step is to advance on the boundedness of the corresponding
local operators. The case of heat semigroup and Riesz transforms are
not very difficult and the will be done while proving Theorems
\ref{LVarLp} and \ref{VarRiesz} in Sections 3 and 4, respectively.
As far the commutator, it involves an additional difficulty.
According to our procedure we start with the commutator $C_{b,\ell}$
with $b\in BMO_\theta(\gamma)$ and we reduce the problem to the
local classical commutator but now $b$ is not necessarily in $BMO$
in the classical sense. In the next lemma and proposition we show
how to overcome this problem.

\begin{Lema}\label{add2} Let $\theta\ge 0$ and $L>0$. There exists $C>0$ such
that, for every $k\in \mathbb{N}$ and $b\in BMO_\theta(\gamma)$, we
can find a function $\mathfrak{b}_k\in BMO(\mathbb{R}^n)$ for which
$\mathfrak{b}_k=b$ on $LQ_k=B(x_k,L\gamma(x_k))$ and
$\|\mathfrak{b}_k\|_{BMO(\mathbb{R}^n)}\le
C\|b\|_{BMO_\theta(\gamma)}$.
\end{Lema}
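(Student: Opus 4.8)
The plan is to reduce the statement to the classical fact that a function with bounded mean oscillation over the subballs of a fixed ball extends to a global $BMO(\mathbb{R}^n)$-function with comparable norm; the point is to produce from $b\in BMO_\theta(\gamma)$ a function that, on $LQ_k$, behaves like an honest $BMO(\mathbb{R}^n)$-function, with a norm controlled by $\|b\|_{BMO_\theta(\gamma)}$ \emph{uniformly} in $k$.

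First I would exploit that $\gamma$ is slowly varying. By the basic estimates for $\gamma$ recalled in \cite[Section 1]{Sh} (see also \cite{Sh0} and \cite[Proposition 3]{BHS3}) there exist $c_L,C_L>0$, depending only on $L$ and $n$, so that $c_L\,\gamma(x_k)\le \gamma(x)\le C_L\,\gamma(x_k)$ whenever $x\in 2LQ_k$. Hence, if $B(x,r)\subseteq LQ_k$, then $x\in LQ_k$ and $r\le L\gamma(x_k)\le (L/c_L)\gamma(x)$, so that $\big(1+r/\gamma(x)\big)^\theta\le (1+L/c_L)^\theta$ and, by the very definition of $BMO_\theta(\gamma)$,
$$
\frac{1}{|B(x,r)|}\int_{B(x,r)}|b(y)-b_{B(x,r)}|\,dy\le (1+L/c_L)^\theta\,\|b\|_{BMO_\theta(\gamma)}=:A,
$$
for every ball $B(x,r)\subseteq LQ_k$; note that $A=C\|b\|_{BMO_\theta(\gamma)}$ with $C$ depending only on $L$, $n$ and $\theta$, and in particular not on $k$. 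Thus the restriction of $b$ to $LQ_k$ has bounded mean oscillation over the subballs of $LQ_k$, with bound $A$.

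Next I would extend this local datum to all of $\mathbb{R}^n$ by the standard reflection procedure. I would take a Whitney decomposition $\mathbb{R}^n\setminus LQ_k=\bigcup_j R_j$ into dyadic cubes with $\mathrm{diam}\,R_j\sim \mathrm{dist}(R_j,LQ_k)$, choose for each $j$ a cube $\widetilde R_j\subseteq LQ_k$ of sidelength $\sim\min\{\ell(R_j),\gamma(x_k)\}$ as close as possible to $R_j$, and set
$$
\mathfrak{b}_k(x)=b(x)\ \text{ if }\ x\in LQ_k,\qquad \mathfrak{b}_k(x)=b_{\widetilde R_j}\ \text{ if }\ x\in R_j.
$$
Then $\mathfrak{b}_k=b$ on $LQ_k$, and it only remains to check that $\mathfrak{b}_k\in BMO(\mathbb{R}^n)$ with $\|\mathfrak{b}_k\|_{BMO(\mathbb{R}^n)}\le c_nA=C\|b\|_{BMO_\theta(\gamma)}$, which is exactly the conclusion. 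The verification is the usual case analysis for a ball $B$: if $B$ lies in a single $R_j$ its oscillation is $0$; if $B\subseteq LQ_k$ it is $\le A$ by the previous step; if $B$ is large or meets $\partial(LQ_k)$ one covers $B$ by the Whitney cubes it intersects together with a controlled dilate of $B$ inside $LQ_k$, uses that neighbouring Whitney cubes $R_i,R_j$ satisfy $|b_{\widetilde R_i}-b_{\widetilde R_j}|\le CA$ (because $\widetilde R_i,\widetilde R_j$ are comparable cubes joined by a short chain of comparable subcubes of $LQ_k$), and compares $b_{\widetilde R_j}$ with the mean of $b$ over a subball of $LQ_k$ of size $\sim\ell(R_j)$.

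The main obstacle is precisely this last case, the balls straddling the sphere $\partial(LQ_k)$. A crude extension, such as setting $\mathfrak{b}_k\equiv b_{LQ_k}$ off $LQ_k$, does \emph{not} work: near $\partial(LQ_k)$ the difference between $b_B$, for $B$ a small boundary ball of radius $r$, and $b_{LQ_k}$ may grow like $\log(\gamma(x_k)/r)$, which destroys the $BMO$ bound. Reflecting instead to \emph{comparable-size} subcubes of $LQ_k$ keeps the oscillation of $\mathfrak{b}_k$ across $\partial(LQ_k)$ bounded by $CA$, and this is what yields the uniform constant $C$ asserted in the lemma.
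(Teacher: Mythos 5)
Your argument is correct but takes a genuinely different route from the paper's. You both begin with the same localization step: using the slow variation of $\gamma$ to show that $b$ restricted to $LQ_k$ has bounded mean oscillation with bound $A=C\|b\|_{BMO_\theta(\gamma)}$, uniformly in $k$. From there the two proofs diverge. The paper passes through the $A_2$ theory: it sets $w_k=\exp(b_k)$, invokes the John--Nirenberg characterization to place $w_k$ in $A_2(LQ_k)$, extends $w_k$ to an $A_2(\mathbb{R}^n)$ weight with controlled characteristic via the weight-extension lemma of \cite[Lemma 1]{BHS4} (see also \cite{Har}), and takes the logarithm. (Strictly speaking one needs to rescale $b_k$ by a small factor $\varepsilon$ before exponentiating so that $\exp(\varepsilon b_k)\in A_2$; linearity of $BMO$ makes this harmless, but the paper glosses over it.) You instead extend $b$ directly from $LQ_k$ by the Whitney-reflection construction of P.~Jones for $BMO$ extension domains. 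Your route is more elementary and self-contained, avoids the exponential/logarithm detour and the $\varepsilon$-rescaling subtlety, and requires only that a ball is a uniform domain; what the paper's route buys is that it can cite \cite{BHS4} as a black box rather than carrying out the boundary case analysis. Your sketch correctly isolates the boundary-straddling balls as the delicate case and correctly explains why the naive extension by a constant fails (a logarithmic loss near $\partial(LQ_k)$); the case analysis you gesture at is standard and would close the argument, so I regard the proof as essentially complete modulo routine verification.
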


\begin{proof} Let $k\in \mathbb{N}$ and $b\in BMO_\theta(\gamma)$.
We define $b_k=b\chi_{LQ_k}$. Assume that $z_0\in \mathbb{R}^n$ and
$r_0>0$ such that $B(z_0,r_0)\subset LQ_k$. It is clear that
$|z_0-x_k|\le L\gamma(x_k)$. Then, for a certain $A>0$ that does not
depend on $k$, $\gamma(x_k)\le A\gamma(z_0)$.

We have that
\begin{eqnarray*}
\frac{1}{|B(z_0,r_0)|}\int_{B(z_0,r_0)}|b(x)-b_{B(z_0,r_0)}|dx&\le&
\|b\|_{BMO_\theta(\gamma)}\Big(1+\frac{r_0}{\gamma(z_0)}\Big)^\theta\\
&\le&C\|b\|_{BMO_\theta(\gamma)},
\end{eqnarray*}
where $C>0$ depends on $L$ and $\theta$ but does not depend on $k$.
Hence, $b\in BMO(LQ_k)$ and $\|b\|_{BMO(LQ_k)}\le
C\|b\|_{BMO_\theta(\gamma)}$.

We define $w_k=\rm{exp}(b_k)$. It is well known that $w_k\in
A_2(LQ_k)$, where $A_2(LQ_k)$ denotes the Muckenhoupt class of
weights. Moreover, the $A_2$- characteristic $[w_k]_{A_2(LQ_k)}$
satisfies that $ [w_k]_{A_2(LQ_k)}\le C\|b\|_{BMO(LQ_k)}, $ where
$C>0$ does not depend on $k$. According to \cite[Lemma 1]{BHS4} (see
also \cite{Har}) there exists $\widetilde{w_k}\in A_2(\mathbb{R}^n)$
such that $\widetilde{w_k}=w_k$ in $LQ_k$, and $
[\widetilde{w}_k]_{A_2(\mathbb{R}^n)}\le C [w_k]_{A_2(LQ_k)}, $
$C>0$ being independent of $k$. Then, there exists
$\mathfrak{b}_k\in BMO(\mathbb{R}^n)$ satisfying that
$\widetilde{w}_k=\rm{exp}(\mathfrak{b}_k)$ and
$\|\mathfrak{b}_k\|_{BMO(\mathbb{R}^n)}\le
C[\widetilde{w}_k]_{A_2(\mathbb{R}^n)}$, where $C>0$ does not depend
on $k$.

We conclude that $\mathfrak{b}_k=b$ on $LQ_k$ and
$\|\mathfrak{b}_k\|_{BMO(\mathbb{R}^n)}\le C
\|b\|_{BMO_\theta(\gamma)}$, for a certain $C>0$ independent of $k$.
\end{proof}

\begin{Prop}\label{master} Let $b\in BMO_\infty(\gamma)$ and $\ell=1,\cdots,n$. Then, the maximal operator $C_{b,\ell}^{*,\rm{loc}}$
is bounded from $L^p(\mathbb{R}^n)$ into itself, for every $1\!<\!p\!<\!\infty$.
\end{Prop}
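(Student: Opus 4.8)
The plan is to reduce the boundedness of $C_{b,\ell}^{*,\rm{loc}}$ to the classical Cotlar-type estimate for the maximal truncated commutator $C_{\mathfrak{b},\ell}^*$ associated with a genuine $BMO(\mathbb{R}^n)$ function, by freezing the pole of $\gamma$ on the covering $\{Q_k\}_{k\in\mathbb{N}}$. First I would fix $1<p<\infty$, $f\in L^p(\mathbb{R}^n)$, write $b\in BMO_\theta(\gamma)$ for some $\theta>0$, and use property (i) of the covering together with the finite-overlap property (ii) to reduce the $L^p$ norm of $C_{b,\ell}^{*,\rm{loc}}(f)$ to the sum $\big(\sum_{k}\int_{Q_k}|C_{b,\ell}^{*,\rm{loc}}(f)(x)|^p dx\big)^{1/p}$. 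On each $Q_k=B(x_k,\gamma(x_k))$ one has $\gamma(x)\sim\gamma(x_k)$, so if $x\in Q_k$ and $|x-y|<\gamma(x)$ then $y\in cQ_k$ for some fixed dilation constant $c>1$; hence for $x\in Q_k$,
$$
C_{b,\ell}^{\varepsilon,\rm{loc}}(f)(x)=\int_{\varepsilon<|x-y|<\gamma(x)}(b(x)-b(y))R_\ell(x-y)f(y)\,dy,
$$
and all the $y$ that intervene lie in $cQ_k$, where $b$ coincides with the function $\mathfrak{b}_k\in BMO(\mathbb{R}^n)$ produced by Lemma~\ref{add2} (applied with $L=c$), satisfying $\|\mathfrak{b}_k\|_{BMO(\mathbb{R}^n)}\le C\|b\|_{BMO_\theta(\gamma)}$ with $C$ independent of $k$.

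Next I would split, for $x\in Q_k$,
$$
|C_{b,\ell}^{\varepsilon,\rm{loc}}(f)(x)|\le
\Big|\int_{|x-y|>\varepsilon}(\mathfrak{b}_k(x)-\mathfrak{b}_k(y))R_\ell(x-y)(f\chi_{cQ_k})(y)\,dy\Big|
+\Big|\int_{|x-y|\ge\gamma(x)}(\mathfrak{b}_k(x)-\mathfrak{b}_k(y))R_\ell(x-y)(f\chi_{cQ_k})(y)\,dy\Big|,
$$
using that $(b(x)-b(y))f(y)\chi_{\{|x-y|<\gamma(x)\}}=(\mathfrak{b}_k(x)-\mathfrak{b}_k(y))(f\chi_{cQ_k})(y)\chi_{\{|x-y|<\gamma(x)\}}$ for $x\in Q_k$. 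Taking the supremum over $\varepsilon>0$, the first term is bounded pointwise on $Q_k$ by the classical maximal commutator $C_{\mathfrak{b}_k,\ell}^*(f\chi_{cQ_k})(x)$, which by the classical theory (\cite[Theorem I]{CRW} together with the standard Cotlar inequality for $C_{b,\ell}^*$, e.g.\ $C_{b,\ell}^*(g)\le C\big(\mathcal{M}_s(C_{b,\ell}g)+\|b\|_{BMO}\mathcal{M}_s(g)\big)$ for $1<s<\infty$) satisfies $\|C_{\mathfrak{b}_k,\ell}^*(f\chi_{cQ_k})\|_{L^p}\le C\|b\|_{BMO_\theta(\gamma)}\|f\chi_{cQ_k}\|_{L^p}$ with $C$ independent of $k$. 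The second term is a "tail" piece: since $|x-y|\ge\gamma(x)$, the kernel obeys $|R_\ell(x-y)|\le C/|x-y|^n\le C/\gamma(x)^n$ and, as $y\in cQ_k$, it is dominated on $Q_k$ by $C\,T_{\mathfrak{b}_k,c}(f\chi_{cQ_k})(x)$ in the notation of Lemma~\ref{add1} (with $b$ there replaced by $\mathfrak{b}_k$, $M=c$), whose $L^p$ norm is $\le C(1+cA)^{n+\theta'}\|\mathfrak{b}_k\|_{BMO(\mathbb{R}^n)}\|f\chi_{cQ_k}\|_{L^p}\le C\|b\|_{BMO_\theta(\gamma)}\|f\chi_{cQ_k}\|_{L^p}$, again uniformly in $k$.

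Finally, combining the two estimates and raising to the $p$-th power,
$$
\int_{Q_k}|C_{b,\ell}^{*,\rm{loc}}(f)(x)|^p\,dx\le C\|b\|^p_{BMO_\theta(\gamma)}\int_{cQ_k}|f(y)|^p\,dy,\qquad k\in\mathbb{N},
$$
and summing over $k$, the finite-overlap property (ii) of $\{cQ_k\}_{k\in\mathbb{N}}$ (with $m$ chosen so that $2^m\ge c$) gives $\sum_k\int_{cQ_k}|f|^p\le C\|f\|^p_{L^p(\mathbb{R}^n)}$, whence $\|C_{b,\ell}^{*,\rm{loc}}(f)\|_{L^p(\mathbb{R}^n)}\le C\|b\|_{BMO_\theta(\gamma)}\|f\|_{L^p(\mathbb{R}^n)}$. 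The main obstacle is the bookkeeping in the first paragraph: one must check carefully that, for $x\in Q_k$, the localization $|x-y|<\gamma(x)$ really forces $y$ into a fixed dilate $cQ_k$ (this uses $\gamma(x)\sim\gamma(x_k)$ on $Q_k$, a property of $\gamma$ from \cite[Section 1]{Sh}), so that replacing $b$ by $\mathfrak{b}_k$ is legitimate and the constant $c$ can be fixed once and for all independently of $k$; everything else is the routine combination of the classical maximal-commutator bound, Lemma~\ref{add1}, Lemma~\ref{add2}, and the covering properties.
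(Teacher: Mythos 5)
Your proposal is correct and follows essentially the same route as the paper's proof: you split the local truncation (for $x\in Q_k$) into a full classical maximal-commutator piece on $f\chi_{cQ_k}$ and a tail piece supported where $|x-y|\gtrsim\gamma(x)$, control the first via the classical boundedness of $C^*_{\mathfrak{b}_k,\ell}$ with $\mathfrak{b}_k$ from Lemma~\ref{add2}, control the tail via the positive operator of Lemma~\ref{add1}, and then sum over the covering $\{Q_k\}$. The only cosmetic difference is that the paper keeps $b$ (rather than $\mathfrak{b}_k$) in the tail term and applies Lemma~\ref{add1} directly to $b\in BMO_\theta(\gamma)$, but since $\mathfrak{b}_k=b$ on the relevant domain and $BMO(\mathbb{R}^n)=BMO_0(\gamma)$ the two bookkeepings are interchangeable.
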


\begin{proof} 
%
%

Assume that $b\in BMO_\theta(\gamma)$, with $\theta\ge 0$. Fix $k\in
\mathbb{N}$. According to \cite[Lemma 1.4]{Sh}
$\gamma(x)\sim\gamma(x_k)$, for every $x\in Q_k$. We choose $L>0$ independent of k
such that, for any $x\in Q_k$, $B(x,\gamma(x))\subset \mathbb{Q}_k$, where
$\mathbb{Q}_k=B(x_k,L\gamma(x_k))$. We can write
\begin{eqnarray}\label{VP1}
&&\Big|\int_{\varepsilon<|x-y|<\gamma(x)}(b(x)-b(y))R_\ell(x-y)f(y)dy\Big|\nonumber\\
&\le&\Big|\int_{\varepsilon<|x-y|<\gamma(x)}(b(x)-b(y))R_\ell(x-y)f(y)dy-\int_{y\in \mathbb{Q}_k,|x-y|>\varepsilon}(b(x)-b(y))R_\ell(x-y)f(y)dy\Big|\nonumber\\
&+&\Big|\int_{y\in \mathbb{Q}_k,|x-y|>\varepsilon}(b(x)-b(y))R_\ell(x-y)f(y)dy\Big|\nonumber\\
&&\hspace{10mm}=J_{1,k}(x,\varepsilon)+J_{2,k}(x,\varepsilon),\,\,\,x\in
Q_k,\,\,\,\varepsilon>0.
\end{eqnarray}

Let us analyze $J_{i,k}(x,\varepsilon)$, for $x\in Q_k$ and $\varepsilon>0$, $i=1,2$. Observe that, since
$\gamma(y)\sim \gamma(x_k)$, for every $y\in \mathbb{Q}_k$, we can
find $C,M>0$ that do not depend on $k\in \mathbb{N}$ such that
\begin{eqnarray*}
J_{1,k}(x,\varepsilon)&\le& C\int_{\mathbb{Q}_k\setminus
B(x,\gamma(x))}\frac{|b(x)-b(y)|}{|x-y|^n}|f(y)|dy\\
&\le& C\int_{\gamma(x)/M\le|x-y|\le
M\gamma(x)}\frac{|b(x)-b(y)|}{|x-y|^n}|f(y)|\chi_{\mathbb{Q}_k}(y)dy\\
&\le&\frac{C}{\gamma(x)^n}\int_{|y-x|\le
M\gamma(x)}|b(x)-b(y)||f(y)|\chi_{\mathbb{Q}_k}(y)dy,\,\,\,x\in
Q_k\,\,\,\rm{and}\,\,\,\varepsilon>0.
\end{eqnarray*}

From Lemma \ref{add1} we deduce that,
\begin{eqnarray}\label{VP2}
\int_{Q_k}\Big|\sup_{\varepsilon>0}
J_{1,k}(x,\varepsilon)\Big|^pdx&\le&C\int_{\mathbb{R}^n}\Big|T_{b,M}(f\chi_{\mathbb{Q}_k})(x)\Big|^pdx\nonumber\\
&\le&C\|b\|^p_{BMO_\theta(\gamma)}\int_{\mathbb{Q}_k}|f(y)|^pdy,\,\,\,k\in
\mathbb{N},
\end{eqnarray}
with a constant independent of $k$.

On the other hand, we have that, for $x\in Q_k$ and $\varepsilon>0$,
$$
J_{2,k}(x,\varepsilon)=\Big|\int_{|x-y|>\varepsilon}(b_k(x)-b_k(y))R_\ell(x-y)f(y)dy\Big|,
$$
where $b_k=b\chi_{\mathbb{Q}_k}$. According to Lemma \ref{add2},
there exists a function $\widetilde{b}_k\in BMO(\mathbb{R}^n)$ such
that $\widetilde{b}_k=b$, on $\mathbb{Q}_k$, and
$\|\widetilde{b}_k\|_{BMO(\mathbb{R}^n)}\le
C\|b\|_{BMO_\theta(\gamma)}$, where $C>0$ does not depend on $k\in
\mathbb{N}$. It follows that
$$
\sup_{\varepsilon>0} J_{2,k}(x,\varepsilon)=
C_{\widetilde{b}_k,\ell}^*(f\chi_{\mathbb{Q}_k})(x),\,\,\,x\in Q_k.
$$

Hence, we get
\begin{eqnarray}\label{VP3}
\int_{Q_k}\Big|\sup_{\varepsilon>0}
J_{2,k}(x,\varepsilon)\Big|^pdx&\le&C\int_{\mathbb{R}^n}\Big|C_{\widetilde{b}_k,\ell}^*(f\chi_{\mathbb{Q}_k}(x)\Big|^pdx\nonumber\\
&\le&C\|\widetilde{b}_k\|^p_{BMO(\mathbb{R}^n)}\int_{\mathbb{Q}_k}|f(y)|^pdy\nonumber\\
&\le&
C\|b\|^p_{BMO_\theta(\gamma)}\int_{\mathbb{Q}_k}|f(y)|^pdy,\,\,\,k\in
\mathbb{N}.
\end{eqnarray}

By combining (\ref{VP1}), (\ref{VP2}) and (\ref{VP3}) and using the
properties of the sequence $\{Q_k\}_{k\in \mathbb{N}}$ we conclude,
for every $1<p<\infty$,
\begin{eqnarray*}
\|C_{b,\ell}^{*,\rm{loc}}(f)\|_{L^p(\mathbb{R}^n)}&\le&\Big(\sum_{k=1}^\infty
\int_{Q_k}|C_{b,\ell}^{*,\rm{loc}}(f)(x)|^pdx\Big)^{1/p}\\
&\le& C\|b\|_{BMO_\theta(\gamma)}\Big(\sum_{k=1}^\infty
\int_{\mathbb{Q}_k}|f(y)|^pdy\Big)^{1/p}\\
&\le&C\|b\|_{BMO_\theta(\gamma)}\|f\|_{L^p(\mathbb{R}^n)},
\,\,\,f\in L^p(\mathbb{R}^n).
\end{eqnarray*}
\end{proof}

Suppose now that $\ell=1,\cdots,n$, $f\in C_c^\infty(\mathbb{R}^n)$
and $b\in BMO_\infty(\gamma)$. Then, $bf\in
L^1(\mathbb{R}^n)$. Moreover, we can write, for
$\varepsilon>0$ small enough,
$$
C_{b,\ell}^{\varepsilon,\rm{loc}}(f)(x)=C_{b,\ell}^{\varepsilon}(f)(x)-\int_{|x-y|\ge\gamma(x)}(b(x)-b(y))R_\ell(x-y)f(y)dy,\,\,\,
x\in \mathbb{R}^n.
$$
Hence, there exists the limit
$$
\lim_{\varepsilon\to
0^+}C_{b,\ell}^{\varepsilon,\rm{loc}}(f)(x),\,\,\,x\in \mathbb{R}^n.
$$

By using standard arguments, from Proposition \ref{add1} we deduce
that, for every $f\in L^p(\mathbb{R}^n)$, $1<p<\infty$, there exists
the limit
$$
\lim_{\varepsilon\to
0^+}C_{b,\ell}^{\varepsilon,\rm{loc}}(f)(x),\,\,\,a.e.\,\,\,x\in
\mathbb{R}^n.
$$
We define, for every $f\in L^p(\mathbb{R}^n)$, $1<p<\infty$,
$$
C_{b,\ell}^{\rm{loc}}(f)(x)=\lim_{\varepsilon\to
0^+}C_{b,\ell}^{\varepsilon,\rm{loc}}(f)(x),\,\,\,a.e.\,\,\,x\in
\mathbb{R}^n.
$$

The behavior in $L^p(\mathbb{R}^n)$ of the variation operators for
the family of truncations $\{C_{b,\ell}^{\varepsilon,
\rm{loc}}\}_{\varepsilon>0}$ associated with the local commutator
operator  $C_{b,\ell}^{\rm{loc}}$ are established in the following.

\begin{Th}\label{loccomm} Let $b\in BMO_\infty(\gamma)$ and
$\ell=1,\cdots,n$. Assume that $\rho>2$ . Then, the variation
operator $V_\rho(C_{b,\ell}^{\varepsilon, \rm{loc}})$ is bounded
from $L^p(\mathbb{R}^n)$ into itself, for every $1<p<\infty$.
\end{Th}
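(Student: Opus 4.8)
The plan is to adapt the argument of Proposition \ref{master}, replacing the maximal operator by the variation operator. I would assume $b\in BMO_\theta(\gamma)$, $\theta\ge 0$, and take $\{Q_k\}_{k\in\mathbb{N}}$, $Q_k=B(x_k,\gamma(x_k))$, to be the sequence of balls described above. As in Proposition \ref{master}, fix $L>0$, independent of $k$, so that $B(x,\gamma(x))\subset\mathbb{Q}_k:=B(x_k,L\gamma(x_k))$ for every $x\in Q_k$, and recall that $\gamma(y)\sim\gamma(x_k)$ for $y\in\mathbb{Q}_k$. By Lemma \ref{add2}, for each $k$ there is $\widetilde{b}_k\in BMO(\mathbb{R}^n)$ with $\widetilde{b}_k=b$ on $\mathbb{Q}_k$ and $\|\widetilde{b}_k\|_{BMO(\mathbb{R}^n)}\le C\|b\|_{BMO_\theta(\gamma)}$, $C$ independent of $k$.

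For $x\in Q_k$ and $\varepsilon>0$ I would decompose
\[
C_{b,\ell}^{\varepsilon,\rm{loc}}(f)(x)=C_{\widetilde{b}_k,\ell}^{\varepsilon}(f\chi_{\mathbb{Q}_k})(x)+D_k(x,\varepsilon),
\]
where $C_{\widetilde{b}_k,\ell}^{\varepsilon}$ denotes the classical truncated commutator associated with the $BMO(\mathbb{R}^n)$-function $\widetilde{b}_k$. Since $B(x,\gamma(x))\subset\mathbb{Q}_k$ and $\widetilde{b}_k=b$ on $\mathbb{Q}_k$, a direct computation should give
\[
D_k(x,\varepsilon)=-\int_{|x-y|>\max\{\varepsilon,\gamma(x)\},\;y\in\mathbb{Q}_k}(b(x)-b(y))R_\ell(x-y)f(y)\,dy,
\]
so the integration variable runs over an annulus $\gamma(x)\le|x-y|\le M\gamma(x)$, with $M$ independent of $k$ because $\gamma\sim\gamma(x_k)$ on $\mathbb{Q}_k$, i.e.\ well away from the diagonal. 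Given a decreasing sequence $\{\varepsilon_j\}\searrow0$, the increments $D_k(x,\varepsilon_j)-D_k(x,\varepsilon_{j+1})$ are integrals over pairwise disjoint annuli inside $\{\gamma(x)<|x-y|\le M\gamma(x)\}\cap\mathbb{Q}_k$; since $\rho\ge1$ the $\ell^\rho$ norm of the increments is bounded by their $\ell^1$ norm, which telescopes into a single integral over that annulus. Combined with $|R_\ell(x-y)|=|x-y|^{-n}\le\gamma(x)^{-n}$ there, this will give
\[
\|D_k(x,\cdot)\|_{E_\rho}\le C\,T_{b,M}(f\chi_{\mathbb{Q}_k})(x),\qquad x\in Q_k,
\]
with $T_{b,M}$ the operator of Lemma \ref{add1}; and by the seminorm property of $\|\cdot\|_{E_\rho}$,
\[
V_\rho(C_{b,\ell}^{\varepsilon,\rm{loc}})(f)(x)\le\|D_k(x,\cdot)\|_{E_\rho}+V_\rho(C_{\widetilde{b}_k,\ell}^{\varepsilon})(f\chi_{\mathbb{Q}_k})(x),\qquad x\in Q_k.
\]

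Then I would raise this inequality to the $p$-th power, integrate over $Q_k$ and sum in $k$. For the first term Lemma \ref{add1} yields $\int_{Q_k}T_{b,M}(f\chi_{\mathbb{Q}_k})^p\le C\|b\|_{BMO_\theta(\gamma)}^p\int_{\mathbb{Q}_k}|f|^p$; for the second, Theorem \ref{CVarBMO} --- more precisely, the sharp maximal function estimate obtained in its proof, which depends linearly on $\|\widetilde{b}_k\|_{BMO(\mathbb{R}^n)}$ --- together with Lemma \ref{add2} yields $\int_{Q_k}V_\rho(C_{\widetilde{b}_k,\ell}^{\varepsilon})(f\chi_{\mathbb{Q}_k})^p\le C\|b\|_{BMO_\theta(\gamma)}^p\int_{\mathbb{Q}_k}|f|^p$, both constants independent of $k$. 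Since $\bigcup_k Q_k=\mathbb{R}^n$ and the enlarged balls $\{\mathbb{Q}_k\}$ have the bounded overlap property (ii), summing in $k$ produces $\|V_\rho(C_{b,\ell}^{\varepsilon,\rm{loc}})(f)\|_{L^p(\mathbb{R}^n)}\le C\|b\|_{BMO_\theta(\gamma)}\|f\|_{L^p(\mathbb{R}^n)}$.

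The main obstacle will be the error term $D_k(x,\varepsilon)$: one has to verify that, after subtracting off the classical commutator with $\widetilde{b}_k$, the remainder is genuinely an integral over the annular region $\gamma(x)\lesssim|x-y|\lesssim\gamma(x)$, uniformly in $k$, so that its $E_\rho$-seminorm collapses to the pointwise bound handled by Lemma \ref{add1}. Everything else is the standard patching over $\{Q_k\}$ already performed in Proposition \ref{master}.
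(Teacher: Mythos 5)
Your proposal is correct and follows essentially the same strategy as the paper: patch over the balls $\{Q_k\}$, replace $b$ by the global $BMO$ extension $\widetilde{b}_k$ from Lemma \ref{add2} on $\mathbb{Q}_k$, and split $C_{b,\ell}^{\varepsilon,\rm{loc}}(f)$ into the classical truncated commutator $C_{\widetilde{b}_k,\ell}^\varepsilon(f\chi_{\mathbb{Q}_k})$ plus an error term supported in the annulus $\{\gamma(x)\lesssim|x-y|\lesssim\gamma(x)\}\cap\mathbb{Q}_k$, whose variation collapses via $\ell^\rho\hookleftarrow\ell^1$ and is then controlled by $T_{b,M}$ from Lemma \ref{add1}. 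The paper organizes the same split at the level of the increments inside the $\ell^\rho$-sum rather than as a pointwise decomposition of $C_{b,\ell}^{\varepsilon,\rm{loc}}(f)(x)$, but the estimates and the final patching over $\{Q_k\}$ using the bounded-overlap property and Theorem \ref{CVarBMO} are identical.
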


\begin{proof} Suppose that $f\in L^p(\mathbb{R}^n)$ with $1<p<\infty$. Let $k\in
\mathbb{N}$. As in the proof of Proposition \ref{master} we define
$\mathbb{Q}_k=B(x_k,L\gamma(x_k))$, where $L>0$ is such that
$B(x,\gamma(x))\subset \mathbb{Q}_k$, for every $x\in Q_k$. Morever,
$L$ does not depend on $k$.

For every $x\in Q_k$, we can write
\begin{eqnarray*}
&&V_\rho(C_{b,\ell}^{\varepsilon,
\rm{loc}})(f)(x)=\sup_{\{\varepsilon_j\}_{j\in \mathbb{N}}\downarrow
0}\Big(\sum_{j=1}^\infty \Big|\int_{\varepsilon_{j+1}\le
|x-y|<\varepsilon_j;|x-y|<\gamma(x)}(b(x)-b(y))R_\ell(x-y)f(y)dy\Big|^\rho\Big)^{1/\rho}\\
&\le&\sup_{\{\varepsilon_j\}_{j\in \mathbb{N}}\downarrow
0}\Big(\sum_{j=1}^\infty \Big|\int_{\varepsilon_{j+1}\le
|x-y|<\varepsilon_j;|x-y|<\gamma(x)}(b(x)-b(y))R_\ell(x-y)f(y)dy\\
&-&\int_{\varepsilon_{j+1}\le |x-y|<\varepsilon_j;y\in
\mathbb{Q}_k}(b(x)-b(y))R_\ell(x-y)f(y)dy\Big|^\rho\Big)^{1/\rho}\\
&+&\sup_{\{\varepsilon_j\}_{j\in \mathbb{N}}\downarrow
0}\Big(\sum_{j=1}^\infty \Big|\int_{\varepsilon_{j+1}\le
|x-y|<\varepsilon_j;y\in
\mathbb{Q}_k}(b(x)-b(y))R_\ell(x-y)f(y)dy\Big|^\rho\Big)^{1/\rho}\\
&\le&C\Big(\sup_{\{\varepsilon_j\}_{j\in \mathbb{N}}\downarrow
0}\Big(\sum_{j=1}^\infty \Big|\int_{\varepsilon_{j+1}\le
|x-y|<\varepsilon_j;y\in
\mathbb{Q}_k\setminus B(x,\gamma(x))}(b(x)-b(y))R_\ell(x-y)f(y)dy\Big|^\rho\Big)^{1/\rho}\\
&+&\sup_{\{\varepsilon_j\}_{j\in \mathbb{N}}\downarrow
0}\Big(\sum_{j=1}^\infty \Big|\int_{\varepsilon_{j+1}\le
|x-y|<\varepsilon_j}(b(x)-b(y))R_\ell(x-y)f(y)\chi_{\mathbb{Q}_k}(y)dy\Big|^\rho\Big)^{1/\rho}\Big)\\
&\le&C\Big(\sup_{\{\varepsilon_j\}_{j\in \mathbb{N}}\downarrow
0}\Big(\sum_{j=1}^\infty \int_{\varepsilon_{j+1}\le
|x-y|<\varepsilon_j;y\in
\mathbb{Q}_k\setminus B(x,\gamma(x))}|b(x)-b(y)|\frac{1}{|x-y|^{n}}|f(y)|dy\\
&+&\sup_{\{\varepsilon_j\}_{j\in \mathbb{N}}\downarrow
0}\Big(\sum_{j=1}^\infty \Big|\int_{\varepsilon_{j+1}\le
|x-y|<\varepsilon_j;}(\widetilde{b}_k(x)-\widetilde{b}_k(y))\frac{x_\ell-y_\ell}{|x-y|^{n+1}}f(y)\chi_{\mathbb{Q}_k}(y)dy\Big|^\rho\Big)^{1/\rho}\Big)\\
&\le&C \Big(\int_{\mathbb{Q}_k\setminus
B(x,\gamma(x))}|b(x)-b(y)|\frac{1}{|x-y|^{n}}|f(y)|dy+V_\rho(C_{\widetilde{b}_k,\ell}(f\chi_{\mathbb{Q}_k})(x)\Big),
\end{eqnarray*}
where $\widetilde{b}_k\in BMO(\mathbb{R}^n)$ satisfying that
$\widetilde{b}_k=b$, in $\mathbb{Q}_k$, and
$\|\mathfrak{b}_k\|_{BMO(\mathbb{R}^n)}\le
C\|b\|_{BMO_\theta(\gamma)}$, being $b\in BMO_\theta(\gamma)$. Here
$C>0$ is independent on $k$ (see Lemma \ref{add2}). Then, by using
Lemma \ref{add1} (as in the proof of Proposition \ref{master}) and
Theorem \ref{CVarBMO}, we conclude that
$$
\int_{Q_k}\Big|V_\rho(C_{b,\ell}^{\varepsilon,
\rm{loc}})(f)(x)\Big|^pdx\le
C\|b\|_{BMO_\theta(\gamma)}^p\int_{\mathbb{Q}_k}|f(x)|^pdx,
$$
where again $C>0$ does not depend on $k$.

Hence, according to the properties of the sequence $\{Q_k\}_{k\in
\mathbb{N}}$, we get
$$
\|V_\rho(C_{b,\ell}^{\varepsilon, \rm{loc}})(f)\|_{L^p(
\mathbb{R}^n)}\le C\|b\|_{BMO_\theta(\gamma)}\|f\|_{L^p(
\mathbb{R}^n)}.
$$
Thus the proof is finished.

\end{proof}

\section{Variation operators associated with the heat semigroup $\{W_t^\mathcal{L}\}_{t>0}$.}

In this section we present a proof of Theorem \ref{LVarLp}.
$L^p$-boundedness properties of the variation operators can be
showed in a similar way.

As the general procedure suggests we consider the following local operators
$$
W^{\mathcal{L}}_{t,{\rm loc}}(f)(x)= \int_{|x-y| < \gamma(x)} W^{\mathcal{L}}_t(x,y) f(y)dy,\;\; x \in \mathbb R^n,
$$
and
$$
W_{t,{\rm loc}}(f)(x)= \int_{|x-y| < \gamma(x)} W_t (x,y)f(y)dy,\;\; x \in \mathbb R^n,
$$
where $f \in L^p(\mathbb R^n)$, $1 \le p < \infty$.

Observe that
\begin{equation}\label{rel1}
V_\rho(W_t^{\mathcal{L}})(f) \leq V_\rho(W_{t,{\rm loc}}^{\mathcal{L}}-W_{t,{\rm loc}})(f) + V_\rho(W_{t,{\rm loc}})(f) + V_\rho(W_t^{\mathcal{L}}-W_{t,{\rm loc}}^{\mathcal{L}})(f).
\end{equation}

Assume that $\{t_j\}_{j\in \mathbb{N}}$ is a real decreasing sequence that converges to zero. We can write
\begin{multline*}
\left(\sum_{j=0}^\infty \left|W_{t_j,{\rm loc}}(f)(x) - W_{t_{j+1},{\rm loc}}(f)(x)\right|^\rho\right)^{\frac{1}{\rho}} \\
\shoveleft{\hspace{1.8cm}\leq\left(\sum_{j=0}^\infty \left|W_{t_j}(f)(x) - W_{t_{j+1}}(f)(x)\right|^\rho\right)^{\frac{1}{\rho}} }\\
\shoveleft{\hspace{2.1cm}+\left(\sum_{j=0}^\infty \left|\int_{|x-y|> \gamma(x)}\left(W_{t_j}(x,y) - W_{t_{j+1}}(x,y)\right)f(y)dy\right|^\rho\right)^{\frac{1}{\rho}}} \\
\leq V_\rho(W_t)(f)(x) + \sup_{\varepsilon > 0} \left\|\int_{|x-y|> \varepsilon}W_t(x,y)f(y) dy\right\|_{E_\rho},\quad x\in \mathbb{R}^n,\hspace{25mm}
\end{multline*}
where the space $E_\rho$ is defined in Section 1.

Then,
$$
V_\rho(W_{t,{\rm loc}})(f)(x) \leq V_\rho(W_t)(f) + \sup_{\varepsilon > 0}\left\|\int_{|x-y|> \varepsilon} W_t(x,y)f(y)dy \right\|_{E_\rho}, \quad x\in \mathbb{R}^n.
$$
We consider the operator defined by
$$
\begin{array}{rccl}
T:& L^2(\mathbb R^n) & \rightarrow & L^2_{E_\rho}(\mathbb R^n) \\
  & f  & \longrightarrow & Tf(x)=\displaystyle\int_{\mathbb R^n}W_t(x,y)f(y) dy.
  \end{array}
$$
According to \cite[Theorem 3.3]{JR} $T$ is bounded from $L^2(\mathbb R^n)$ into $L^2_{E_\rho}(\mathbb R^n)$. Moreover, $T$ is a Calder\'on-Zygmund operator associated with the $E_\rho$-valued kernel
$$
K(x,y;t)= W_t(x,y), \;\;x,y \in \mathbb R^n,\;\;t >0,
$$
that satisfies the following properties (see \cite{CMMTV}):
\begin{enumerate}
\item $ \displaystyle\|K(x,y;\cdot)\|_{E_\rho} \leq \frac{C}{|x-y|^n}$, $x,y \in \mathbb R^n$, $x \neq y$,
\item $ \displaystyle\left\|\frac{\partial}{\partial x} K(x,y;\cdot)\right\|_{E_\rho} + \left\|\frac{\partial}{\partial y} K(x,y;\cdot)\right\|_{E_\rho} \leq \frac{C}{|x-y|^{n+1}}$, $x,y \in \mathbb R^n$, $x \neq y$.
\end{enumerate}
Then, by proceeding as in the proof of \cite[Proposition 2, p. 34 and Corollary 2, p. 36]{Stein1}, we prove that the maximal operator $T^*$ defined by
$$
T^*f(x)= \sup_{\varepsilon > 0} \left\|\int_{|x-y|>\varepsilon} W_t(x,y) f(y) dy\right\|_{E_\rho}
$$
is bounded from $L^p(\mathbb R^n)$ into itself, for every $1<p<\infty$, and from $L^1(\mathbb R^n)$ into $L^{1,\infty}(\mathbb R^n)$. By combining this fact with \cite[Theorem 3.3]{JR} we conclude that the operator $V_\rho(W_{t,{\rm loc}})$ is bounded from $L^p(\mathbb R^n)$ into itself, for every $1<p<\infty$, and from $L^1(\mathbb R^n)$ into $L^{1,\infty}(\mathbb R^n)$.

We consider now the variation operator defined by
$$
V_\rho\left(W^{\mathcal{L}}_t - W^{\mathcal{L}}_{t,{\rm loc}}\right)(f)(x) = \sup_{\{t_j\}_{j\in \mathbb{N}} \searrow 0} \left(\sum_{j=0}^\infty \left|\int_{|x-y|>\gamma(x)}
\left(W^{\mathcal{L}}_{t_j}(x,y) - W^{\mathcal{L}}_{t_{j+1}}(x,y)\right) f(y) dy \right|^\rho \right)^{\frac{1}{\rho}}.
$$
Assume that $\{t_j\}_{j\in \mathbb{N}}$ is a real decreasing sequence that converges to zero. We can write
\begin{multline}\label{D1}
\left(\sum_{j=0}^\infty \left|\int_{|x-y|>\gamma(x)} \left( W^{\mathcal{L}}_{t_j}(x,y) - W^{\mathcal{L}}_{t_{j+1}}(x,y)\right)f(y)dy\right|^\rho\right)^{\frac{1}{\rho}} \\
\shoveleft{\hspace{20mm}\leq \sum^\infty_{j=0}\int_{|x-y|>\gamma(x)}|f(y)|\int_{t_{j+1}}^{t_j} \left|\frac{\partial}{\partial t} W^{\mathcal{L}}_{t}(x,y)\right|dt dy}\\
= \int_{|x-y|>\gamma(x)}|f(y)|\int_0^\infty \left|\frac{\partial}{\partial t} W^{\mathcal{L}}_{t}(x,y)\right|dt dy, \;\; x \in \mathbb R^n.\hspace{31mm}
\end{multline}
According to \cite[(2.7)]{DGMTZ}, for every $N \in \mathbb N$ there exist $c,C > 0$ such that
\begin{equation}\label{D2}
\left|\frac{\partial}{\partial t} W^{\mathcal{L}}_{t}(x,y)\right| \leq \frac{C}{t^{\frac{n}{2}+1}}\left(1+\frac{t}{\gamma(x)^2}+\frac{t}{\gamma(y)^2}\right)^{-N}e^{\frac{-c|x-y|^2}{t}}, \;\; x,y\in \mathbb R^n, \;\;t>0.
\end{equation}
Estimation (\ref{D2}) allows us to obtain
\begin{multline}\label{D3}
\int_{|x-y|>\gamma(x)}\int_{\gamma(x)^2}^\infty \left|\frac{\partial}{\partial t} W^{\mathcal{L}}_{t}(x,y)\right|dt|f(y)|dy \\
\shoveleft{\hspace{28mm}\leq C\int_{|x-y|>\gamma(x)}|f(y)|\int_{\gamma(x)^2}^\infty \frac{1}{t^{\frac{n}{2}+1}}\left(1+\frac{t}{\gamma(x)^2}\right)^{-n-2}e^{-\frac{c|x-y|^2}{t}}dt dy}\\
\shoveleft{\hspace{28mm}\leq \frac{C}{\gamma(x)^n}\int_{|x-y|>\gamma(x)}|f(y)|\int_1^\infty \frac{1}{u^{\frac{n}{2}+1}}\frac{1}{(1+u)^{n+2}}e^{-\frac{c|x-y|^2}{u\gamma(x)^2}}du  dy} \\
\shoveleft{\hspace{28mm}\leq  \frac{C}{\gamma(x)^n}\int_{|x-y|>\gamma(x)}|f(y)|\int_1^\infty \frac{1}{u^{\frac{n}{2}+1}}\frac{1}{(1+u)^{2+n}}\left(\frac{u\gamma(x)^2}{|x-y|^2}\right)^{\frac{n}{2}+1}dudy} \\
\shoveleft{\hspace{28mm}\leq \frac{C}{\gamma(x)^n}\int_{|x-y|>\gamma(x)}|f(y)|\left(\frac{\gamma(x)}{|x-y|}\right)^{n+2}dy} \\
\shoveleft{\hspace{28mm}= \frac{C}{\gamma(x)^n}\sum_{k=0}^\infty \int_{2^k\gamma(x) < |x-y| \leq 2^{k+1}\gamma(x)}|f(y)|\left(\frac{\gamma(x)}{|x-y|}\right)^{n+2}dy} \\
 \leq C\sum_{k=0}^\infty \frac{1}{2^{2k}(2^k\gamma(x))^n}\int_{|x-y| \leq 2^{k+1}\gamma(x)}|f(y)|dy \leq C\mathcal{M}(f)(x), \;\;x \in \mathbb R^n.
\end{multline}
Here $\mathcal{M}=\mathcal{M}_1$ denotes the Hardy-Littlewood maximal function.

Also we have that
\begin{multline}\label{D4}
\int_{|x-y|> \gamma(x)}\int_0^{\gamma(x)^2}\left|\frac{\partial}{\partial t}W_t^\mathcal{L}(x,y)\right|dt |f(y)|dy \\
\shoveleft{\hspace{45mm}\leq C\int_0^{\gamma(x)^2}\int_{|x-y|>\gamma(x)}\frac{1}{t^{\frac{n}{2}+1}}e^{-c\frac{|x-y|^2}{t}}|f(y)|dydt}\\
\shoveleft{\hspace{45mm}\leq C \int_0^{\gamma(x)^2}\frac{e^{-c\frac{\gamma(x)^2}{t}}}{t}\int_{\mathbb R^n}\frac{1}{t^{\frac{n}{2}}}e^{-c\frac{|x-y|^2}{t}}|f(y)|dydt}\\
\leq C \sup_{t>0}\frac{1}{t^{\frac{n}{2}}}\int_{\mathbb R^n} e^{-c\frac{|x-y|^2}{t}}|f(y)|dy \leq C \mathcal{M}(f)(x),\;\; x \in \mathbb R^n.
\end{multline}
From (\ref{D1}), (\ref{D3}) and (\ref{D4}) we conclude that
$$
V_\rho\left(W_t^{\mathcal{L}}-W_{t,{\rm loc}}^{\mathcal{L}}\right)(f)(x) \leq C\mathcal{M}(f)(x),\;\; x \in \mathbb R^n.
$$
Hence, the operator $V_\rho\left(W_t^{\mathcal{L}}-W_{t,{\rm loc}}^{\mathcal{L}}\right)$ is bounded from $L^p(\mathbb R^n)$ into itself, for every $1<p<\infty$, and from $L^1(\mathbb R^n)$ into $L^{1,\infty}(\mathbb R^n)$.
We now analyze the operator
\begin{multline*}
V_\rho\left(W_{t,{\rm loc}}^{\mathcal{L}}-W_{t,{\rm loc}}\right)(f)(x)= \sup_{\{t_j\}_{j\in \mathbb{N}} \searrow 0} \left(\sum_{j=0}^\infty \left|\int_{|x-y| < \gamma(x)} \left(W_{t_j}^{\mathcal{L}}(x,y)-W_{t_j}(x,y)\right.\right.\right.\\
\left.\left.\left.- \left(W_{t_{j+1}}^{\mathcal{L}}(x,y) - W_{t_{j+1}}(x,y)\right)\right)f(y) dy \right|^\rho\right)^{\frac{1}{\rho}},\quad x\in \mathbb{R}^n.
\end{multline*}
Let us take a real decreasing sequence $\{t_j\}_{j\in \mathbb{N}}$ that converges to zero. We have that
\begin{multline}\label{D5}
\left(\sum_{j=0}^\infty \left|\int_{|x-y| < \gamma(x)} \left(W_{t_j}^{\mathcal{L}}(x,y)-W_{t_j}(x,y)- \left(W_{t_{j+1}}^{\mathcal{L}}(x,y) -
W_{t_{j+1}}(x,y)\right)\right)f(y) dy \right|^\rho\right)^{\frac{1}{\rho}} \\
\shoveleft{\hspace{30mm}= \left( \sum_{j=0}^\infty \left|\int_{|x-y| < \gamma(x)}f(y) \int_{t_{j+1}}^{t_j}\frac{\partial}{\partial t}\left(W_t^{\mathcal{L}}(x,y)- W_t(x,y) \right)dt dy \right|^\rho\right)^{\frac{1}{\rho}}} \\
\shoveleft{\hspace{30mm}\leq \sum_{j=0}^\infty \int_{|x-y|<\gamma(x)}|f(y)|\int_{t_{j+1}}^{t_j}\left|\frac{\partial}{\partial t}\left(W_t^{\mathcal{L}}(x,y)- W_t(x,y)\right)\right|dtdy} \\
\shoveleft{\hspace{30mm}= \int_{|x-y|<\gamma(x)}|f(y)|\int_0^{\gamma(x)^2}\left|\frac{\partial}{\partial t}\left(W_t^{\mathcal{L}}(x,y) - W_t(x,y)\right)\right|dtdy}\\
\shoveleft{\hspace{34mm}+\int_{|x-y| < \gamma(x)}|f(y)|\int_{\gamma(x)^2}^\infty \left|\frac{\partial}{\partial t}\left(W_t^{\mathcal{L}}(x,y) - W_t(x,y)\right)\right|dt dy}  \\
 = I_1(f)(x) + I_2(f)(x), \;\;\;\;\; x \in \mathbb R^n. \hspace{51mm}
\end{multline}
Note firstly that, according to (\ref{D2}) and the known estimates for $\displaystyle \frac{\partial}{\partial t}W_t$, for certain $C,c >0$, we get
\begin{eqnarray}\label{D6}
I_2(f)(x) &\leq & C\int_{|x-y| < \gamma(x)} |f(y)|\int_{\gamma(x)^2}^\infty \frac{e^{-c\frac{|x-y|^2}{t}}}{t^{\frac{n}{2}+1}}dtdy \nonumber\\
&\leq & C\int_{|x-y|<\gamma(x)}|f(y)|\int_{\gamma(x)^2}^\infty \frac{dt}{t^{\frac{n}{2}+1}}dy \nonumber \\
&\leq & \frac{C}{\gamma(x)^n}\int_{|x-y|<\gamma(x)}|f(y)|dy \leq C\mathcal{M}(f)(x), \;\;\; x \in \mathbb R^n.
\end{eqnarray}
On the other hand, the perturbation formula (\cite[(5.25)]{DGMTZ}) leads to
\begin{eqnarray*}
\frac{\partial}{\partial t} \left(W_t(x,y)-W_t^{\mathcal{L}}(x,y)\right) &=& \int_{\mathbb R^n}V(z) W_{\frac{t}{2}}(x,z) W_{\frac{t}{2}}^{\mathcal{L}}(z,y)dz \\
&& + \int_0^{\frac{t}{2}} \int_{\mathbb R^n} V(z) \frac{\partial}{\partial t}W_{t-s}(x,z)W_s^{\mathcal{L}}(z,y) dzds \\
&& + \int_{\frac{t}{2}}^t\int_{\mathbb R^n}W_{t-s}(x,z)V(z) \frac{\partial}{\partial s}W_s^{\mathcal{L}}(z,y) dzds \\
&=& K_1 (x,y,t) + K_2(x,y,t) + K_3(x,y,t), \;\; x,y \in \mathbb R^n \;\; \mbox{and} \;\; t>0.
\end{eqnarray*}
Hence $I_1(f) = T_1(f) + T_2(f)+T_3(f)$, where, for $m=1,2,3$,
$$
T_mf(x)= \int_{|x-y|< \gamma(x)} |f(y)|\int_0^{\gamma(x)^2}|K_m(x,y,t)|dtdy,\;\;\; x \in \mathbb R^n.
$$
By \cite[(2.2) and (2.8)]{DGMTZ}, we obtain
\begin{eqnarray*}
\int_0^{\gamma(x)^2}|K_1(x,y,t)|dt & \leq &C\int_0^{\gamma(x)^2}\int_{\mathbb R^n}V(z) \frac{1}{t^{\frac{n}{2}}}e^{-\frac{|x-y|^2}{2t}}\frac{1}{t^{\frac{n}{2}}}e^{-\frac{|z-y|^2}{2t}}dz dt\\
&\leq & C \int_0^{\gamma(x)^2}\frac{1}{t^{\frac{n}{2}}}e^{-\frac{|x-y|^2}{4t}}\int_{\mathbb R^n}\frac{1}{t^{\frac{n}{2}}}e^{-\frac{|x-z|^2}{4t}}V(z)dzdt\\
&\leq& C\int_0^{\gamma(x)^2}\frac{1}{t^{\frac{n}{2}+1}}e^{-\frac{|x-y|^2}{4t}}\left(\frac{\sqrt{t}}{\gamma(x)}\right)^\delta dt,
\end{eqnarray*}
for a certain $\delta>0$. Then,
\begin{eqnarray}\label{D7}
|T_1(f)(x)| &\leq &C \int_{|x-y|< \gamma(x)}|f(y)|\int_0^{\gamma(x)^2}\frac{1}{t^{\frac{n}{2}+1}}e^{-\frac{|x-y|^2}{4t}}\left(\frac{\sqrt{t}}{\gamma(x)}\right)^\delta dt dy \nonumber\\
&\leq& C \int_0^{\gamma(x)^2}\frac{t^{-1+\frac{\delta}{2}}}{\gamma(x)^\delta}\frac{1}{t^{\frac{n}{2}}}\int_{\mathbb R^n}|f(y)|e^{-\frac{|x-y|^2}{4t}}dydt\nonumber\\
&\leq&C \sup_{t>0}\frac{1}{t^{\frac{n}{2}}}\int_{\mathbb R^n}|f(y)|e^{-\frac{|x-y|^2}{4t}}dy \leq C\mathcal{M}(f)(x),\;\;x \in \mathbb R^n.
\end{eqnarray}
Also, since $\frac{t}{2}< t-s < t$ provided that $0< s < \frac{t}{2}$, \cite[(2.2) and (2.8)]{DGMTZ} imply that, for some $0< c < \frac{1}{4}$,
\begin{eqnarray*}
\int_0^{\gamma(x)^2}|K_2(x,y,t)|dt & \leq & C \int_0^{\gamma(x)^2}\int_0^{\frac{t}{2}}\int_{\mathbb R^n}V(z) \frac{1}{(t-s)^{\frac{n}{2}+1}}e^{-c\frac{|x-z|^2}{t-s}}\frac{1}{s^{\frac{n}{2}}}e^{-\frac{|y-z|^2}{4s}}dzdsdt \\
&\leq & C \int_0^{\gamma(x)^2}\frac{1}{t^{\frac{n}{2}+1}}\int_0^{\frac{t}{2}}\int_{\mathbb R^n}V(z) e^{-c\frac{|x-z|^2}{t}}\frac{1}{s^{\frac{n}{2}}}e^{-\frac{|y-z|^2}{4s}}dzdsdt \\
& \leq & C \int_0^{\gamma(x)^2}\frac{1}{t^{\frac{n}{2}+1}}\int_0^{\frac{t}{2}}\int_{\mathbb R^n}
e^{-\frac{c}{2}\frac{|x-z|^2+|y-z|^2}{t}}V(z)\frac{1}{s^{\frac{n}{2}}}e^{-\frac{c}{2}\frac{|y-z|^2}{s}}dzdsdt\\
&\leq & C\int_0^{\gamma(x)^2}\frac{1}{t^{\frac{n}{2}+1}}\int_0^{\frac{t}{2}} e^{-\frac{c}{2}\frac{|x-y|^2}{t}} \int_{\mathbb R^n}
\frac{1}{s^{\frac{n}{2}}} e^{-\frac{c}{2}\frac{|y-z|^2}{s}} V(z) dzdsdt \\
& \leq & C \int_0^{\gamma(x)^2}\frac{1}{t^{\frac{n}{2}+1}}e^{-\frac{c}{2}\frac{|x-y|^2}{t}}\int_0^{\frac{t}{2}} \frac{s^{-1+\frac{\delta}{2}}}{\gamma(y)^\delta}dsdt \\
&\leq & \frac{C}{\gamma(x)^\delta}\int_0^{\gamma(x)^2}\frac{1}{t^{\frac{n}{2}+1-\frac{\delta}{2}}}e^{-\frac{c}{2t}|x-y|^2} dt, \;\;\; x,y \in \mathbb R^n,\,\,\,|x-y|\le \gamma(x).
\end{eqnarray*}
We have taken into account that $\gamma(x)\thicksim\gamma(y)$, provided that $|x-y|\le \gamma(x)$. Then,
\begin{eqnarray}\label{D8}
|T_2f(x)| &\leq & \frac{C}{\gamma(x)^\delta}\int_{|x-y|< \gamma(x)}|f(y)|\int_0^{\gamma(x)^2}\frac{e^{-\frac{c}{2t}|x-y|^2}}{t^{\frac{n}{2}+1-\frac{\delta}{2}}}dtdy \nonumber\\
&\leq & \frac{C}{\gamma(x)^\delta}\int_0^{\gamma(x)^2}\frac{dt}{t^{1-\frac{\delta}{2}}}\int_{\mathbb R^n}
\frac{1}{t^{\frac{n}{2}}}e^{-\frac{c}{2t}|x-y|^2}|f(y)|dydt \nonumber\\
& \leq & C \sup_{t>0}\frac{1}{t^{\frac{n}{2}}}\int_{\mathbb R^n}e^{-\frac{c}{2t}|x-y|^2}|f(y)|dy \leq  C\mathcal{M}(f)(x), \;\; x \in \mathbb R^n.
\end{eqnarray}
By proceeding in a similar way and using \cite[(2.7)]{DGMTZ}we see that
\begin{equation}\label{D9}
|T_3f(x)|\leq C\mathcal{M}(f)(x), \;\;\; x\in \mathbb R^n.
\end{equation}

From (\ref{D7}), (\ref{D8}) and (\ref{D9}), we get
\begin{equation}\label{D10}
I_1(f)(x) \leq C\mathcal{M}(f)(x), \;\;\; x \in \mathbb R^n.
\end{equation}
Inequalities (\ref{D6}) and (\ref{D10}) imply that
$$
V_\rho(W_{t,{\rm loc}}^{\mathcal{L}} - W_{t,{\rm loc}})(f)(x) \leq C\mathcal{M}(f)(x),\;\;\; x \in \mathbb R^n.
$$
Hence, the operator $V_\rho(W_{t,{\rm loc}}^{\mathcal{L}}-W_{t,{\rm loc}})$ is bounded from $L^p(\mathbb R^n)$ into itself, for every $1<p<\infty$, and from $L^1(\mathbb R^n)$ into $L^{1,\infty}(\mathbb R^n)$.

Finally, by \eqref{rel1} we conclude that $V_\rho(W_t^{\mathcal{L}})$ is a bounded operator from $L^p(\mathbb R^n)$ into itself, for every $1<p<\infty$, and from $L^1(\mathbb R^n)$ into $L^{1,\infty}(\mathbb R^n)$.

\section{Variation operators associated with Riesz transform $R^\mathcal{L}$}

In this section we prove Proposition \ref{Pvriesz} and Theorem \ref{VarRiesz}. As it was mentioned in Section 1, for every $\ell =1,\cdots,n$, the $\ell $-th Riesz transform associated with $\mathcal{L}$ is defined formally by
\begin{equation}\label{D11}
R^{\mathcal{L}}_\ell  = \frac{\partial}{\partial x_\ell }\mathcal{L}^{-\frac{1}{2}}.
\end{equation}
Here $\mathcal{L}^{-\frac{1}{2}}$ denotes the negative square root of the operator $\mathcal{L}$ given by
$$
\mathcal{L}^{-\frac{1}{2}}f(x) = -\frac{1}{2\pi}\int_{\mathbb R^n}\int_{-\infty}^{+\infty}(-i\tau)^{-\frac{1}{2}}\Gamma(x,y,\tau)d\tau f(y) dy,
$$
where $\Gamma(x,y,\tau)$ represents the fundamental solution for the
operator $\mathcal{L}+i\tau$, with $\tau \in \mathbb R$ (see
\cite[\S2]{Sh}).

We recall that, for every $\ell=1,\cdots,n$, the function $R_\ell^\mathcal{L}(x,y)$ is defined by
$$
R_\ell^\mathcal{L}(x,y)=   -\frac{1}{2\pi}\int_{-\infty}^{+\infty}(-i\tau)^{-\frac{1}{2}}\frac{\partial}{\partial x_\ell}\Gamma(x,y,\tau)d\tau,\,\,\,x,y\in \mathbb{R}^n,\,\,\,x\neq y.
$$

The following estimates for the kernels $R_\ell^\mathcal{L}$,
$\ell=1,\cdots,n$, were established in \cite[Sections 5 and 6]{Sh}
(see also \cite[Lemma 1]{BHS2}) and will be very useful in the
sequel.

\begin{Lema} \label{Lemaesti} Let $\ell=1,\cdots,n$ and $V\in B_q$.

(i) Assume that $q>n$. Then, for every $k\in \mathbb{N}$ there exists $C>0$ such that
\begin{equation}\label{L1}
|R_\ell^\mathcal{L}(x,y)|\le C\frac{1}{(1+|x-y|/\gamma(x))^k}\frac{1}{|x-y|^n}.
\end{equation}
Moreover, we have
\begin{equation}\label{L2}
|R_\ell^\mathcal{L}(x,y)-R_\ell(x-y)|\le
C\frac{1}{|x-y|^n}\Big(\frac{|x-y|}{\gamma(x)}\Big)^{2-n/q},\,\,\,0<|x-y|<\gamma(x).
\end{equation}

(ii) Suppose that $n/2<q<n$. Then, for every $k\in \mathbb{N}$ there exists $C>0$ such that
\begin{eqnarray}\label{L3}
|R_\ell^\mathcal{L}(x,y)|&\le& C\frac{1}{(1+|x-y|/\gamma(y))^k}\frac{1}{|x-y|^{n-1}}\nonumber\\
&\times&\Big(\frac{1}{|x-y|}+\int_{B(x,|x-y|/4)}\frac{V(z)}{|z-x|^{n-1}}dz\Big).
\end{eqnarray}
Also, we have
\begin{eqnarray}\label{L4}
|R_\ell^\mathcal{L}(x,y)-R_\ell(x-y)|&\le& C\frac{1}{|x-y|^{n-1}}\nonumber\\
&\times&\Big(\frac{1}{|x-y|}\Big(\frac{|x-y|}{\gamma(x)}\Big)^{2-n/q}+\int_{B(x,|x-y|/4)}\frac{V(z)}{|z-x|^{n-1}}dz\Big),\,\,\,0<|x-y|<\gamma(x).
\end{eqnarray}
\end{Lema}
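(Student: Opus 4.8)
Inequalities \eqref{L1}--\eqref{L4} are essentially due to Shen \cite[Sections 5 and 6]{Sh} (see also \cite[Lemma 1]{BHS2}), so I only describe the plan. Set $m(x)=1/\gamma(x)$, $x\in\mathbb{R}^n$. The starting point is the family of pointwise bounds for the fundamental solution $\Gamma(x,y,\tau)$ of $\mathcal{L}+i\tau$ obtained by Shen: for every $N\in\mathbb{N}$ there is $C_N>0$ with
\[
|\Gamma(x,y,\tau)|\le \frac{C_N}{\bigl(1+|x-y|(m(x)+|\tau|^{1/2})\bigr)^N}\frac{1}{|x-y|^{n-2}},\qquad x\neq y,\ \tau\in\mathbb{R},
\]
together with the companion bounds for $\partial_{x_\ell}\Gamma$: when $V\in B_q$ with $q>n$ one has the same shape of estimate with $|x-y|^{n-2}$ replaced by $|x-y|^{n-1}$, whereas for $n/2<q<n$ the gradient bound carries in addition the term $\int_{B(x,|x-y|/4)}V(z)|x-z|^{1-n}\,dz$, which is exactly what survives in \eqref{L3}--\eqref{L4}.

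For the size estimates \eqref{L1} and \eqref{L3} the plan is to insert these bounds into
\[
R^{\mathcal{L}}_\ell(x,y)=-\frac{1}{2\pi}\int_{\mathbb{R}}(-i\tau)^{-1/2}\frac{\partial}{\partial x_\ell}\Gamma(x,y,\tau)\,d\tau
\]
and carry out the elementary $\tau$-integration. After the substitution $s=|\tau|^{1/2}$ the relevant integral reduces to $\int_0^\infty\bigl(1+|x-y|(m(x)+s)\bigr)^{-N}\,ds$, which is $\lesssim |x-y|^{-1}\bigl(1+|x-y|/\gamma(x)\bigr)^{-(N-1)}$; multiplying by $|x-y|^{-(n-1)}$ gives $|x-y|^{-n}\bigl(1+|x-y|/\gamma(x)\bigr)^{-k}$ with $k=N-1$ arbitrary, i.e. \eqref{L1}, and likewise \eqref{L3} after integrating the $V$-term in $\tau$ as well.

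For the difference estimates \eqref{L2} and \eqref{L4} I would use the perturbation identity $\Gamma(x,y,\tau)-\Gamma_0(x-y,\tau)=-\int_{\mathbb{R}^n}\Gamma_0(x-z,\tau)V(z)\Gamma(z,y,\tau)\,dz$, where $\Gamma_0$ is the fundamental solution of $-\Delta+i\tau$, then differentiate in $x_\ell$, integrate against $(-i\tau)^{-1/2}\,d\tau$, and estimate the resulting double integral with the bounds above, using $\gamma(x)\sim\gamma(z)$ on the relevant region when $|x-y|<\gamma(x)$. The gain $(|x-y|/\gamma(x))^{2-n/q}$ is then produced by Shen's control of the fractional integrals of $V$ (such as $\int_{B(x,r)}V(z)|x-z|^{2-n}\,dz$ for $r\le\gamma(x)$) in terms of $\gamma$, which in turn rests on the reverse H\"older inequality for $V$ and on the properties of $\gamma$ collected in \cite[Section 1]{Sh}. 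I expect this last point — passing from the membership $V\in B_q$ to these $\gamma$-weighted bounds on fractional integrals of $V$, uniformly in $\tau$ — to be the main technical obstacle; once it is in hand the remaining steps are routine.
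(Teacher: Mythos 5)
The paper does not prove this lemma at all: it simply cites \cite[Sections 5 and 6]{Sh} and \cite[Lemma 1]{BHS2} and takes the bounds as given (the very estimates you invoke on $\Gamma$ and $\partial_{x_\ell}\Gamma$ are later quoted by the paper in the proof of Proposition~\ref{Pvriesz} as (\ref{PI3})--(\ref{PI7})). Your sketch is a faithful reconstruction of Shen's argument — the $\tau$-integration after $s=|\tau|^{1/2}$ giving $\int_0^\infty(1+|x-y|(m(x)+s))^{-N}\,ds\lesssim |x-y|^{-1}(1+|x-y|/\gamma(x))^{1-N}$, and the perturbation identity for the difference estimates — so you are taking the same route the paper implicitly relies on.
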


\begin{Rm} Note that, according to \cite{G}, if $V\in B_n$, there exists $\varepsilon>0$ such that $V\in B_{n+\varepsilon}$. Then, the estimates in Lemma \ref{Lemaesti}, $(i)$, can be applied to $V\in B_n$ by taking $q=n+\varepsilon$, where $\varepsilon>0$ is small enough and it depends on $V$.
\end{Rm}

We consider, for every $\ell=1,\cdots,n$ and $\varepsilon>0$,
$$
R_\ell^{\mathcal{L},\varepsilon}(f)(x)=\int_{|x-y|>\varepsilon}R_\ell^\mathcal{L}(x,y)f(y)dy,\,\,\,x\in \mathbb{R}^n.
$$
According to Lemma \ref{Lemaesti}, it is not hard to see that, for every $\ell=1,\cdots,n$, $\varepsilon>0$, $f\in L^p(\mathbb{R}^n)$, $1\le p<\infty$, the integral defining $R_\ell^{\mathcal{L},\varepsilon}(f)(x)$ is absolutely convergent for each $x\in \mathbb{R}^n$.

Before proving Proposition \ref{Pvriesz} we establish the $L^p$-boundedness properties of the maximal operator $R_\ell^{\mathcal{L},*}$ defined by
$$
R_\ell^{\mathcal{L},*}(f)=\sup_{\varepsilon>0}|R_\ell^{\mathcal{L},\varepsilon}(f)|.
$$

\begin{Prop} \label{maxRiesz} Let $\ell =1,\cdots,n$ and $V\in B_q$, $q\ge n/2$. Then, if $1<p<p_0$, where $\frac{1}{p_0}=\Big(\frac{1}{q}-\frac{1}{n}\Big)_+$, $R_\ell ^{\mathcal{L},*}$ is bounded from $L^p(\mathbb R^n)$ into itself. Moreover, $R_\ell ^{\mathcal{L},*}$ is bounded from $L^1(\mathbb R^n)$ into $L^{1,\infty}(\mathbb R^n)$.
\end{Prop}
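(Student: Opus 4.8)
The plan is to apply the local/global comparison method of Section~2, using as the classical counterpart of $R^{\mathcal{L}}_\ell$ the truncated Riesz transform $R_\ell^{\varepsilon}$. Fix $f\in L^p(\mathbb{R}^n)$ with $1\le p<p_0$ (the strong bound being sought for $p>1$, the weak bound for $p=1$). For $x\in\mathbb{R}^n$ and $\varepsilon>0$ I split the region of integration defining $R^{\mathcal{L},\varepsilon}_\ell(f)(x)$ at $|x-y|=\gamma(x)$. When $\varepsilon\ge\gamma(x)$ the whole integral is ``global'' and is bounded in absolute value by
$$
\mathcal{G}(f)(x):=\int_{|x-y|>\gamma(x)}|R^{\mathcal{L}}_\ell(x,y)|\,|f(y)|\,dy .
$$
When $\varepsilon<\gamma(x)$, on the set $\varepsilon<|x-y|<\gamma(x)$ I write $R^{\mathcal{L}}_\ell(x,y)=R_\ell(x-y)+(R^{\mathcal{L}}_\ell(x,y)-R_\ell(x-y))$ and use $\int_{\varepsilon<|x-y|<\gamma(x)}R_\ell(x-y)f(y)\,dy=R^{\varepsilon}_\ell(f)(x)-R^{\gamma(x)}_\ell(f)(x)$. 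In both cases this yields the pointwise estimate
$$
R^{\mathcal{L},*}_\ell(f)(x)\le 2\,R^{*}_\ell(f)(x)+\mathcal{D}(f)(x)+\mathcal{G}(f)(x),\qquad \mathcal{D}(f)(x):=\int_{|x-y|<\gamma(x)}|R^{\mathcal{L}}_\ell(x,y)-R_\ell(x-y)|\,|f(y)|\,dy,
$$
where $R^{*}_\ell$ denotes the classical maximal truncated Riesz transform, which is bounded on $L^p(\mathbb{R}^n)$ for $1<p<\infty$ and of weak type $(1,1)$ (see, e.g., \cite{Stein1}). It thus remains to control $\mathcal{D}$ and $\mathcal{G}$.

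Suppose first that $V\in B_q$ with $q\ge n$ (using the Remark after Lemma~\ref{Lemaesti} when $q=n$, we may assume $q>n$). Plugging estimate \eqref{L1} into $\mathcal{G}$ and estimate \eqref{L2} into $\mathcal{D}$, decomposing $\{|x-y|>\gamma(x)\}$ and $\{|x-y|<\gamma(x)\}$ into dyadic annuli around $x$, and using $\gamma(y)\sim\gamma(x)$ for $|x-y|\lesssim\gamma(x)$ together with $2-n/q>0$, one obtains $\mathcal{D}(f)(x)+\mathcal{G}(f)(x)\le C\mathcal{M}(f)(x)$, $\mathcal{M}$ being the Hardy--Littlewood maximal function. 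Hence $R^{\mathcal{L},*}_\ell(f)\le C(R^{*}_\ell(f)+\mathcal{M}(f))$, which settles the case $p_0=\infty$.

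Now let $n/2\le q<n$. The same dyadic-annulus treatment of the $|x-y|^{-1}$-term in \eqref{L3} and of the first summand in \eqref{L4} again produces a contribution bounded by $C\mathcal{M}(f)$. What is left, arising from both $\mathcal{G}$ and $\mathcal{D}$, is the positive operator
$$
\mathcal{V}(f)(x)=\int_{\mathbb{R}^n}\frac{1}{(1+|x-y|/\gamma(y))^{k}}\,\frac{1}{|x-y|^{n-1}}\Bigl(\int_{B(x,|x-y|/4)}\frac{V(z)}{|z-x|^{n-1}}\,dz\Bigr)|f(y)|\,dy ,
$$
with $k$ at our disposal. In contrast with the regime $q\ge n$, for $q<n$ the inner integral $\int_{B(x,R)}V(z)|z-x|^{1-n}\,dz$ cannot be bounded by a multiple of $R^{1-n}\int_{B(x,R)}V$, and it is precisely this term that forces the restriction $p<p_0$. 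I would estimate $\mathcal{V}$ on $L^p$ by reproducing Shen's treatment of $R^{\mathcal{L}}_\ell$: improve the reverse H\"older exponent via \cite{G}; split $\mathbb{R}^n$ using the critical balls $\{Q_k\}$ of \cite[Proposition~5]{DGMTZ}, on which $\gamma$ is comparable to $\gamma(x_k)$; on each block control the inner integral by H\"older against the $L^q$-average of $V$ and by the mapping properties of the Riesz potential of order one; and sum over $k$ with the bounded-overlap property of $\{Q_k\}$ and the scaling of $r\mapsto r^{2-n}\int_{B(x,r)}V$ relative to $\gamma$. The condition $\tfrac1p>\tfrac1q-\tfrac1n$ appears exactly where the Sobolev-type inequality for the Riesz potential is invoked. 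For the weak-$(1,1)$ bound I would instead carry out a Calder\'on--Zygmund decomposition of $f$ adapted to $\gamma$, handling cubes of side $\lesssim\gamma$ via the H\"ormander-type regularity of $R^{\mathcal{L}}_\ell$ in the local region and larger cubes via the fast decay of the kernel, as in \cite{Sh} and \cite{DGMTZ}.

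The main obstacle is thus the $L^p$- and weak-$(1,1)$-boundedness of the operator $\mathcal{V}$ for $n/2\le q<n$, i.e., transplanting Shen's delicate potential estimates to the maximal setting; all the rest reduces, through routine dyadic decompositions and the covering $\{Q_k\}$, to the Hardy--Littlewood maximal function and the classical maximal Riesz transform.
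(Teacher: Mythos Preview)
Your decomposition coincides with the paper's: $R^{\mathcal{L},*}_\ell(f)\le\tau_1(|f|)+\tau_2(|f|)+\tau_3(f)$ with $\tau_1=\mathcal{D}$, $\tau_2=\mathcal{G}$, and $\tau_3$ controlled by the classical maximal Riesz transform. The difference is in how the positive operators $\tau_1,\tau_2$ are handled when $n/2<q<n$. Instead of attacking your $\mathcal{V}$ directly and then running a separate Calder\'on--Zygmund argument at the endpoint, the paper passes to the \emph{adjoints}. Since $\gamma(x)\sim\gamma(y)$ when $|x-y|<\gamma(x)$, and $|x-y|>\gamma(x)$ forces $|x-y|>\gamma(y)/M$ for some fixed $M>0$, one obtains
\[
\tau_1^*(g)(y)\le\int_{|x-y|<M\gamma(y)}|R^{\mathcal{L}}_\ell(x,y)-R_\ell(x-y)|\,|g(x)|\,dx,\qquad
\tau_2^*(g)(y)\le\int_{|x-y|\ge\gamma(y)/M}|R^{\mathcal{L}}_\ell(x,y)|\,|g(x)|\,dx.
\]
These are exactly the integrals Shen already bounds in \cite[(5.9) and Lemmas~5.7--5.8]{Sh}: the kernel estimates \eqref{L3}--\eqref{L4} carry the factor $\gamma(y)$ and the potential integral $\int_{B(x,|x-y|/4)}V(z)|z-x|^{1-n}dz$ is centered at $x$, which after dualizing becomes the variable of integration. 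One gets $\tau_i^*$ bounded on $L^r$ for every $p_0'<r\le\infty$, hence by duality $\tau_i$ bounded on $L^r$ for every $1\le r<p_0$ --- in particular \emph{strongly} on $L^1$. The weak-$(1,1)$ bound for $R^{\mathcal{L},*}_\ell$ then comes entirely from $\tau_3$, and no Calder\'on--Zygmund decomposition is needed at all. Your direct route is viable in principle, but the duality observation bypasses both the block-by-block estimation of $\mathcal{V}$ and the entire endpoint argument; it also makes your separate treatment of $q\ge n$ unnecessary, since for any given $p$ one may reduce to some $q'\in(n/2,n)$ via $B_q\subset B_{q'}$.
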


\begin{proof}

It is enough to prove the result when $n/2< q<n$. Indeed, according to \cite{G}, if $V\in B_{n/2}$ then $V\in B_{\varepsilon+n/2}$ for some $\varepsilon>0$. Moreover, $B_r\subset B_s$, when $r\ge s$. We split the operator $R_\ell ^{\mathcal{L},*}$, in the spirit of the general procedure described at the beginning of this section, as follows
\begin{eqnarray*}
R_\ell ^{\mathcal{L},*}(f)(x)&\le& \int_{|x-y|<\gamma(x)}|R_\ell ^\mathcal{L}(x,y)-R_\ell (x-y)||f(y)|dy\\
&+&\int_{|x-y|>\gamma(x)}|R_\ell ^\mathcal{L}(x,y)||f(y)|dy+\sup_{\varepsilon>0}\Big|\int_{\varepsilon<|x-y|<\gamma(x)}R_\ell (x-y)f(y)dy\Big|\\
&=&\tau_1(|f|)(x)+\tau_2(|f|)(x)+\tau_3(f)(x),\,\,\,x\in \mathbb{R}^n.
\end{eqnarray*}
It is clear that
\begin{eqnarray*}
\tau_3(f)(x)
&\le&\sup_{\varepsilon>0}\Big|\int_{|x-y|>\varepsilon}R_\ell (x-y)f(y)dy\Big|,\,\,\,x\in \mathbb{R}^n.
\end{eqnarray*}
Then, from well known results we infer that $\tau_3$ is bounded from $L^r(\mathbb{R}^n)$ into itself when $1<r<\infty$, and from $L^1(\mathbb{R}^n)$ into $L^{1,\infty}(\mathbb{R}^n)$.

For a certain $M>1$, $\frac{1}{M}\le\frac{\gamma(x)}{\gamma(y)}\le M$, when $|x-y|\le \gamma(y)$. Moreover, if $|x-y|>\gamma(x)$, $|x-y|>\gamma(y)/M$. Indeed, it is sufficient to observe that
\begin{eqnarray*}
\{y\in \mathbb{R}^n:|x-y|>\gamma(x)\}&\subset& \{y\in
\mathbb{R}^n:|x-y|<\gamma(y)\,\,\,and\,\,\,|x-y|>\gamma(x)\}\\
&\bigcup&\{y\in \mathbb{R}^N:\,|x-y|>\gamma(y)\,\,\,and\,\,\,|x-y|>\gamma(x)\}.
\end{eqnarray*}
We denote by $\tau_j^*$ the adjoint operator of $\tau_j$, $j=1,2$. We have that
\begin{eqnarray*}
\tau_1^*(|g|)(y)&=&\int_{|x-y|<\gamma(x)}|R_\ell ^\mathcal{L}(x,y)-R_\ell (x-y)|g(x)dx\\
&\le& \int_{|x-y|<M\gamma(y)}|R_\ell ^\mathcal{L}(x,y)-R_\ell (x-y)|g(x)dx,\,\,\,y\in \mathbb{R}^n,
\end{eqnarray*}
and
\begin{eqnarray*}
\tau_2^*(|g|)(y)&=&\int_{|x-y|\ge \gamma(x)}|R_\ell^\mathcal{L}(x,y)||g(x)|dx\\
&\le&\int_{|x-y|\ge
\gamma(y)/M}|R_\ell^\mathcal{L}(x,y)||g(x)|dx,\,\,\,y\in
\mathbb{R}^n.
\end{eqnarray*}
According to \cite[(5.9) and the proof of Lemma 5.8]{Sh} and the
arguments in the proof of \cite[Lemma 5.7]{Sh} we obtain that 
$\tau_i^*$, $i=1,2$, are bounded from $L^r(\mathbb{R}^n)$ into itself
when $p_0'<r\le \infty$. Then, $\tau_i$, $i=1,2$, are bounded from
$L^r(\mathbb{R}^n)$ into itself when $1\le r<p_0$.

By combining the above properties we conclude that $R_\ell ^{\mathcal{L},*}$ is a bounded operator from $L^p(\mathbb{R}^n)$ into itself, provided that $1<p<p_0$, and from $L^1(\mathbb{R}^n)$ to $L^{1,\infty}(\mathbb{R}^n)$.
\end{proof}

\vspace{5mm}

\begin{proof}[\bf Proof of Proposition \ref{Pvriesz}.]

 By Proposition \ref{maxRiesz}, in order to show Proposition \ref{Pvriesz} it is sufficient to see (\ref{repre}) for every $\phi\in C_c^\infty(\mathbb{R}^n)$.

Suppose that $\phi\in C_c^\infty(\mathbb{R}^n)$. By $\Gamma_0(x,y,\tau)$ we denote the fundamental solution of the operator $-\Delta +i \tau$ in $\mathbb R^n$, with $\tau \in \mathbb R$. We are going to see that, for every $\ell =1,\cdots,n$, $\mathcal{L}^{-\frac{1}{2}}\phi - (-\Delta)^{-\frac{1}{2}}\phi$ admits derivative with respect to $x_\ell $, for almost all $\mathbb R^n$, and that ${\rm a.e.}\; x \in \mathbb R^n$,
$$
\frac{\partial}{\partial x_\ell }\left(\mathcal{L}^{-\frac{1}{2}}\phi(x)-(-\Delta)^{-\frac{1}{2}}\phi(x)\right)=-\frac{1}{2\pi}\int_{\mathbb R^n}\phi(y)\int_{-\infty}^{+\infty}\frac{\partial}{\partial x_\ell }\left(\Gamma(x,y,\tau)-\Gamma_0(x,y,\tau)\right)(-i\tau)^{-\frac{1}{2}}d\tau dy.
$$
To simplify the notation we consider $\ell =1$. Also we assume that $q>n/2$ (\cite{G}). According to \cite[Lemma 4.5]{Sh}, for every $k \in \mathbb N$ there exists $C_k >0$ such that
\begin{equation}\label{D12}
\left|\Gamma(x,y,\tau)-\Gamma_0(x,y,\tau)\right| \leq \frac{C_k|x-y|^{2-n}}{(1+|\tau|^{\frac{1}{2}}|x-y|)^k}\left(\frac{|x-y|}{\gamma(x)}\right)^{2-\frac{n}{q}}
\end{equation}
provided that $\tau \in\mathbb R$, $x,y\in \mathbb R^n$, $|x-y| \leq \gamma(x)$. Moreover, by \cite[Theorem 2.7]{Sh}, for every  $k\in \mathbb N$ there exists $C_k >0$ such that
\begin{equation}\label{D13}
\left|\Gamma(x,y,\tau)\right| \leq \frac{C_k|x-y|^{2-n}}{(1+|\tau|^{\frac{1}{2}}|x-y|)^k\left(1+\frac{|x-y|}{\gamma(x)}\right)^k}, \;\;x,y\in \mathbb R^n,\;\;\tau \in \mathbb R ,
\end{equation}
and
\begin{equation}\label{D14}
\left|\Gamma_0(x,y,\tau)\right| \leq \frac{C_k|x-y|^{2-n}}{(1+|\tau|^{\frac{1}{2}}|x-y|)^k}, \;\;x,y\in \mathbb {R}^n,\;\;\tau \in \mathbb{R}.
\end{equation}
We will use repeatedly without reference the following equality
$$
\int_{-\infty}^{+\infty} \frac{1}{|\tau|^{\frac{1}{2}}}\frac{1}{(1+|x-y||\tau|^{\frac{1}{2}})^{k}}d\tau=\frac{C_k}{|x-y|},\,\,\,x,y\in \mathbb{R}^n,\,\,\,x\neq y\,\,\,\rm{and}\,\,\,k\ge 1.
$$
From (\ref{D13}) and (\ref{D14}) we deduce that
\begin{multline}\label{Abs}
\int_{\mathbb{R}^n}\left|\phi(y)\right|\int_{-\infty}^{+\infty} |\Gamma(x,y,\tau)-\Gamma_0(x,y,\tau)||\tau |^{-\frac{1}{2}}d\tau dy\\
\shoveleft{\hspace{33mm}\leq C\int_{\mathbb{R}^n}\frac{|\phi(y)|}{|x-y|^{n-2}}\int_{-\infty}^{+\infty} \frac{1}{|\tau|^{\frac{1}{2}}}\frac{1}{(1+|x-y||\tau|^{\frac{1}{2}})^{2}}d\tau dy}\\
\shoveleft{\hspace{33mm}\leq C \int_{\mathbb{R}^n}\frac{|\phi(y)|}{|x-y|^{n-1}}dy}\\
\leq C \frac{1}{1+|x|^{n-1}},\;\;\; x \in \mathbb{R}^n. \hspace{60mm}
\end{multline}

Then, the function $F=\mathcal{L}^{-\frac{1}{2}}\phi-(-\Delta)^{-\frac{1}{2}}\phi$ defines in $\mathbb R^n$ a distribution $S_F$ by
$$
\langle S_F, \psi \rangle = \int_{\mathbb R^n}F(y)\psi(y)dy,\;\;\; \psi \in C_c^\infty(\mathbb R^n).
$$
We have that, by writing $\overline x=(x_2,\cdots,x_n)$ and considering $ \psi \in C_c^\infty(\mathbb R^n)$,
\begin{multline}\label{DD}
\Big\langle \frac{\partial}{\partial x_1}S_F, \psi \Big\rangle = -\int_{\mathbb R^n} F(x)\frac{\partial}{\partial x_1}\psi(x)dx \\
\shoveleft{\hspace{4mm}=\frac{1}{2\pi}\int_{\mathbb R^n}\int_{\mathbb R^n}\phi(y)\int_{-\infty}^{+\infty}(-i\tau)^{-\frac{1}{2}}\left(\Gamma(x,y,\tau)-\Gamma_0(x,y,\tau)\right)d\tau dy \frac{\partial}{\partial x_1}\psi(x) dx} \\
\shoveleft{\hspace{4mm}=\frac{1}{2\pi}\int_{\mathbb R^{n-1}}\int_{\mathbb R^n}\phi(y)\int_{-\infty}^{+\infty}(-i\tau)^{-\frac{1}{2}}\int_{-\infty}^{+\infty}\left(\Gamma(x,y,\tau)-\Gamma_0(x,y,\tau)\right) \frac{\partial}{\partial x_1}\psi(x)dx_1d\tau dy d\overline x}.
\end{multline}
We now apply partial integration in the following way. Since the integral is absolutely convergent (see (\ref{Abs})) it follows that
\begin{eqnarray}\label{PI1}
\lefteqn{\int_{\mathbb R^{n-1}}\int_{\mathbb R^n}\phi(y)\int_{-\infty}^{+\infty}(-i\tau)^{-\frac{1}{2}}
\int_{-\infty}^{+\infty}\left(\Gamma(x,y,\tau)-\Gamma_0(x,y,\tau)\right) \frac{\partial}{\partial x_1}\psi(x)dx_1d\tau dy d\overline x}\nonumber\\
&=&\lim_{\varepsilon\to 0^+}\int_{\mathbb R^{n-1}}\int_{\mathbb R^n}\phi(y)\int_{-\infty}^{+\infty}(-i\tau)^{-\frac{1}{2}}\nonumber\\
&&\times\Big(\int_{-\infty}^{y_1-\varepsilon}+\int_{y_1+\varepsilon}^{+\infty}\Big)\left(\Gamma(x,y,\tau)-\Gamma_0(x,y,\tau)\right) \frac{\partial}{\partial x_1}\psi(x)dx_1d\tau dy d\overline x\nonumber\\
&=&\lim_{\varepsilon\to 0^+}\Big(-\int_{\mathbb R^{n-1}}\int_{\mathbb R^n}\phi(y)\int_{-\infty}^{+\infty}(-i\tau)^{-\frac{1}{2}}\\
&&\times\Big(\int_{-\infty}^{y_1-\varepsilon}+\int_{y_1+\varepsilon}^{+\infty}\Big)\frac{\partial}{\partial x_1}\left(\Gamma(x,y,\tau)-\Gamma_0(x,y,\tau)\right) \psi(x)dx_1d\tau dy d\overline x\nonumber\\
&&+\int_{\mathbb R^{n-1}}\int_{\mathbb R^n}\phi(y)\int_{-\infty}^{+\infty}(-i\tau)^{-\frac{1}{2}}\Big((\Gamma(y_1-\varepsilon,\overline{x},y,\tau)-\Gamma_0(y_1-\varepsilon,\overline{x},y,\tau))\psi(y_1-\varepsilon,\overline{x})\nonumber\\
&&-(\Gamma(y_1+\varepsilon,\overline{x},y,\tau)-\Gamma_0(y_1+\varepsilon,\overline{x},y,\tau))\psi(y_1+\varepsilon,\overline{x})\Big)d\tau dyd\overline{x}\Big).\nonumber
\end{eqnarray}
We have that
\begin{equation}\label{PI2}
\int_{\mathbb R^{n}}\int_{\mathbb R^n}\int_{-\infty}^{+\infty}|\phi(y)||\psi(x)||\tau|^{-1/2}\Big|\frac{\partial}{\partial x_1}\left(\Gamma(x,y,\tau)-\Gamma_0(x,y,\tau)\right)\Big|d\tau dxdy<\infty.
\end{equation}
Indeed, by \cite[Theorem 2.7]{Sh} (see also the proof of \cite[Lemma 5.7]{Sh}), for every $k\in \mathbb{N}$, there exists $C_k>0$ such that, for every $x,y\in \mathbb{R}^n$ and $\tau\in \mathbb{R}$,
\begin{equation}\label{PI3}
\Big|\frac{\partial}{\partial x_1}\Gamma(x,y,\tau)\Big|\le C_k\frac{|x-y|^{2-n}}{(1+|\tau|^{1/2}|x-y|)^k\Big(1+\frac{|x-y|}{\gamma(y)}\Big)^k}\Big(\int_{|z-x|<\frac{|x-y|}{4}}\frac{V(z)}{|z-x|^{n-1}}dz+\frac{1}{|x-y|}\Big).
\end{equation}
Moreover, for every $k\in \mathbb{N}$, there exists $C_k>0$ such that
\begin{equation}\label{PI4}
\Big|\frac{\partial}{\partial x_1}\Gamma_0(x,y,\tau)\Big|\le C_k\frac{|x-y|^{1-n}}{(1+|\tau|^{1/2}|x-y|)^k},\,\,\,x,y\in \mathbb{R}^n,\,\,\,\tau\in \mathbb{R}.
\end{equation}
By \cite[p. 541]{Sh}, for every $k\in \mathbb{N}$, there exists $C_k>0$ such that
\begin{eqnarray}\label{PI5}
&&\Big|\frac{\partial}{\partial x_1}\Gamma(x,y,\tau)-\frac{\partial}{\partial x_1}\Gamma_0(x,y,\tau)\Big|\le C_k\frac{|x-y|^{2-n}}{(1+|\tau|^{1/2}|x-y|)^k}\nonumber\\
&\times&\Big(\int_{|z-x|<\frac{|x-y|}{4}}\frac{V(z)}{|z-x|^{n-1}}dz+\frac{1}{|x-y|}\Big(\frac{|x-y|}{\gamma(y)}\Big)^{2-n/q}\Big),\,\,\,|x-y|<\gamma(y),\,\,\,\tau\in \mathbb{R},
\end{eqnarray}
where $q>n/2$.

Assume firstly that $V\in B_n$. Then $V\in B_q$, for some $q>n$ (\cite{G}). According to \cite[Lemma 1]{BHS2}, we deduce, from (\ref{PI3}) and (\ref{PI5}), that for every $k\in \mathbb{N}$, there exists $C_k>0$ such that
\begin{equation}\label{PI6}
\Big|\frac{\partial}{\partial x_1}\Gamma(x,y,\tau)\Big|\le C_k\frac{|x-y|^{1-n}}{(1+|\tau|^{1/2}|x-y|)^k\Big(1+\frac{|x-y|}{\gamma(y)}\Big)^k},\,\,\,x,y\in \mathbb{R}^n,\,\,\,\tau\in \mathbb{R},
\end{equation}
and, for every $\tau\in \mathbb{R}$ and $|x-y|<\gamma(y)$,
\begin{equation}\label{PI7}
\Big|\frac{\partial}{\partial x_1}\Gamma(x,y,\tau)-\frac{\partial}{\partial x_1}\Gamma_0(x,y,\tau)\Big|\le C_k\frac{|x-y|^{1-n}}{(1+|\tau|^{1/2}|x-y|)^k}\Big(\frac{|x-y|}{\gamma(y)}\Big)^{2-n/q}.
\end{equation}

According to \cite[Lemma 1.4]{Sh}, $\gamma$ and $1/\gamma$ are
bounded on any compact subset of $\mathbb{R}^n$. Since $\phi$ and
$\psi$ have compact support, there exists $A>0$ such that $|x-y|\le
A\gamma(y)$, $x\in \rm{supp}\psi$ and $y\in \rm{supp}\,\phi$. Then,
by using (\ref{PI4}), (\ref{PI6}) and (\ref{PI7}) we get
\begin{eqnarray*}
\,&\,&\int_{\mathbb R^{n}}\int_{\mathbb R^n}\int_{-\infty}^{+\infty}|\phi(y)||\psi(x)||\tau|^{-1/2}\Big|\frac{\partial}{\partial x_1}\left(\Gamma(x,y,\tau)-\Gamma_0(x,y,\tau)\right)\Big|d\tau dxdy\\
&\le&C\int_{\rm{supp}\,\psi}|\psi(x)|\int_{\rm{supp}\,\phi}|\phi(y)|\int_{-\infty}^\infty \frac{|x-y|^{3-n-n/q}}{|\tau|^{1/2}(1+|\tau|^{1/2}|x-y|)^2}d\tau dydx\\
&\le&C
\int_{\rm{supp}\,\psi}\int_{\rm{supp}\,\phi}\frac{1}{|x-y|^{n-2+n/q}}dydx<\infty.
\end{eqnarray*}

%

We consider now $\frac{n}{2}<q<n$. We recall that the 1-th Euclidean fractional integral is bounded from $L^q(\mathbb{R}^n)$ into $L^{p_0}(\mathbb{R}^n)$, when $\frac{1}{p_0}=\frac{1}{q}-\frac{1}{n}$ (\cite{Stein1}). Since $q>n/2$ in order to establish (\ref{PI2}) we only have to see that
$$
\int_{\mathbb{R}^n}|\phi(y)|\int_{|x-y|\le \gamma(y)}|\psi(x)|\int_{-\infty}^\infty \frac{|x-y|^{2-n}}{|\tau|^{1/2}(1+|\tau|^{1/2}|x-y|)^2}\int_{|z-x|<\frac{|x-y|}{4}}\frac{V(z)}{|z-x|^{n-1}}dz  d\tau dydx<\infty.
$$
To do this we can proceed as follows. There exists $M>0$ for which
\begin{eqnarray*}
\,&\,&\int_{\mathbb{R}^n}|\phi(y)|\int_{|x-y|\le \gamma(y)}|\psi(x)|\int_{-\infty}^\infty \frac{|x-y|^{2-n}}{|\tau|^{1/2}(1+|\tau|^{1/2}|x-y|)^2}\int_{|z-x|<\frac{|x-y|}{4}}\frac{V(z)}{|z-x|^{n-1}}dz  d\tau dydx\\
&\le&C \int_{\rm{supp}\,\psi}\int_{\rm{supp}\,\phi}\frac{1}{|x-y|^{n-1}}\int_{|z|\le M}\frac{|V(z)|}{|z-x|^{n-1}}dzdydx\\
&\le&C \int_{\rm{supp}\,\phi}\Big(\int_{\rm{supp}\,\psi}\frac{1}{|x-y|^{p_0'(n-1)}}dx\Big)^{1/p_0'}\Big(\int_{\mathbb{R}^n}|I_1(\chi_{B(0,M)}V)(x)|^{p_0}dx\Big)^{1/p_0}dy\\
&\le&C
\int_{\rm{supp}\,\phi}\Big(\int_{\rm{supp}\,\psi}\frac{1}{|x-y|^{p_0'(n-1)}}dx\Big)^{1/p_0'}\Big(\int_{B(0,M)}|V(z)|^qdz\Big)^{1/q}dy<\infty,
\end{eqnarray*}
because $V\in L_{\rm{loc}}^q(\mathbb{R}^n)$, and $(n-1)(p_0'-1)<1$ when $q>n/2$.

Hence, (\ref{PI2}) holds and we can write
\begin{eqnarray*}
\lim_{\varepsilon\to 0^+}\int_{\mathbb R^{n-1}}\int_{\mathbb R^n}\phi(y)\int_{-\infty}^{+\infty}(-i\tau)^{-\frac{1}{2}}\Big(\int_{-\infty}^{y_1-\varepsilon}
+\int_{y_1+\varepsilon}^{+\infty}\Big)\frac{\partial}{\partial x_1}\left(\Gamma(x,y,\tau)-\Gamma_0(x,y,\tau)\right) \psi(x)dx_1d\tau dy d\overline x&&\\
&\hspace{-24cm}=&\hspace{-12cm}\int_{\mathbb R^{n}}\psi(x)\int_{\mathbb R^n}\phi(y)\int_{-\infty}^{+\infty}(-i\tau)^{-\frac{1}{2}}\frac{\partial}{\partial x_1}\left(\Gamma(x,y,\tau)-\Gamma_0(x,y,\tau)\right)d\tau dydx.
\end{eqnarray*}
Our next task is to see that $\lim_{\varepsilon\to 0^+}I(\varepsilon)=0$, where
\begin{eqnarray*}
I(\varepsilon)&=&\int_{\mathbb R^{n-1}}\int_{\mathbb R^n}\phi(y)\int_{-\infty}^{+\infty}(-i\tau)^{-\frac{1}{2}}\Big((\Gamma(y_1-\varepsilon,\overline{x},y,\tau)
-\Gamma_0(y_1-\varepsilon,\overline{x},y,\tau))\psi(y_1-\varepsilon,\overline{x})\\
&-&(\Gamma(y_1+\varepsilon,\overline{x},y,\tau)
-\Gamma_0(y_1+\varepsilon,\overline{x},y,\tau))\psi(y_1+\varepsilon,\overline{x})\Big)d\tau dyd\overline{x},\,\,\,\varepsilon>0.
\end{eqnarray*}
We have that
\begin{eqnarray*}
\lefteqn{(\Gamma(y_1-\varepsilon,\overline{x},y,\tau)-\Gamma_0(y_1-\varepsilon,\overline{x},y,\tau))
\psi(y_1-\varepsilon,\overline{x})
-(\Gamma(y_1+\varepsilon,\overline{x},y,\tau)-\Gamma_0(y_1+\varepsilon,\overline{x},y,\tau))
\psi(y_1+\varepsilon,\overline{x})}\\
&=&\hspace{-0.75cm}\Big((\Gamma(y_1-\varepsilon,\overline{x},y,\tau)-\Gamma(y_1+\varepsilon,\overline{x},y,\tau))
-(\Gamma_0(y_1-\varepsilon,\overline{x},y,\tau)-\Gamma_0(y_1+\varepsilon,\overline{x},y,\tau))\Big)
\psi(y_1-\varepsilon,\overline{x})\\
&\hspace{1cm}+&(\psi(y_1-\varepsilon,\overline{x})-\psi(y_1+\varepsilon,\overline{x}))
(\Gamma(y_1+\varepsilon,\overline{x},y,\tau)
-\Gamma_0(y_1+\varepsilon,\overline{x},y,\tau))\\
&=&\hspace{-0.75cm}J_1(\overline{x},y,\varepsilon,\tau)+J_2(\overline{x},y,\varepsilon,\tau),\,\,\,\overline{x}\in \mathbb{R}^{n-1},\,\,y\in \mathbb{R}^n,\,\,\varepsilon>0,\,\,\tau\in \mathbb{R}.
\end{eqnarray*}
According to \cite[p. 535]{Sh}, if $\overline{x}\in \mathbb{R}^{n-1}$, $y\in \mathbb{R}^n$, $0<\varepsilon<|\overline{x}-\overline{y}|/15$ and $\tau\in \mathbb{R}$,
\begin{equation}\label{PI8}
|\Gamma(y_1-\varepsilon,\overline{x},y,\tau)-\Gamma(y_1+\varepsilon,\overline{x},y,\tau)|\le C \frac{\varepsilon^\delta|\overline{x}-\overline{y}|^{2-n-\delta}}{(1+|\tau|^{1/2}|\overline{x}-\overline{y}|)^3},
\end{equation}
and
\begin{equation}\label{PI9}
|\Gamma_0(y_1-\varepsilon,\overline{x},y,\tau)-\Gamma_0(y_1+\varepsilon,\overline{x},y,\tau)|\le C \frac{\varepsilon^\delta|\overline{x}-\overline{y}|^{2-n-\delta}}{(1+|\tau|^{1/2}|\overline{x}-\overline{y}|)^3},
\end{equation}
for a certain $\delta>0$ that depends on $q$.

By using (\ref{D12}), (\ref{D13}) and (\ref{D14}) we can deduce that
$$
\lim_{\eta\to 0^+}\int_{\mathbb{R}^{n}}|\phi(y)|\int_{|\overline{x}-\overline{y}|<\eta}\int_{-\infty}^\infty |\tau|^{-1/2} |J_m(\overline{x},y,\varepsilon,\tau)|d\tau dyd\overline{x}=0,\,\,\,m=1,2,
$$
uniformly in $\varepsilon\in (0,1)$.

Indeed, let $\varepsilon\in (0,1)$. According to \eqref{D13} and \eqref{D14}, the mean value theorem leads to
$$
|J_2(\overline{x},y,\varepsilon, \tau)|\le C\varepsilon \frac{(\varepsilon+|\overline{x}-\overline{y}|)^{2-n}}{(1+|\tau|^{1/2}(\varepsilon+|\overline{x}-\overline{y}|))^3},\,\,\,\overline{x},\,\,\overline{y}\in \mathbb{R}^{n-1}\,\,\,\mbox{and}\,\,\,\tau\in \mathbb{R}.
$$
Then, we have
\begin{eqnarray*}
&&\int_{\mathbb{R}^n}|\phi(y)|\int_{|\overline{x}-\overline{y}|<\eta}\int_{-\infty}^\infty |\tau|^{-1/2}|J_2(\overline{x},y,\varepsilon, \tau)|d\tau d\overline{x}dy\\
&&\hspace{15mm}\le C\int_{\mathbb{R}^n}|\phi(y)|\int_{|\overline{x}-\overline{y}|<\eta}\int_{-\infty}^\infty |\tau|^{-1/2}\varepsilon \frac{(\varepsilon+|\overline{x}-\overline{y}|)^{2-n}}{(1+|\tau|^{1/2}(\varepsilon+|\overline{x}-\overline{y}|))^3}d\tau d\overline{x}dy\\
&&\hspace{15mm}\le C\int_{\mathbb{R}^n}|\phi(y)|\int_{|\overline{x}-\overline{y}|<\eta}\varepsilon(\varepsilon+|\overline{x}-\overline{y}|)^{1-n}d\overline{x}dy\\
&&\hspace{15mm}\le C\int_{\mathbb{R}^n}|\phi(y)|\int_{|\overline{x}-\overline{y}|<\eta}|\overline{x}-\overline{y}|^{2-n}d\overline{x}dy,\,\,\,\eta>0,
\end{eqnarray*}
where $C>0$ does not depend on $\varepsilon$. Hence,
$$
\lim_{\eta\to 0^+}\int_{\mathbb{R}^{n}}|\phi(y)|\int_{|\overline{x}-\overline{y}|<\eta}\int_{-\infty}^\infty |\tau|^{-1/2} |J_2(\overline{x},y,\varepsilon,\tau)|d\tau dyd\overline{x}=0,
$$
uniformly in $\varepsilon\in (0,1)$.

By using \eqref{PI5}, since ${\rm supp}\,\phi$ is compact, we get, for a certain $a>0$, that if $\varepsilon\in (0,1)$
\begin{eqnarray*}
\lefteqn{\int_{\mathbb{R}^n}|\phi(y)|\int_{|\overline{x}-\overline{y}|<\eta}\int_{-\infty}^\infty |\tau|^{-1/2}|J_1(\overline{x},y,\varepsilon, \tau)|d\tau d\overline{x}dy}\\
&&\le C\int_{\mathbb{R}^n}|\phi(y)|\int_{|\overline{x}-\overline{y}|<\eta}\int_{-\infty}^\infty |\tau|^{-1/2}
\Big|\int_{y_1-\varepsilon}^{y_1+\varepsilon}\frac{\partial}{\partial u}\Big(\Gamma(u,\overline{x},y,\tau)-\Gamma_0(u,\overline{x},y,\tau)\Big)du\Big|d\tau d\overline{x}dy\\
&&\le C\int_{\mathbb{R}^n}|\phi(y)|\int_{|\overline{x}-\overline{y}|<\eta}\int_{y_1-\varepsilon}^{y_1+\varepsilon}\int_{-\infty}^\infty |\tau|^{-1/2}
\Big|\frac{\partial}{\partial u}\Big(\Gamma(u,\overline{x},y,\tau)-\Gamma_0(u,\overline{x},y,\tau)\Big)\Big|d\tau dud\overline{x}dy\\
&&\le C\int_{\mathbb{R}^n}|\phi(y)|\int_{|\overline{x}-\overline{y}|<\eta}\int_{-a}^{a}\int_{-\infty}^\infty |\tau|^{-1/2}
\Big|\frac{\partial}{\partial u}\Big(\Gamma(u,\overline{x},y,\tau)-\Gamma_0(u,\overline{x},y,\tau)\Big)\Big|d\tau dud\overline{x}dy,\,\,\,\eta>0.
\end{eqnarray*}

It was showed in \eqref{PI2} that, for every $K$ compact subset of $\mathbb{R}^n$, we have that
$$
\int_K\int_K\int_{-\infty}^\infty |\tau|^{-1/2}
\Big|\frac{\partial}{\partial u}\Big(\Gamma(u,\overline{x},y,\tau)-\Gamma_0(u,\overline{x},y,\tau)\Big)\Big|d\tau dydx<\infty.
$$
Then, it follows that
$$
\lim_{\eta\to 0^+}\int_{\mathbb{R}^n}|\phi(y)|\int_{|\overline{x}-\overline{y}|<\eta}\int_{-a}^{a}\int_{-\infty}^\infty |\tau|^{-1/2}
\Big|\frac{\partial}{\partial u}\Big(\Gamma(u,\overline{x},y,\tau)-\Gamma_0(u,\overline{x},y,\tau)\Big)\Big|d\tau dud\overline{x}dy=0.
$$
Hence, we conclude
$$
\lim_{\eta\to 0^+}\int_{\mathbb{R}^n}|\phi(y)|\int_{|\overline{x}-\overline{y}|<\eta}\int_{-\infty}^\infty |\tau|^{-1/2}|J_1(\overline{x},y,\varepsilon, \tau)|d\tau d\overline{x}dy=0,
$$
uniformly in $\varepsilon\in (0,1)$.

Therefore, in order to achieve our purpose it is sufficient to see that, for every $\eta>0$,
$$
\lim_{\varepsilon\to 0^+}\int_{\mathbb{R}^{n}}\int_{|\overline{x}-\overline{y}|>\eta}|\phi(y)|\int_{-\infty}^\infty |\tau|^{-1/2} |J_m(\overline{x},y,\varepsilon,\tau)|d\tau d\overline{x}dy=0,\,\,\,m=1,2.
$$
Assume that $\eta>0$. By (\ref{PI8}) and (\ref{PI9}), we get, for certain $A_1, A_2>0$,
\begin{eqnarray*}
\int_{\mathbb{R}^{n}}\int_{|\overline{x}-\overline{y}|>\eta}|\phi(y)|\int_{-\infty}^\infty |\tau|^{-1/2} |J_1(\overline{x},y,\varepsilon,\tau)|d\tau d\overline{x}dy&&\\
&\hspace{-10cm}\le& \hspace{-5cm}C\varepsilon^\delta \int_{B(0,A_1)}\int_{\eta<|\overline{x}-\overline{y}|<A_2}\int_{-\infty}^\infty \frac{|\tau|^{-1/2}|\overline{x}-\overline{y}|^{2-n-\delta}}{(1+|\tau|^{1/2}|\overline{x}-\overline{y}|)^3}d\tau d\overline{x}dy\\
&\hspace{-10cm}\le&\hspace{-5cm}C\varepsilon^\delta \int_{B(0,A_1)}\int_{\eta<|\overline{x}-\overline{y}|<A_2}|\overline{x}-\overline{y}|^{1-n-\delta} d\overline{x}dy,\,\,\,0<\varepsilon<\eta/15.
\end{eqnarray*}
Hence
$$
\lim_{\varepsilon\to 0^+}\int_{\mathbb{R}^{n}}\int_{|\overline{x}-\overline{y}|>\eta}|\phi(y)|\int_{-\infty}^\infty |\tau|^{-1/2} |J_1(\overline{x},y,\varepsilon,\tau)|d\tau d\overline{x}dy=0.
$$
Finally, in order to see that
$$
\lim_{\varepsilon\to 0^+}\int_{\mathbb{R}^{n}}\int_{|\overline{x}-\overline{y}|>\eta}|\phi(y)|\int_{-\infty}^\infty |\tau|^{-1/2} |J_2(\overline{x},y,\varepsilon,\tau)|d\tau d\overline{x}dy=0,
$$
we use the mean value theorem, (\ref{D13}) and (\ref{D14}).

Thus we have proved that $lim_{\varepsilon\to 0^+}I(\varepsilon)=0$.

Hence, coming back to (\ref{PI1}) we obtain
\begin{eqnarray*}
\int_{\mathbb R^{n-1}}\int_{\mathbb R^n}\phi(y)\int_{-\infty}^{+\infty}(-i\tau)^{-\frac{1}{2}}\int_{-\infty}^{+\infty}
\left(\Gamma(x,y,\tau)-\Gamma_0(x,y,\tau)\right) \frac{\partial}{\partial x_1}\psi(x)dx_1d\tau dy d\overline x\nonumber\\
&\hspace{-20cm}=&\hspace{-10cm}-\int_{\mathbb R^{n}}\int_{\mathbb R^n}\phi(y)\psi(x)\int_{-\infty}^{+\infty}(-i\tau)^{-\frac{1}{2}}\frac{\partial}{\partial x_1}\left(\Gamma(x,y,\tau)-\Gamma_0(x,y,\tau)\right)d\tau dy dx,
\end{eqnarray*}
and from (\ref{DD}) it follows that
$$
\Big\langle \frac{\partial}{\partial x_1}S_F, \psi \Big\rangle =-\frac{1}{2\pi}\int_{\mathbb R^n} \psi(x)\int_{\mathbb R^n}\phi(y) \int_{-\infty}^{+\infty}(-i\tau)^{-\frac{1}{2}}\frac{\partial}{\partial x_1}\left(\Gamma(x,y,\tau)-\Gamma_0(x,y,\tau)\right)d\tau dy dx.
$$

Therefore we have proved that the distributional derivative $\frac{\partial}{\partial x_1}S_F$ of $S_F$ is
$$
\frac{\partial}{\partial x_1}S_F=-\frac{1}{2\pi}\int_{\mathbb R^n}\phi(y) \int_{-\infty}^{+\infty}(-i\tau)^{-\frac{1}{2}}\frac{\partial}{\partial x_1}\left(\Gamma(x,y,\tau)-\Gamma_0(x,y,\tau)\right)d\tau dy.
$$
Moreover, the above argument shows that the right hand side in the last equality defines a locally integrable function in $\mathbb{R}^n$.

Now, by invoking now \cite[\S5, Theorem V, part (2)]{Schw} we can conclude that $\mathcal{L}^{-\frac{1}{2}}\phi - (-\Delta)^{-\frac{1}{2}}\phi$ admits classical derivative with respect to $x_1$ for almost all $x\in \mathbb R^n$, and
$$
\frac{\partial}{\partial x_1}\left(\mathcal{L}^{-\frac{1}{2}}\phi(x) - (-\Delta)^{-\frac{1}{2}}\phi(x)\right) = -\frac{1}{2\pi}\int_{\mathbb R^n}\phi(y)\int_{-\infty}^{+\infty}(-i\tau)^{-\frac{1}{2}}\frac{\partial}{\partial x_1}(\Gamma(x,y,\tau)-\Gamma_0(x,y,\tau))d\tau dy,
$$
a.e. $x \in \mathbb R^n$. Moreover, the last integral is absolutely convergent. Then,
$$
\frac{\partial}{\partial x_1}\left(\mathcal{L}^{-\frac{1}{2}}\phi(x) - (-\Delta)^{-\frac{1}{2}}\phi(x)\right) = -\frac{1}{2\pi}\lim_{\varepsilon \rightarrow 0^+}\int_{|x-y|>\varepsilon}\phi(y)\int_{-\infty}^{+\infty}(-i\tau)^{-\frac{1}{2}}\frac{\partial}{\partial x_1}(\Gamma(x,y,\tau)-\Gamma_0(x,y,\tau))d\tau dy,
$$
a.e. $x \in \mathbb R^n$, and we obtain that
$$
\frac{\partial}{\partial x_1}\mathcal{L}^{-\frac{1}{2}}\phi(x) = \lim_{\varepsilon \rightarrow 0^+}\int_{|x-y|>\varepsilon}\phi(y)R^{\mathcal{L}}_1(x,y)dy, \;\;{\rm a.e.}\;\;x \in \mathbb R^n,
$$
since, as it is well known,
$$
\frac{\partial}{\partial x_1}(-\Delta)^{-\frac{1}{2}}\phi(x) = \lim_{\varepsilon \rightarrow 0^+}\int_{|x-y|>\varepsilon}\phi(y)\int_{-\infty}^{+\infty}(-i\tau)^{-\frac{1}{2}}\frac{\partial}{\partial x_1}\Gamma_0(x,y,\tau)d\tau dy, \;\;{\rm a.e.}\;\;x \in \mathbb R^n.
$$
\end{proof}


\vspace{7mm}

We now prove $L^p$-boundedness properties for the variation
operators associated with the Riesz transforms $R_\ell
^{\mathcal{L}}$.
\begin{proof}[\bf Proof of Theorem \ref{VarRiesz}.] As in the proof of Proposition 4.1 it is enough to assume $n/2<q<n$. We consider the operators
$$
R_{\ell ,{\rm loc}}^{\mathcal{L}}(f)(x)= P.V. \int_{|x-y| < \gamma(x)} R_\ell ^{\mathcal{L}}(x,y) f(y) dy
$$
and
$$
R_{\ell ,{\rm loc}}(f)(x)= P.V.\int_{|x-y| < \gamma(x)} R_\ell (x-y)f(y) dy,
$$
where $f \in L^p(\mathbb R^n)$ for a suitable $1 \leq p < \infty$.

 Suppose that $\{\eta_j\}_{j\in \mathbb{N}}$ is a real decreasing sequence that converges to zero. Following the general procedure we may write
\begin{multline*}
\left(\sum_{j=0}^\infty\left|R_\ell ^{\mathcal{L},\eta_j}(f)(x) - R_\ell ^{\mathcal{L},\eta_{j+1}}(f)(x)\right|^\rho\right)^{\frac{1}{\rho}}\\
\shoveleft{\hspace{10mm}\leq \left(\sum_{j=0}^\infty \left| \int_{\eta_{j+1}<|x-y| <\eta_j,\,|x-y|<\gamma(x)}\left(R_\ell ^{\mathcal{L}}(x,y)-R_\ell (x-y)\right)f(y)dy\right|^\rho\right)^{\frac{1}{\rho}}}\\
\shoveleft{\hspace{13mm}+\left(\sum_{j=0}^\infty \left|\int_{\eta_{j+1} < |x-y| <\eta_j,\,|x-y|>\gamma(x)} R_\ell ^{\mathcal{L}}(x,y) f(y)dy\right|^\rho\right)^{\frac{1}{\rho}}}\\
\shoveleft{\hspace{13mm}+ \left(\sum_{j=0}^\infty \left|\int_{\eta_{j+1} < |x-y| <\eta_j,\,|x-y|<\gamma(x)} R_\ell (x-y) f(y)dy\right|^\rho\right)^{\frac{1}{\rho}}}\\
\shoveleft{\hspace{10mm}\leq \sum_{j=0}^\infty \int_{\eta_{j+1} <|x-y|< \eta_j,\,|x-y|<\gamma(x)}|R_\ell ^{\mathcal{L}}(x,y)-R_\ell (x-y)||f(y)|dy }\\
\shoveleft{\hspace{13mm}+ \sum_{j=0}^\infty \int_{\eta_{j+1}<|x-y|< \eta_j,\,|x-y|>\gamma(x)}|R_\ell ^{\mathcal{L}}(x,y)|f(y)|dy}\\
\shoveleft{\hspace{13mm}+\left(\sum_{j=0}^\infty\left|\int_{\eta_{j+1} <|x-y| <\eta_j,\,|x-y|<\gamma(x)}R_\ell (x-y) f(y) dy \right|^\rho\right)^{\frac{1}{\rho}}} \\
\shoveleft{\hspace{10mm}\leq \int_{|x-y| < \gamma(x)}|R_\ell ^{\mathcal{L}}(x,y)-R_\ell (x-y)||f(y)|dy + \int_{|x-y| > \gamma(x)}|R_\ell ^{\mathcal{L}}(x,y)||f(y)|dy}\\
+\sup_{\{t_j\}_{j\in \mathbb{N}}\searrow 0}\left(\sum_{j=0}^\infty \left|\int_{t_{j+1}< |x-y| <t_j}R_\ell (x-y)f(y)dy\right|^\rho\right)^{\frac{1}{\rho}}.\hspace{42mm}
\end{multline*}
Hence, we get
\begin{eqnarray}\label{For2}
V_\rho(R_\ell ^{\mathcal{L},\varepsilon})(f)(x) & \leq &\int_{|x-y| < \gamma(x)}\left|R_\ell ^{\mathcal{L}}(x,y) - R_\ell (x-y)\right||f(y)|dy\nonumber \\
&&+\int_{|x-y|> \gamma(x)}\left|R_\ell ^{\mathcal{L}}(x,y)\right||f(y)|dy
+ V_\rho(R_\ell ^\varepsilon)(f)(x)\nonumber\\
&=&\tau_1(|f|)(x)+\tau_2(|f|)(x)+V_\rho(R_\ell ^\varepsilon)(f)(x), \;\;\;
x \in \mathbb R^n.
\end{eqnarray}

Note that the operators $\tau_1$ and $\tau_2$ are the same ones that appeared in the proof of Proposition 4.1. Then, as it was proved there, $\tau_1$ and $\tau_2$ are bounded from $L^r(\mathbb{R}^n)$ into itself provided that $1\le r<p_0$.

By (\ref{For2}) and \cite[Theorem A and Corollary 1.4]{CJRW2} we
conclude the desired $L^p$-boundedness properties for $V_\rho(R_\ell
^{\mathcal{L},\varepsilon})$.

\end{proof}

\section{Variation operators associated with commutators $C_b^\mathcal{L}$}

Proposition \ref{PvComm} and Theorem \ref{VarComm} are proved in
this section. Assume that $b\in BMO_\theta(\gamma)$, for some
$\theta\ge 0$. Let us remind that, for every $\ell =1,\cdots,n$, the
commutator operator $C_{b,\ell }^{\mathcal{L}}$ for the Riesz
transform $R_\ell ^\mathcal{L}$  is given by
$$
C_{b,\ell }^{\mathcal{L}}f = b R_\ell ^{\mathcal{L}}f- R_\ell ^{\mathcal{L}}(bf),\;\;\; f \in C_c^\infty(\mathbb R^n).
$$
Some $L^p$-boundedness results for these operators were established in \cite[Theorem 1]{BHS3}.

From Proposition \ref{Pvriesz} we deduce that, for every  $f \in
C_c^\infty(\mathbb R^n)$,
$$
C_{b,\ell }^{\mathcal{L}}(f)(x) = \lim_{\varepsilon \rightarrow 0^+}\int_{|x-y|> \varepsilon} (b(x)-b(y))R_\ell ^{\mathcal{L}}(x,y) f(y) dy, \;\; x \in \mathbb R^n.
$$
In Proposition \ref{PvComm}, that is proved in the following, we extend this property for every $f\in L^p(\mathbb{R}^n)$, $1<p<p_0$, where $\frac{1}{p_0}=\Big(\frac{1}{q}-\frac{1}{n}\Big)_+$ and $V\in B_q$, $q\ge n/2$.

\vspace{5mm}

\begin{proof}[\bf Proof of Proposition \ref{PvComm}.]

It is enough to prove that the maximal operator $C_{b,\ell }^{\mathcal{L},*}$ defined by
$$
C_{b,\ell }^{\mathcal{L},*}(f) (x)=\sup_{\varepsilon >0}\left|\int_{|x-y|>\varepsilon }(b(x)-b(y))R_\ell ^{\mathcal{L}}(x,y)f(y)dy\right|, \quad x\in \mathbb{R}^n,
$$
is bounded from $L^p(\mathbb{R}^n)$ into itself when $V$ and $p$ satisfy the conditions specified in this proposition.

  As in the proof of Theorem \ref{VarRiesz} it suffices to take care of the case $n/2< q< n$. Suppose that $f\in L^p(\mathbb{R}^n)$, where $1<p<p_0$.

Let us consider the local operators
$$
C_{b,\ell}^{\mathcal{L},*,{\rm loc}}(f)(x)=\sup_{\varepsilon
>0}\left|\int_{\varepsilon<|x-y|<\gamma(x) }(b(x)-b(y))R_\ell
^{\mathcal{L}}(x,y)f(y)dy\right|, \quad x\in \mathbb{R}^n,
$$
and
$$
C_{b,\ell}^{*,{\rm loc}}(f)(x)=\sup_{\varepsilon
>0}\left|\int_{\varepsilon<|x-y|<\gamma(x) }(b(x)-b(y))R_\ell
(x-y)f(y)dy\right|, \quad x\in \mathbb{R}^n.
$$

With this notation we have that
\begin{eqnarray}\label{Co0}
C_{b,\ell }^{\mathcal{L},* }(f)(x)&=&C_{b,\ell}^{\mathcal{L},*,{\rm
loc}}(f)(x)-C_{b,\ell }^{*,{\rm loc}}(f)(x)+
C_{b,\ell }^{\mathcal{L}, *}(f)(x)-C_{b,\ell }^{\mathcal{L}, *,{\rm loc}}(f)(x)+C_{b,\ell }^{*,{\rm loc}}(f)(x)\nonumber\\
&\leq&\int_{|x-y|<\gamma(x)}|b(x)-b(y)||R_\ell ^{\mathcal{L}}(x,y)-R_\ell (x-y)||f(y)|dy\nonumber\\
&+&\int_{|x-y|>\gamma(x)}|b(x)-b(y)||R_\ell ^{\mathcal{L}}(x,y)||f(y)|dy+C_{b,\ell }^{*,{\rm loc}}(f)(x)\nonumber\\
&=&T_1(|f|)(x)+T_2(|f|)(x)+C_{b,\ell }^{*,\rm{loc}}(f)(x), \quad
x\in \mathbb{R}^n.
\end{eqnarray}


According to Proposition \ref{master} the operator $C_{b,\ell
}^{*,\rm{loc}}$ is bounded from $L^r(\mathbb{R}^n)$ into itself, for
every $1<r<\infty$.

We now analyze the $L^p$-boundedness properties for the operators $T_1$ and $T_2$ studying the behavior of their adjoints $T_1^*$ and $T_2^*$.

The operator $T _1^*$ adjoint of $T_1$ is defined by
$$
T_1^*(g)(y)=\int_{|x-y |<\gamma(x)}|b(y)-b(x)||R_\ell
^\mathcal{L}(x,y)-R_\ell (x-y)|g(x)dx.
$$
According to (\ref{L4}), since $\gamma(x)\sim\gamma(y)$ when
$|x-y|\le \gamma(x)$ there exists $A>0$ for which
\begin{eqnarray}\label{Co11}
|T_1^*(g)(y)|&\leq&C\left(\int_{|x-y|<A\gamma(y)}\frac{1}{|x-y|^n}\left(\frac{|x-y|}{\gamma(y)}\right)^{2-n/q}|b(y)-b(x)||g(x)|dx\right.\nonumber\\
&+&\int_{|x-y|<A\gamma(y)}\frac{1}{|x-y|^{n-1}}\int_{B(x,\frac{|x-y|}{4})}\frac{V(z)}{|z-x|^{n-1}}dz|b(y)-b(x)||g(x)|dx\Big)
\nonumber\\
&=&C(T_{1,1}^*(|g|)(y)+T_{1,2}^*(|g|)(y)),\quad y\in \mathbb{R}^n.
\end{eqnarray}

We have that
$$
T_{1,1}^*(|g|)(y)\leq C\sum_{j=0}^\infty \frac{2^{-j\delta
}}{(2^{-j}\gamma(y))^n}\int_{2^{-j-1}A\gamma(y) \leq |x-y|\leq
2^{-j}A\gamma(y)}|b(x)-b(y)||g(x)|dx,\quad y\in \mathbb{R}^n,
$$
where $\delta =2-\frac{n}{q}>0$.

Then,
\begin{equation}\label{Co5}
||T_{1,1}^*(|g|)||_{L^{p'}(\mathbb{R}^n)}\leq C\sum_{j=0}^\infty
||T_{1,1,j}^*(f)||_{L^{p'}(\mathbb{R}^n)},
\end{equation}
where, for every $j\in \mathbb{N}$,
$$
T_{1,1,j}^*(g)(y)=\frac{2^{-j\delta}}{(2^{-j}\gamma(y))^n}\int_{|x-y|\leq
2^{-j}A\gamma(y)}|b(x)-b(y)||g(x)|dx,\quad y\in \mathbb{R}^n.
$$

To deal with these operators we consider the covering $\{Q_k\}_{k\in \mathbb{N}}$ as given in Section 2. We know that there exists $C>0$ such that $\gamma(y)\le C\gamma(x_k)$, for every $y\in 2Q_k$. We choose the smaller $L\in \mathbb{N}$ such that $AC+1\le 2^L$. It is not hard to see that we can find $M\in \mathbb{N}$ such that, for every $k,j\in \mathbb{N}$, there exist $N_j\in \mathbb{N}$ and $x_{k,j}^i$, $i=1,...,N_j$, such that, by denoting $Q_{k,j}^i=B(x_{k,j}^i, 2^{-j}\gamma (x_k))$, $i=1,...,N_j$, the following properties hold:

(i) $Q_k\subset \cup_{i=1}^{N_j}Q_{k,j}^i\subset 2Q_k$;

(ii) $\mbox{card }\{l\in \mathbb{N}: 2^LQ_{k,j}^i\cap 2^LQ_{k,j}^l\not=\varnothing \}\leq M$, $i=1,...,N_j$.

Clearly, we have that  $B(y,2^{-j}A\gamma(y))\subset 2^LQ_{k,j}^i=\widetilde{Q_{k,j}^i}$, when $y\in Q_{k,j}^i$, $j,k\in \mathbb{N}$ and $i=1,...,N_j$.

We can write, for every $j\in \mathbb{N}$,
\begin{eqnarray}\label{Co6}
\int_{\mathbb{R}^n}|T_{1,1,j}^*(g)(y)|^{p'}dy&\leq& C\sum_{k=0}^\infty \int_{Q_k}\left(\frac{2^{-j\delta}}{(2^{-j}\gamma(y))^n}\int_{|x-y|\leq 2^{-j}A\gamma(y)}|g(x)||b(x)-b(y)|dx\right)^{p'}dy\nonumber\\
&\leq&C\sum_{k=0}^\infty \sum_{i=1}^{N_j}\int_{Q_{k,j}^i}\left(\frac{2^{-j\delta}}{(2^{-j}\gamma (x_k))^n}\int_{\widetilde{Q_{k,j}^i}}|g(x)||b(x)-b(y)|dx\right)^{p'}dy\nonumber\\
&\leq&C\left(\sum_{k=0}^\infty \sum_{i=1}^{N_j}\frac{2^{-j\delta p'}}{(2^{-j}\gamma (x_k))^{np'}}\int_{Q_{k,j}^i}|b(y)-b_{Q_{k,j}^i}|^{p'}\left(\int_{\widetilde{Q_{k,j}^i}}|g(x)|dx\right)^{p'}dy\right.\nonumber\\
&&+\left.\sum_{k=0}^\infty \sum_{i=1}^{N_j}\frac{2^{-j\delta p'}}{(2^{-j}\gamma (x_k))^{np'}}\int_{Q_{k,j}^i}\left(\int_{\widetilde{Q_{k,j}^i}}|b(x)-b_{Q_{k,j}^i}||g(x)|dx\right)^{p'}dy\right).
\end{eqnarray}

Each summand is estimated separately. For the first one, since
$\gamma(x_{k,j}^i)\sim \gamma(x_k)$, by \cite[Proposition 3]{BHS3},
we have that
\begin{multline}\label{Co7}
\sum_{j=0}^\infty \left(\sum_{k=0}^\infty \sum_{i=1}^{N_j}
\frac{2^{-j\delta p'}}{(2^{-j}\gamma (x_k))^{np'}}\int_{Q_{k,j}^i}|b(y)-b_{Q_{k,j}^i}|^{p'}\left(\int_{\widetilde{Q_{k,j}^i}}|g(x)|dx\right)^{p'}dy\right)^{1/{p'}}\\
\shoveleft{\hspace{10mm}\leq C\sum_{j=0}^\infty 2^{-j\delta}\left(\sum_{k=0}^\infty \sum_{i=1}^{N_j}
\frac{1}{(2^{-j}\gamma (x_k))^{n{p'}}}\int_{Q_{k,j}^i}|b(y)-b_{Q_{k,j}^i}|^{p'}dy\right.}\\
\shoveleft{\hspace{13mm}\left.\times\int_{\widetilde{Q_{k,j}^i}}|g(x)|^{p'}dx(2^{-j}\gamma (x_k))^{np'/p}\right)^{1/p'}}\\
\shoveleft{\hspace{10mm}\leq C\sum_{j=0}^\infty 2^{-j\delta}\left(\sum_{k=0}^\infty \sum_{i=1}^{N_j}
\frac{1}{(2^{-j}\gamma (x_k))^n}\int_{Q_{k,j}^i}|b(y)-b_{Q_{k,j}^i}|^{p'}dy\int_{\widetilde{Q_{k,j}^i}}|g(x)|^{p'}dx\right)^{1/{p'}}}\\
\shoveleft{\hspace{10mm}\leq C||b||_{{\rm BMO}_\theta(\gamma)}\sum_{j=0}^\infty 2^{-j\delta}\left(\sum_{k=0}^\infty \int_{2^{L+1}Q_k}|g(x)|^{p'}dx\right)^{1/{p'}}}\\
\leq C||b||_{{\rm
BMO}_\theta(\gamma)}||g||_{L^{p'}(\mathbb{R}^n)}.\hspace{82mm}
\end{multline}

Also, by using again \cite[Proposition 3]{BHS3},
\begin{multline}\label{Co8}
\sum_{j=0}^\infty \left(\sum_{k=0}^\infty \sum_{i=1}^{N_j}
\frac{2^{-j\delta p'}}{(2^{-j}\gamma (x_k))^{np'}}\int_{Q_{k,j}^i}\left(\int_{\widetilde{Q_{k,j}^i}}|b(x)-b_{Q_{k,j}^i}||g(x)|dx\right)^{p'}dy
\right)^{1/p'}\\
\shoveleft{\leq C\sum_{j=0}^\infty 2^{-j\delta}\left(\sum_{k=0}^\infty \sum_{i=1}^{N_j}
\frac{1}{(2^{-j}\gamma (x_k))^{np'-n}}\left(\int_{Q_{k,j}^i}|b(x)-b_{Q_{k,j}^i}|^{p}dx\right)^{p'/p}
\int_{\widetilde{Q_{k,j}^i}}|g(x)|^{p'}dx\right)^{1/p'}}\\
\shoveleft{\leq C||b||_{{\rm BMO}_\theta(\gamma)} \sum_{j=0}^\infty
2^{-j\delta}\left(\sum_{k=0}^\infty \sum_{i=1}^{N_j}
\int_{\widetilde{Q_{k,j}^i}}|g(x)|^{p'}dx\right)^{1/p'}}\\
\leq C||b||_{{\rm
BMO}_\theta(\gamma)}||g||_{L^{p'}(\mathbb{R}^n)}.\hspace{92mm}
\end{multline}
By combining (\ref{Co5}), (\ref{Co6}), (\ref{Co7}) and (\ref{Co8}), we obtain
\begin{equation}\label{Co9}
||T_{1,1}^*(|g|)||_{L^{p'}(\mathbb{R}^n)}\leq C||b||_{{\rm
BMO}_\theta(\gamma)}||g||_{L^{p'}(\mathbb{R}^n)}.
\end{equation}


 On the other hand, we have
\begin{eqnarray}\label{Co13}
|T _{1,2}^*(g)(y)|&\leq&\sum_{j=0}^\infty \int_{2^{-j-1}A\gamma(y)\leq |x-y|<2^{-j}A\gamma(y)}|g(x)|\frac{|b(y)-b(x)|}{|x-y|^{n-1}}\int_{B(x,\frac{|x-y|}{4})}\frac{V(z)}{|z-x|^{n-1}}dzdx\nonumber\\
&\leq&C\sum_{j=0}^\infty \frac{1}{(2^{-j}\gamma(y))^{n-1}}\int_{|x-y|\leq 2^{-j}A\gamma(y)}\int_{B(y,2^{-j+1}A\gamma(y))}\frac{V(z)}{|z-x|^{n-1}}dz|b(y)-b(x)||g(x)|dx.
\end{eqnarray}

Since $\frac{1}{p_0}=\frac{1}{q}-\frac{1}{n}$, by using \cite[Lemma
1]{BHS2}, we get
\begin{eqnarray*}
||I_1(\chi _{B(y,2^{-j+1}A\gamma(y))}V)||_{L^{p_0}(\mathbb{R}^n)}&=&\left(\int_{\mathbb{R}^n}\left|\int_{B(y,2^{-j+1}A\gamma(y))}\frac{V(z)}{|z-x|^{n-1}}dz\right|^{p_0}dx\right)^{1/p_0}\\
&\hspace{-10cm}\leq&\hspace{-5cm}C\left(\int_{B(y,2^{-j+1}A\gamma(y))}|V(z)|^qdz\right)^{1/q}\\
&\hspace{-10cm}\leq&\hspace{-5cm}C(2^{-j}\gamma(y))^{n(-1+1/q)}\int_{B(y,2^{-j+1}A\gamma(y))}V(z)dz\\
&\hspace{-10cm}\leq&\hspace{-5cm}C(2^{-j}\gamma(y))^{n(-1+1/q)}2^{-j(2-n/q)}(2^{-j}\gamma(y))^{n-2}\\
&\hspace{-10cm}\leq&\hspace{-5cm}C\gamma(y)^{-2+n/q} ,\quad y\in
\mathbb{R}^n.
\end{eqnarray*}
Then, H\"older inequality implies that
\begin{eqnarray*}
|T _{1,2}^*(g)(y)|&\leq&C\sum_{j=0}^\infty \frac{1}{(2^{-j}\gamma(y))^{n-1}}\left(\int_{|x-y|\leq 2^{-j}A\gamma(y)}(|b(y)-b(x)||g(x)|)^{p_0'}dx\right)^{1/p_0'}\\
&&\times||I_1(\chi _{B(y,2^{-j+1}A\gamma(y))}V)||_{L^{p_0}(\mathbb{R}^n)}\\
&&\le C\sum_{j=0}^\infty
\frac{1}{(2^{-j}\gamma(y))^{n-1}}\left(\int_{|x-y|\leq
2^{-j}A\gamma(y)}(|b(x)-b(y)||g(x)|)^{p_0'}dx\right)^{1/p_0'}\gamma(y)^{n/q-2},\quad
y\in \mathbb{R}^n.
\end{eqnarray*}

We can write,
\begin{equation}\label{Co14}
||T _{1,2}^*(g)||_{L^{p'}(\mathbb{R}^n)}\leq C\sum_{j=0}^\infty ||T
_{1,2,j}^*(g)||_{L^{p'}(\mathbb{R}^n)},
\end{equation}
where, for every $j\in \mathbb{N}$,
$$
T_{1,2,j}^*(g)(y)=\frac{\gamma(y)^{n/q-2}}{(2^{-j}\gamma(y))^{n-1}}\left(\int_{|x-y|\leq
2^{-j}A\gamma(y)}(|b(x)-b(y)||g(x)|)^{p_0'}dx\right)^{1/p_0'},\quad
y\in \mathbb{R}^n.
$$
As before we have, for every $j\in \mathbb{N}$,
\begin{eqnarray}\label{Co15}
\int_{\mathbb{R}^n}|T_{1,2,j}^*(g)(y)|^{p'}dy\nonumber\\
&\hspace{-2cm}\leq&\hspace{-1cm}C\left(\sum_{k=0}^\infty \sum_{i=1}^{N_j}\frac{2^{-j\delta p'}}{(2^{-j}\gamma (x_k))^{(n-n/q+1)p'}}\int_{Q_{k,j}^i}|b(y)-b_{Q_{k,j}^i}|^{p'}\left(\int_{\widetilde{Q_{k,j}^i}}|g(x)|^{p_0'}dx
\right)^{p'/p_0'}dy\right.\nonumber\\
&\hspace{-1.5cm}+&\hspace{-0.5cm}\left.\sum_{k=0}^\infty \sum_{i=1}^{N_j}
\frac{2^{-j\delta p'}}{(2^{-j}\gamma(x_k))^{(n-n/q+1)p'}}
\int_{Q_{k,j}^i}\left(\int_{\widetilde{Q_{k,j}^i}}(|b(y)-b_{Q_{k,j}^i}||g(y)|)^{p_0'}dy
\right)^{p'/p_0'}dx\right),
\end{eqnarray}
where $\delta=2-n/q$. Here $Q_k$, $k\in \mathbb{N}$, and $Q_{k,j}^i$, and $\widetilde{Q}_{k,j}^i$ $k,j\in \mathbb{N}$, $i=1,...,N_j$, are the same balls that we considered above.

Also, since $p'> p_0'$, we get
\begin{multline}\label{Co16}
\sum_{j=0}^\infty \left(\sum_{k=0}^\infty \sum_{i=1}^{N_j}
\frac{2^{-j\delta p'}}{(2^{-j}\gamma(x_k))^{(n/q'+1)p'}}\int_{Q_{k,j}^i}|b(y)-b_{Q_{k,j}^i}|^{p'}
\left(\int_{\widetilde{Q_{k,j}^i}}|g(x)|^{p_0'}dx\right)^{p'/p_0'}dy\right)^{1/p'}\\
\shoveleft{\hspace{30mm}\leq C\sum_{j=0}^\infty 2^{-j\delta}
\left(\sum_{k=0}^\infty \sum_{i=1}^{N_j}
\frac{1}{(2^{-j}\gamma (x_k))^{(n/q'+1)p'}}\int_{Q_{k,j}^i}|b(y)-b_{Q_{k,j}^i}|^{p'}
\right.dy}\\
\shoveleft{\hspace{33mm}\times\left.\int_{\widetilde{Q_{k,j}^i}}|g(x)|^{p'}dx(2^{-j}\gamma (x_k))^{\frac{np'}{p_0'}\Big(1-\frac{p_0'}{p'}\Big)}\right)^{1/p'}}\\
\shoveleft{\hspace{30mm}\leq C||b||_{{\rm
BMO}_\theta(\gamma)}\sum_{j=0}^\infty 2^{-j\delta}
\left(\sum_{k=0}^\infty \sum_{i=1}^{N_j}\int_{\widetilde{Q_{k,j}^i}}|g(x)|^{p'}dx\right)^{1/p'}}\\
\leq C||b||_{{\rm
BMO}_\theta(\gamma)}||g||_{L^{p'}(\mathbb{R}^n)},\hspace{61mm}
\end{multline}
and
\begin{multline}\label{Co17}
\sum_{j=0}^\infty \left(\sum_{k=0}^\infty \sum_{i=1}^{N_j}
\frac{2^{-j\delta p'}}{(2^{-j}\gamma (x_k))^{(n/q'+1)p'}}\int_{Q_{k,j}^i}\left(\int_{\widetilde{Q_{k,j}^i}}
(|b(x)-b_{Q_{k,j}^i}||g(x)|)^{p_0'}dx\right)^{p'/p_0'}dy\right)^{1/p'}\\
\shoveleft{\hspace{30mm}\leq C\sum_{j=0}^\infty 2^{-j\delta}
\left(\sum_{k=0}^\infty \sum_{i=1}^{N_j}
\frac{1}{(2^{-j}\gamma (x_k))^{(n/q'+1)p'-n}}\right.}\\
\shoveleft{\hspace{33mm} \left.\left.\times\left (\int_{\widetilde{Q_{k,j}^i}}|b(x)-b_{Q_{k,j}^i}|^{p_0'p'/(p'-p_0')}dx\right)^{(p'-p_0')/p_0'}\int_{\widetilde{Q_{k,j}^i}}|g(x)|^{p'}dx\right)^{1/p'}\right.}\\
\shoveleft{\hspace{30mm}\leq C||b||_{{\rm
BMO}_\theta(\gamma)}\sum_{j=0}^\infty 2^{-j\delta}
\left(\sum_{k=0}^\infty \sum_{i=1}^{N_j}\int_{\widetilde{Q}_{k,j}^i}|g(x)|^{p'}dx\right)^{1/p'}}\\
\leq C||b||_{{\rm
BMO}_\theta(\gamma)}||g||_{L^{p'}(\mathbb{R}^n)}.\hspace{61mm}
\end{multline}

From (\ref{Co14}), (\ref{Co15}), (\ref{Co16}) and (\ref{Co17}) we
conclude that
\begin{equation}\label{Co18}
||T _{1,2}^*(g)||_{L^{p'}(\mathbb{R}^n)}\leq C||b||_{{\rm
BMO}_\theta(\gamma)}||g||_{L^{p'} (\mathbb{R}^n)}, \quad g\in
L^{p'}(\mathbb{R}^n).
\end{equation}


By invoking (\ref{Co11}), (\ref{Co9}), and (\ref{Co18})
 it follows that
$$
||T _1^*g||_{L^{p'}(\mathbb{R}^n)}\leq C||b||_{{\rm
BMO}_\theta(\gamma)}||g||_{L^{p'} (\mathbb{R}^n)}, \quad g\in
L^{p'}(\mathbb{R}^n).
$$
Then, $T _1$ is a bounded operator from $L^p(\mathbb{R}^n)$ into
itself.

The operator $T _2^*$ adjoint of $T _2$ is defined by
$$
T_2^*(g)(y)=\int_{|x-y|\ge \gamma(x)}|b(x)-b(y)||R_\ell
^{\mathcal{L}}(x,y)|g(x)dx,\quad y\in \mathbb{R}^n.
$$
According to (\ref{L3}), since for a certain $A>0$, $|x-y|\ge
A\gamma(y)$, when $|x-y|\ge \gamma(x)$, we have that
\begin{eqnarray}\label{Co19}
|T _2^*(g)(y)|&\leq&C_\alpha\left(\int_{|x-y|\ge A\gamma(y)}\frac{1}{|x-y|^n}\frac{1}{\Big(1+\frac{|x-y|}{\gamma(y)}\Big)^\alpha}|b(x)-b(y)||g(x)|dx\right.\nonumber\\
&&+\left.\int_{|x-y|\ge A\gamma(y)}\frac{1}{|x-y|^{n-1}}\frac{|b(x)-b(y)||g(x)|}{\Big(1+\frac{|x-y|}{\gamma(y)}\Big)^\alpha}\int_{B(x,\frac{|x-y|}{4})}\frac{V(z)}{|z-x|^{n-1}}dzdx\right)\nonumber\\
&=&C_\alpha(T_{2,1}^*(|g|)(y)+T _{2,2}^*(|g|)(y)),\quad y\in \mathbb{R}^n .
\end{eqnarray}
Here $\alpha >0$ will be sufficiently large  and it will be fixed
later.

By choosing $\alpha>\theta'+1$, where $\theta'$ is the one appearing in Lemma \ref{add1}, we can prove that
\begin{eqnarray*}
|T_{2,1}^*(|g|)(y)|
&\le&C\sum_{m=0}^\infty \int_{2^mA\gamma(y)<|x-y|\le
2^{m+1}A\gamma(y)}|b(x)-b(y)|\frac{1}{|x-y|^n}\Big(1+\frac{|x-y|}{\gamma(y)}\Big)^{-(\theta'+1)}|g(x)|dx\\
&\le&C\sum_{m=0}^\infty
\frac{1}{2^{m(\theta'+1)}}\frac{1}{(2^m\gamma(y))^n}\int_{|x-y|\le
2^{m+1}A\gamma(y)}|b(x)-b(y)||g(x)|dx\\
&\le& C\sum_{m=0}^\infty
\frac{1}{2^{m(\theta'+1+n)}}T_{b,2^{m+1}A}(|g|)(y),\,\,\,y\in
\mathbb{R}^n,
\end{eqnarray*}
where the operators $T_{b,M}$ are the ones introduced in Lemma \ref{add1}.

Then, by using Lemma \ref{add1} we get
\begin{eqnarray}\label{Co20}
\|T_{2,1}^*(|g|)\|_{L^{p'}(\mathbb{R}^n)}&\le&
C\sum_{m=0}^\infty\frac{1}{2^{m(\theta'+1+n)}}\|T_{b,2^{m+1}A}(|g|)\|_{L^{p'}(\mathbb{R}^n)}\nonumber\\
&\le&C\|b\|_{BMO_\theta(\gamma)}\|f\|_{L^{p'}(\mathbb{R}^n)},
\,\,\,f\in L_{p'}(\mathbb{R}^n).
\end{eqnarray}

On the other hand, since $\gamma(y)\sim \gamma(x_k)$, $y\in Q_k$ and $k\in \mathbb{N}$, we have, for a certain $\beta >0$,
\begin{eqnarray}
||T_{2,2}^*g||_{L^{p'}(\mathbb{R}^n)}^{p'} &\leq& C\sum_{k=0}^\infty \int_{Q_k}\left(\int_{|x-y|\ge\beta\gamma(x_k)}\frac{|b(x)-b(y)||g(x)|}{|x-y|^{n-1}}\right.\\
&&\left.\times\frac{1}{\Big(1+\frac{|x-y|}{\gamma(x_k)}\Big)^\alpha}\int_{B(x,\frac{|x-y|}{4})}\frac{V(z)dz}{|z-x|^{n-1}}dx\right)^{p'}dy\nonumber\\
&=& \sum_{k=0}^\infty
\int_{Q_k}J_k(y)^{p'}dy.\nonumber
\end{eqnarray}
 Let $k\in\mathbb{N}$. It follows that
\begin{eqnarray*}
J_k(y)
&\leq&C\sum_{j=0}^\infty \int_{2^j\beta\gamma (x_k)<|x-y|\le 2^{j+1}\beta\gamma (x_k)}\frac{|b(x)-b(y)||g(x)|}{(2^j\gamma (x_k))^{n-1}}\frac{1}{2^{j\alpha}}\int_{B(y,\beta2^{j+2}\gamma (x_k))}\frac{V(z)dz}{|z-x|^{n-1}}dx\\
&\leq&C\sum_{j=0}^\infty \frac{1}{2^{j\alpha }(2^j\gamma (x_k))^{n-1}}\left(\int_{\widetilde{Q}_j^k}(|b(x)-b(y)||g(x)|)^{p_0'}dx\right)^{1/p_0'}\\
&&\times\left(\int_{\mathbb{R}^n}\left(\int_{\widetilde{Q}_j^k}\frac{V(z)dz}{|z-x|^{n-1}}\right)^{p_0}dx\right)^{1/p_0},\,\,\,y\in
Q_k,
\end{eqnarray*}
where $p_0$ is such that $\frac{1}{p_0}=\frac{1}{q}-\frac{1}{n}$, and for every $j\in \mathbb{N}$ we consider $\widetilde{Q}_j^k=B(x_k,c2^j\gamma(x_k))$, where $c>0$ is independent of $j,k\in \mathbb{N}$ and such that $B(y,\beta 2^{j+2}\gamma(x_k))\subset \widetilde{Q}_j^k$, $y\in Q_k$.

Since the 1-th Euclidean fractional integral is bounded from $L^q(\mathbb{R}^n)$ into $L^{p_0}(\mathbb{R}^n)$ we deduce
$$
J_k(y)\leq C\sum_{j=0}^\infty \frac{2^{-j\alpha}}{(2^j\gamma  (x_k))^{n-1}}\left(\int_{\widetilde{Q}_j^k}(|b(x)-b(y)||g(x)|)^{p_0'}dx\right)^{1/p_0'}
\left(\int_{\widetilde{Q}_j^k}V(z)^qdz\right)^{1/q},\,\,\,y\in Q_k.
$$
Moreover, by using the doubling property of $V$ it follows that, for some
$\mu >0$,
\begin{eqnarray*}
\left(\int_{\widetilde{Q}_j^k}V(z)^qdz\right)^{1/q}
&\leq&C(2^j\gamma (x_k))^{-n/q'}2^{j\mu}
\int_{Q_k}V(z)dz\\
&\leq&C(2^j\gamma (x_k))^{-n/q'}2^{j\mu}\gamma (x_k)^{n-2},\quad y\in Q_k,
\end{eqnarray*}
where in the last inequality we just use the definition of $\gamma$.

Then,
$$
J_k(y)\leq C\sum_{j=0}^\infty \frac{2^{j(\mu-\alpha-n+1-n/q)}}{\gamma
(x_k)^{1+n/q'}}\left(\int_{\widetilde{Q}_j^k}(|b(x)-b(y)||g(x)|)^{p_0'}dx\right)^{1/p_0'},\,\,\,y\in Q_k.
$$
Since $p'>p_0'$, calling $\nu=p_0'(p'/p_0')'$, H\"older inequality and \cite[Proposition
3]{BHS3} imply that, for some $\theta'>\theta$,
\begin{eqnarray*}
J_k(y)&\leq&C\sum_{j=0}^\infty \frac{2^{j(\mu-\alpha-n+1-n/q)}}{\gamma
(x_k)^{1+n/q'}}
\left(\int_{\widetilde{Q}_j^k}|b(x)-b(y)|^
{\nu}dx\right)^{\frac{1}{\nu}}\left(\int_{\widetilde{Q}_j^k}|g(x)|^{p'}dx\right)^{1/p'}\\
&\leq&C\sum_{j=0}^\infty \frac{2^{j(\mu-\alpha-n+1-n/q)}}{\gamma
(x_k)^{1+n/q'}}
\left\{\left(\int_{\widetilde{Q}_j^k}|b(x)-b_{Q_k}|^
{\nu}dx\right)^{\frac{1}{\nu}}\right.\\
&&+\left.\left(\int_{\widetilde{Q}_j^k}|b(y)-b_{Q_k}|^
{\nu}dx\right)^{\frac{1}{\nu}}\right\}\left(\int_{\widetilde{Q}_j^k}|g(x)|^{p'}dx\right)^{1/p'}\\
&\leq&C\sum_{j=0}^\infty \gamma(x_k)^{-n/p'}2^{-j(n-2+\alpha-\mu+n/p')}\\
&&\times\Big((j+1)2^{j\theta'}||b||_{{\rm
BMO}_\theta(\gamma)}+|b(y)-b_{Q_k}|\Big)
\left(\int_{\widetilde{Q}_j^k}|g(x)|^{p'}dx\right)^{1/p'},\quad
y\in Q_k.
\end{eqnarray*}

Then, by taking into account the properties of the sequence
$\{Q_k\}_{k\in \mathbb{N}}$ and Minkowski inequality, we can choose
$\alpha$ large enough such that
\begin{eqnarray*}
&&\Big(\sum_{k=0}^\infty \int_{Q_k}J_k(y)^{p'}dy\Big)^{1/p'}\\
&\leq& C||b||_{{\rm BMO}_\theta(\gamma)}\left(\sum_{k=0}^\infty \left(\sum_{j=0}^\infty 2^{-j(n-2+\alpha-\mu+\theta'+\frac{n}{p'})}(j+1) \left(\int_{\widetilde{Q}_j^k}|g(x)|^{p'}dx\right)^{1/p'}\right)^{p'}\right)^{1/p'}\\
&\leq& C||b||_{{\rm BMO}_\theta(\gamma)}\sum_{j=0}^\infty 2^{-j-j(n-2+\alpha-\mu+\theta'+\frac{n}{p'})}(j+1)
\left(\sum_{k=0}^\infty
\int_{\widetilde{Q}_j^k}|g(x)|^{p'}dx\right)^{1/p'}\\
&\leq&
C||b||_{{\rm BMO}_\theta(\gamma)}||g||_{L^{p'}(\mathbb{R}^n)}.
\end{eqnarray*}
Thus, we prove that
\begin{equation}\label{Co21}
||T _{2,2}^*(g)||_{L^{p'}(\mathbb{R}^n)}\leq
C||g||_{L^{p'}(\mathbb{R}^n)}, \quad g\in L^{p'}(\mathbb{R}^n).
\end{equation}
By combining (\ref{Co19}), (\ref{Co20}) and (\ref{Co21}) we obtain that
$$
||T _2^*(g)||_{L^{p'}(\mathbb{R}^n)}\leq
C||g||_{L^{p'}(\mathbb{R}^n)},\quad g\in L^{p'}(\mathbb{R}^n).
$$
Hence, the operator $T _2$ is bounded from $L^p(\mathbb{R}^n)$ into
itself.

Finally, the $L^p$-boundedness of $T _1$ and $T _2$  allows us to
conclude that the operator $C_{b,\ell }^{{\mathcal L},*}$ is bounded
from $L^p(\mathbb{R}^n)$ into itself.
\end{proof}

We now prove the $L^p$-boundedness properties of the variation
operators associated with $\{C_{b,\ell
}^{\mathcal{L},\varepsilon}\}_{\varepsilon>0}$ that are established
in Theorem \ref{VarComm}.

\vspace{5mm}

\begin{proof}[\bf Proof of Theorem \ref{VarComm}.] Assume that $\rho>2$. We
consider the operators
$$
C_{b,\ell }^{\mathcal{L},{\rm loc}}(f)(x)=\lim_{\varepsilon
\rightarrow 0^+}\int_{\varepsilon <|x-y|<\gamma (x)
}(b(x)-b(y))R_\ell ^\mathcal{L}(x,y)f(y)dy,
$$
and
$$
C_{b,\ell}^{ {\rm loc}}(f)(x)=\lim_{\varepsilon \rightarrow
0^+}\int_{\varepsilon <|x-y|<\gamma (x)}(b(x)-b(y))R_\ell
(x-y)f(y)dy,
$$
and we define the truncations $C_{b,\ell
}^{\mathcal{L},\varepsilon,{\rm loc}}(f)$ and $C_{b,\ell
}^{\varepsilon,{\rm loc}}(f)$, $\varepsilon>0$, in the usual way.

If $\{\varepsilon _j\}_{j\in \mathbb{N}}$ is a real decreasing sequence that converges to zero, we can write
\begin{eqnarray*}
\left(\sum_{j=0}^\infty|C_{b,\ell }^{\mathcal{L},\varepsilon _j}(f)(x)-C_{b,\ell }^{\mathcal{L},\varepsilon _{j+1}}(f)(x)|^\rho \right)^{1/\rho }&&\\
&\hspace{-10cm}\leq&\hspace{-5cm}\left(\sum_{j=0}^\infty \left|\int_{\varepsilon _{j+1}<|x-y|<\varepsilon _j, |x-y|<\gamma (x)}(b(x)-b(y))(R_\ell ^\mathcal{L}(x,y)-R_\ell (x-y))f(y)dy\right|^\rho \right)^{1/\rho}\\
&\hspace{-10cm}&\hspace{-5cm}+\left(\sum_{j=0}^\infty \left|\int_{\varepsilon _{j+1}<|x-y|<\varepsilon _j, |x-y|\ge\gamma (x)}(b(x)-b(y))R_\ell ^\mathcal{L}(x,y)f(y)dy\right|^\rho \right)^{1/\rho}\\
&\hspace{-10cm}&\hspace{-5cm}+\left(\sum_{j=0}^\infty \left|\int_{\varepsilon _{j+1}<|x-y|<\varepsilon _j, |x-y|<\gamma (x)}(b(x)-b(y))R_\ell (x-y)f(y)dy\right|^\rho \right)^{1/\rho}\\
&\hspace{-10cm}\leq&\hspace{-5cm}\int_{|x-y|<\gamma (x)}|b(x)-b(y)||R_\ell ^\mathcal{L}(x,y)-R_\ell (x-y)||f(y)|dy\\
&\hspace{-10cm}&\hspace{-5cm}+\int_{|x-y|\ge\gamma (x)}|b(x)-b(y)||R_\ell ^\mathcal{L}(x,y)||f(y)|dy+V_\rho(C_{b,\ell
}^{\varepsilon,{\rm loc}})(f)(x)\\
\end{eqnarray*}
Hence,
$$
V_\rho (C_{b,\ell }^{\mathcal{L},\varepsilon})(f)\leq T_1(f)+T_2(f)+V_\rho
(C_{b,\ell }^{\varepsilon,\rm{loc}})(f),
$$
where the operator $T_1$ and $T_2$ are the ones defined in the proof of Proposition \ref{PvComm}.

According to the $L^p$-boundedness properties of the operators $T_1$
and $T_2$ (see the proof of Proposition \ref{PvComm}) and
Proposition \ref{master}, we conclude that the variation operator
$V_\rho (C_{b,\ell }^{\mathcal{L},\varepsilon })$ is bounded from
$L^p(\mathbb{R}^n)$ into itself.

\end{proof}

\begin{Rm} In \cite{Sh} (see also \cite{BHS1} and \cite{BHS2}) it is considered, for every $\ell =1,\cdots,n$, the adjoint $(R_\ell ^\mathcal{L})^*$ of $R_\ell ^\mathcal{L}$, when $V\in B_q$ with $\frac{n}{2}<q<n$. By proceeding as in the previous results  of this section we can prove the following properties.

Assume that $V\in B_q$, with $\frac{n}{2}<q$, $\ell =1,2,...,n$, and $p_0<p<\infty$, where $\frac{1}{p_0}=\Big(\frac{1}{q}-\frac{1}{n}\Big)_+$. For every $f\in L^p(\mathbb{R}^n)$ there exists the limit
$$
\lim_{\varepsilon \rightarrow 0^+}\int_{|x-y|>\varepsilon}R_\ell ^\mathcal{L}(y,x)f(y)dy,\quad \mbox{ a.e. }x\in \mathbb{R}^n,
$$
and defining the operator $\mathcal{R}_\ell ^\mathcal{L}$ on $L^p(\mathbb{R}^n)$ as
$$
\mathcal{R}_\ell ^\mathcal{L}f(x)=\lim_{\varepsilon \rightarrow 0^+}\int_{|x-y|>\varepsilon}R_\ell ^\mathcal{L}(y,x)f(y)dy,\quad \mbox{ a.e. }x\in \mathbb{R}^n,
$$
$\mathcal{R}_\ell ^\mathcal{L}$ is bounded from $L^p(\mathbb{R}^n)$ into itself.

Moreover, if $\rho >2$ the variation operator $V_\rho
(\mathcal{R}_\ell ^\mathcal{L})$ is bounded from $L^p(\mathbb{R}^n)$
into itself.

Suppose that $b\in BMO_\theta(\gamma)$. For every $f\in
L^p(\mathbb{R}^n)$, there exists the limit
$$
\lim_{\varepsilon \rightarrow 0^+}\int_{|x-y|>\varepsilon}(b(x)-b(y))R_\ell ^\mathcal{L}(y,x)f(y)dy,\quad \mbox{ a.e. }x\in \mathbb{R}^n,
$$
and the operator $\mathcal{C}_{b,\ell }^\mathcal{L}$ defined on $L^p(\mathbb{R}^n)$ by
$$
\mathcal{C}_{b,\ell }^\mathcal{L}(f)(x)=\lim_{\varepsilon \rightarrow 0^+}\int_{|x-y|>\varepsilon}(b(x)-b(y))R_\ell ^\mathcal{L}(y,x)f(y)dy,\quad \mbox{ a.e. }x\in \mathbb{R}^n,
$$
is bounded from  $L^p(\mathbb{R}^n)$ into itself.

Moreover, if $\rho >2$ the variation operator $V_\rho
(\mathcal{C}_{b,\ell }^{\mathcal{L},\varepsilon })$ is bounded from
$L^p(\mathbb{R}^n)$ into itself.
\end{Rm}

\begin{Rm} The fluctuations of a family $\{T_t\}_{t>0}$ of operators
when $t\to 0^+$ also can be analyzed by using oscillation operators
(see, for instance, \cite{Bou} and \cite{JKRW}). If
$\{t_j\}_{j\in \mathbb{N}}$ is a real decreasing sequence that
converges to zero, the oscillation operator $O(T_t;\{t_j\}_{j\in
\mathbb{N}})$ is defined by
$$
O(T_t;\{t_j\}_{j\in \mathbb{N}})(f)(x)=\Big(\sum_{j=0}^\infty \sup_{t_{j+1}\le \varepsilon_{j+1}<\varepsilon_j\le t_j}|T_{\varepsilon_j}f(x)-T_{\varepsilon_{j+1}}f(x)|^2\Big)^{1/2},\quad f\in L^p(\mathbb{R}^n).
$$
$L^p$-boundedness properties for the oscillation operators
associated with the heat semigroup, Riesz transforms and commutators
with the Riesz transforms in the Schr\"odinger setting can be
established by using the procedures developed in this paper.
\end{Rm}

\end{document}